\documentclass[a4paper,11pt]{article}

\usepackage{titlesec}
\usepackage{amsthm}
\usepackage{amssymb}
\usepackage{color}
\usepackage{mathtools}
\usepackage{geometry}
\usepackage{multirow}
\usepackage{multicol}
\usepackage{rotating} 
\usepackage{amsmath}
\usepackage{booktabs} 

\usepackage{algorithm2e}

\usepackage{verbatim}
\numberwithin{equation}{section}

\newtheorem{theorem}{Theorem}[section]

\newtheorem{lemma}[theorem]{Lemma}

\theoremstyle{definition}

\newtheorem{remark}[theorem]{Remark}
\newcommand{\bfs}[1]{{\boldsymbol #1}}
\usepackage{graphicx}
\usepackage{subcaption}

\usepackage{array}
\newcolumntype{C}[1]{>{\centering\arraybackslash}m{#1}}
%
\usepackage{graphics}
\usepackage{amssymb, latexsym, amsmath, amsfonts, epsfig}
\usepackage{bm}
\usepackage{graphicx}
\usepackage{color}
\usepackage{epstopdf}
\usepackage{comment}
\usepackage{soul}
\usepackage[normalem]{ulem}
\setlength{\parindent}{2em}
\usepackage{float}
\usepackage{MnSymbol}

\usepackage{caption}
\captionsetup[table]{format=plain,labelformat=simple,labelsep=colon}
\captionsetup[figure]{labelfont={default},name={Figure},labelsep=colon}

%

\date{\vspace{-6ex}}

\begin{document}

\newcommand{\Question}[1]{{\marginpar{\color{blue}\footnotesize #1}}}
\newcommand{\rev}[1]{{\color{blue}#1}}
\newcommand{\ale}[1]{{\color{magenta} #1}}

\newif \ifNUM \NUMtrue

\title{Generalised Soft Finite Element Method for Elliptic Eigenvalue Problems}
\author{
Jipei Chen\thanks{School of Computing, Australian National University, Canberra, ACT 2601, Australia. E-mail address: Jipei.Chen@anu.edu.au}
\and
Victor M. Calo\thanks{School of Electrical Engineering, Computing and Mathematical Sciences, Curtin University, Perth, WA 6102, Australia. E-mail address: Victor.Calo@curtin.edu.au}\\
\and
Quanling Deng\thanks{Corresponding Author. School of Computing, Australian National University, Canberra, ACT 2601, Australia. E-mail address: Quanling.Deng@anu.edu.au} 
}

\maketitle

\begin{abstract}
The recently proposed soft finite element method (SoftFEM)  reduces the stiffness (condition numbers), consequently improving the overall approximation accuracy. The method subtracts a least-square term that penalizes the gradient jumps across mesh interfaces from the FEM stiffness bilinear form while maintaining the system's coercivity. Herein, we present two generalizations for SoftFEM that aim to improve the approximation accuracy and further reduce the discrete systems' stiffness. Firstly and most naturally, we generalize SoftFEM by adding a least-square term to the mass bilinear form. Superconvergent results of rates $h^6$ and $h^8$ for eigenvalues are established for linear uniform elements; $h^8$ is the highest order of convergence known in the literature. Secondly, we generalize SoftFEM by applying the blended Gaussian-type quadratures. We demonstrate further reductions in stiffness compared to traditional FEM and SoftFEM. The coercivity and analysis of the optimal error convergences follow the work of SoftFEM. Thus, this paper focuses on the numerical study of these generalizations. For linear and uniform elements, analytical eigenpairs, exact eigenvalue errors, and superconvergent error analysis are established. Various numerical examples demonstrate the potential of generalized SoftFEMs for spectral approximation, particularly in high-frequency regimes. 

\textbf{Mathematics Subjects Classification}: 65N30, 65N35, 35J05
\end{abstract}
%

\paragraph*{Keywords}
finite element method (FEM), spectral approximation, eigenvalues, stiffness, condition number, gradient-jump penalty 

\section{Introduction} 
\label{sec:intr}

Galerkin finite element methods have been extensively utilized for spectral approximation of the second-order elliptic eigenvalue problems. 
For foundational work in this field, we refer readers to pioneering studies by Vainikko \cite{vainikko1964asymptotic,vainikko1967speed}, Bramble and Osborn \cite{bramble1973rate}, Strang and Fix \cite{strang1973analysis}, Osborn \cite{osborn1975spectral}, Descloux et al. \cite{descloux1978spectral,descloux1978spectral2}, and Babu\v{s}ka and Osborn \cite{babuvska1991eigenvalue}, along with more recent reviews such as \cite{boffi2010finite,Ern_Guermond_FEs_II_2020}. Alternative approaches like mixed finite element methods \cite{canuto1978eigenvalue, mercier1981eigenvalue, mercier1978eigenvalue}, discontinuous Galerkin methods \cite{antonietti2006discontinuous,giani2015hp}, hybridizable discontinuous Galerkin methods \cite{cockburn2010hybridization, gopalakrishnan2015spectral}, hybrid high-order methods \cite{calo2019spectral,carstensen2021guaranteed}, and virtual element methods \cite{gardini2017virtual} have also been explored for these problems. While all these methods yield optimally convergent approximations, their accuracy primarily holds for the lower part of the spectrum, as the eigenfunctions tend to oscillate more at higher frequencies.
A method to address high-frequency errors is to apply high-order continuous basis functions such as the B-splines or non-uniform rational B-splines (NURBS).
This leads to the development of isogeometric analysis (IGA) \cite{cottrell2006isogeometric} that delivers a more accurate approximation in the upper part of the spectrum (see also \cite{deng2018dispersion,calo2019dispersion,deng2021boundary} for some recent improvements on the subject). 
Additionally, this research has led to advancements on the quadrature blending techniques, as detailed in \cite{CALO2017798, calo2019dispersion, PUZYREV2017421}. The core advantage of this approach is its proficiency in minimizing dispersion errors inherent in numerical integration. 

SoftFEM \cite{DENG2021119} improves the spectral approximation of the Galerkin FEM, with a focus on reducing the stiffness in the discrete spectral problem. Consequently, it also improves accuracy in the higher spectrum range. 
The idea was extended to the IGA setting, and SoftIGA was developed to improve the spectral approximation \cite{deng2023softiga}.

\begin{figure}[h!]
    \centering

    \includegraphics[width=0.45\textwidth]{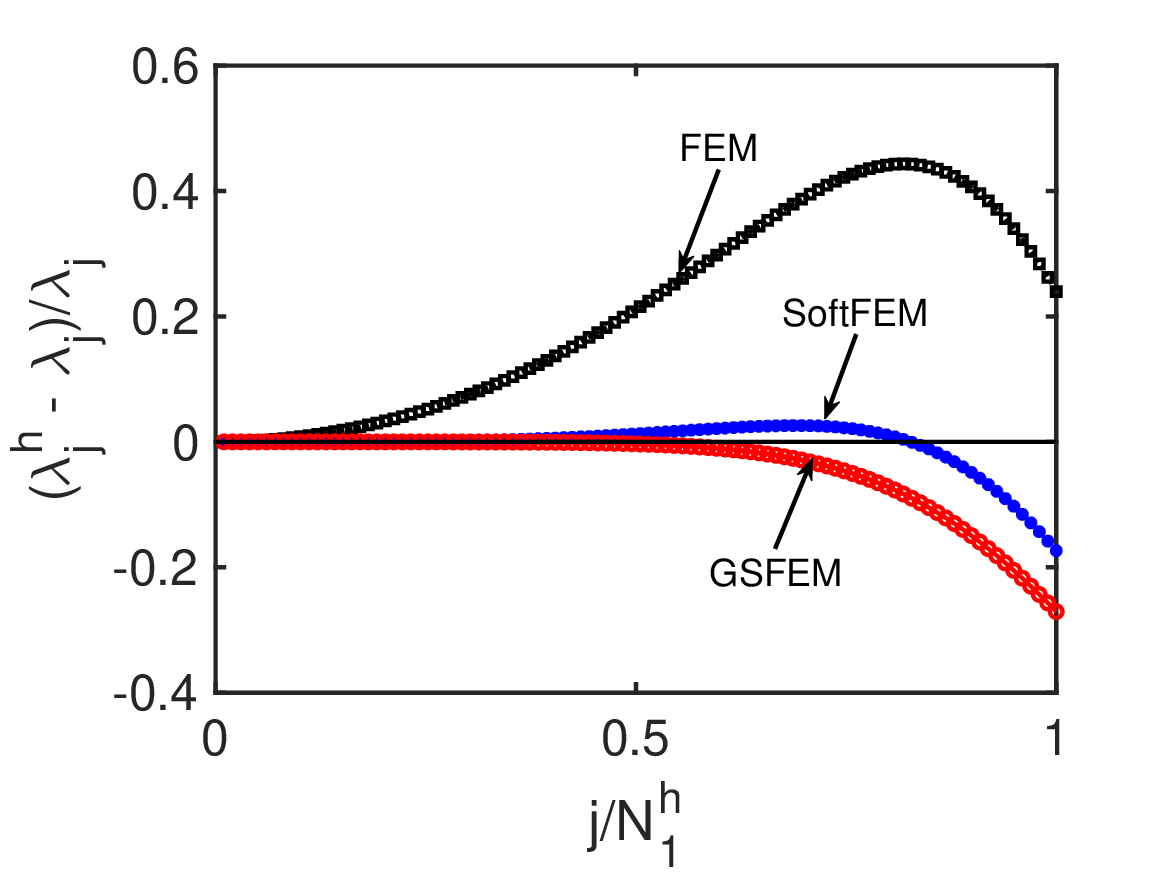}
    \includegraphics[width=0.45\textwidth]{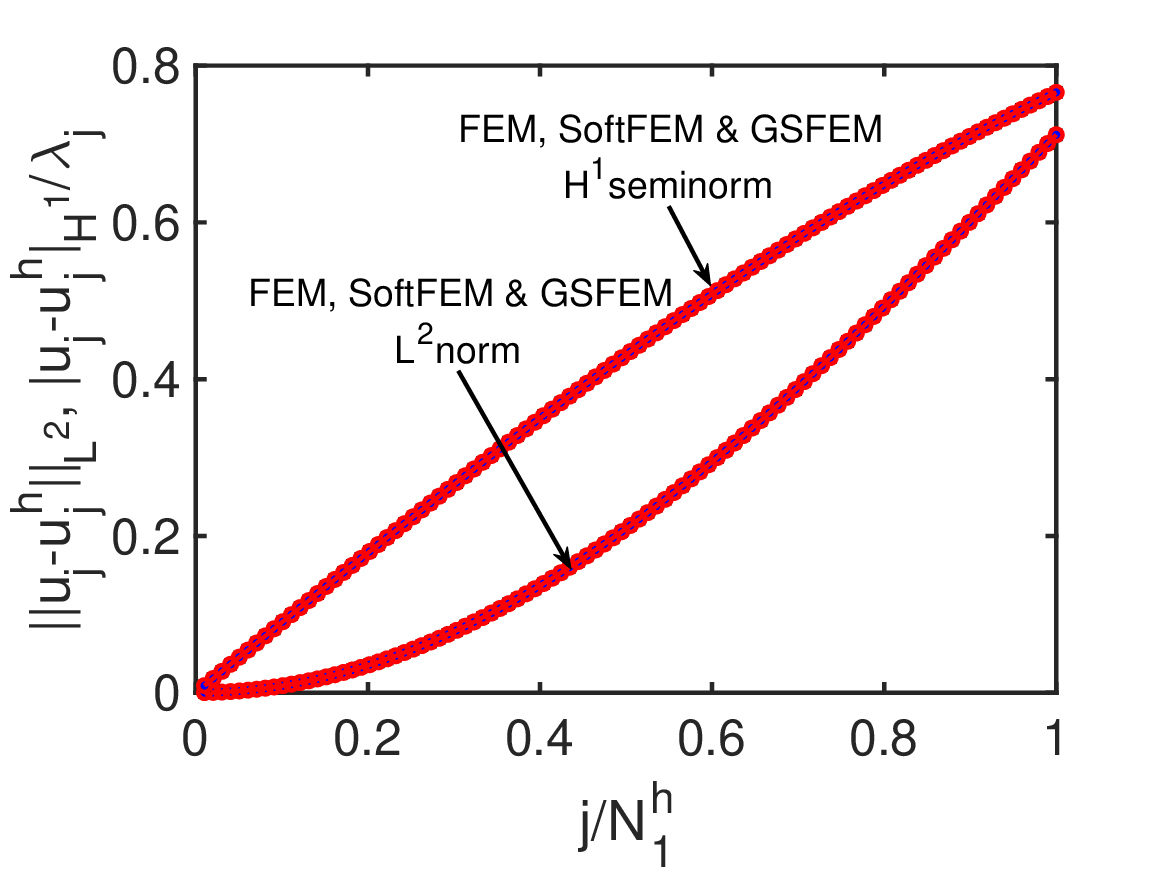}
    \includegraphics[width=0.45\textwidth]{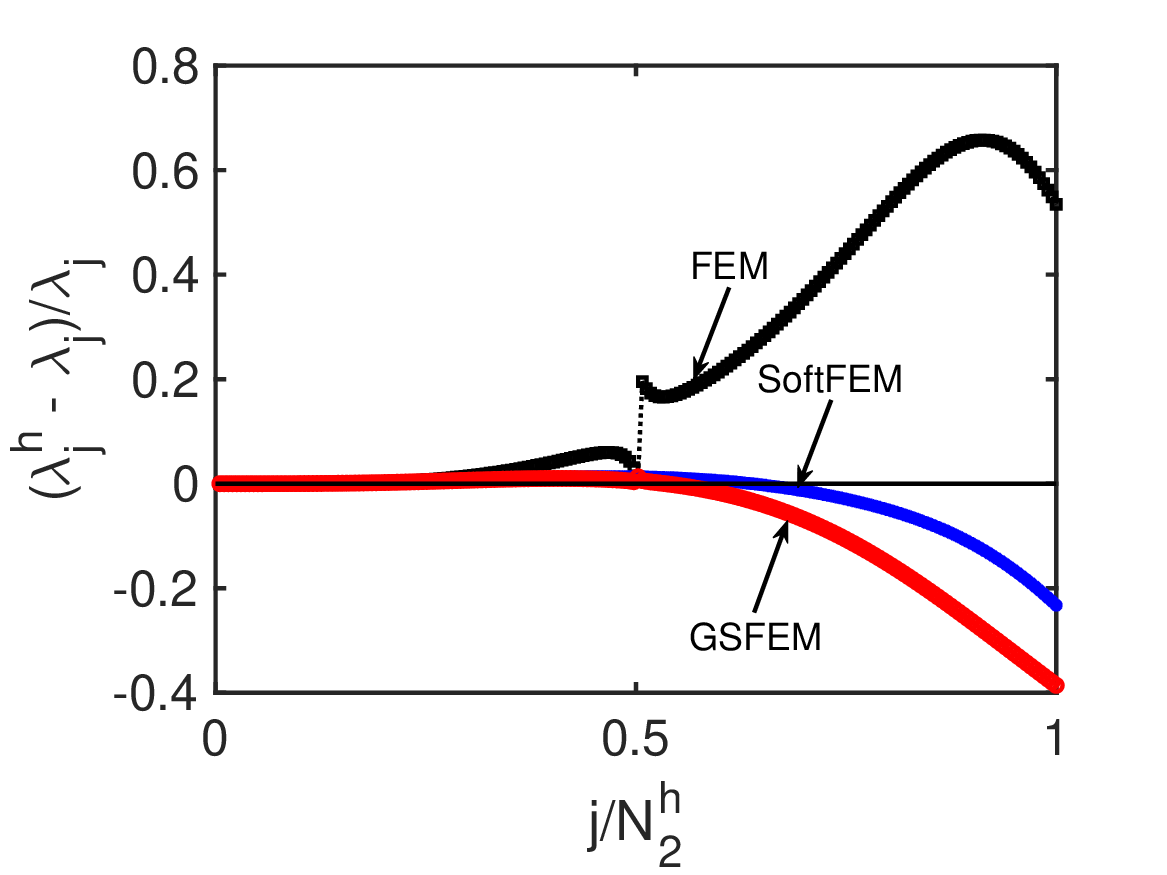}
    \includegraphics[width=0.45\textwidth]{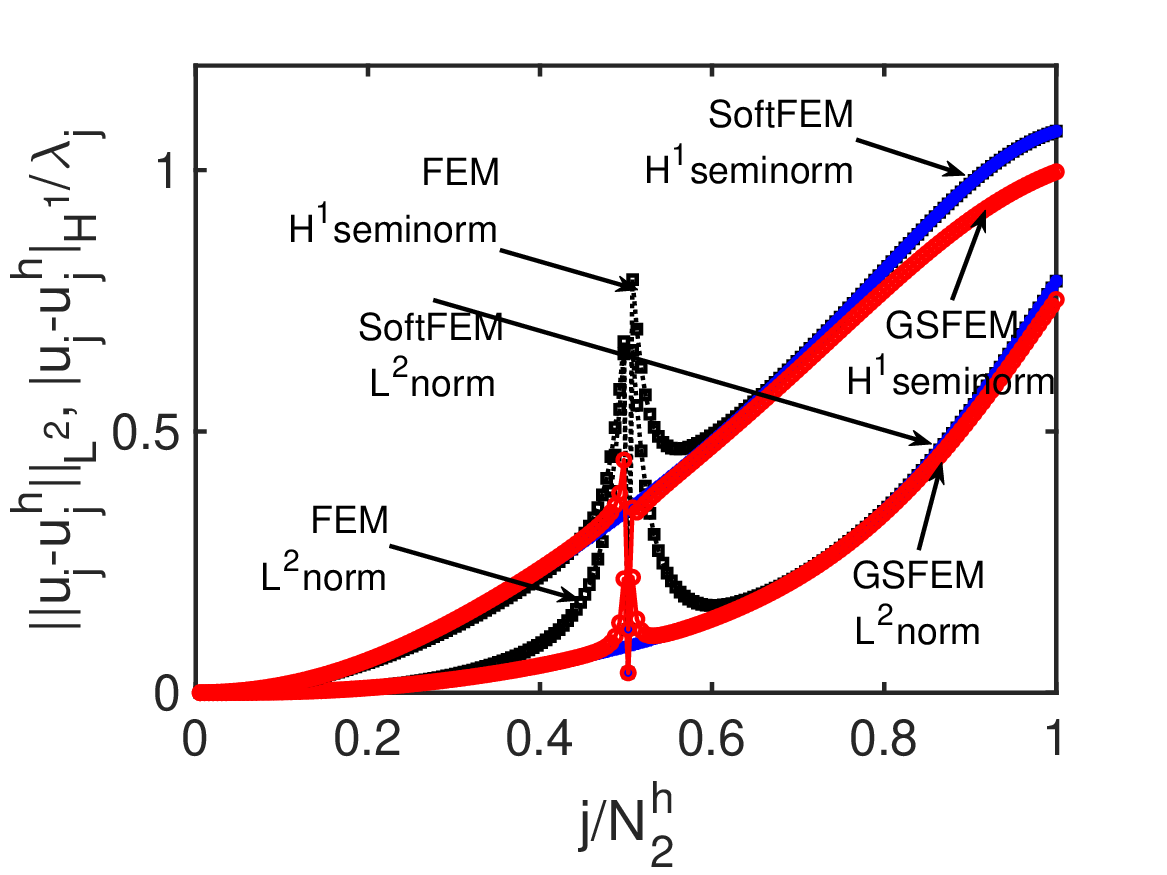}
    \includegraphics[width=0.45\textwidth]{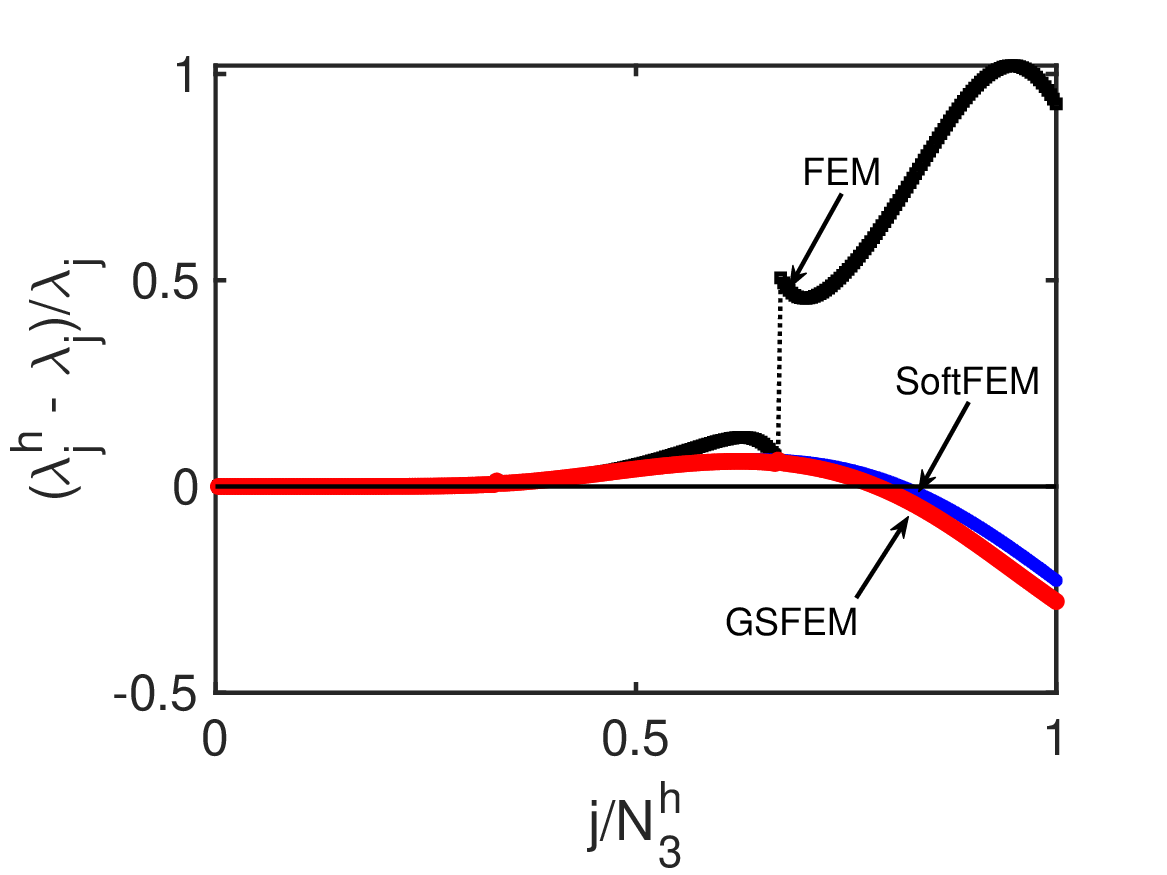}
    \includegraphics[width=0.45\textwidth]{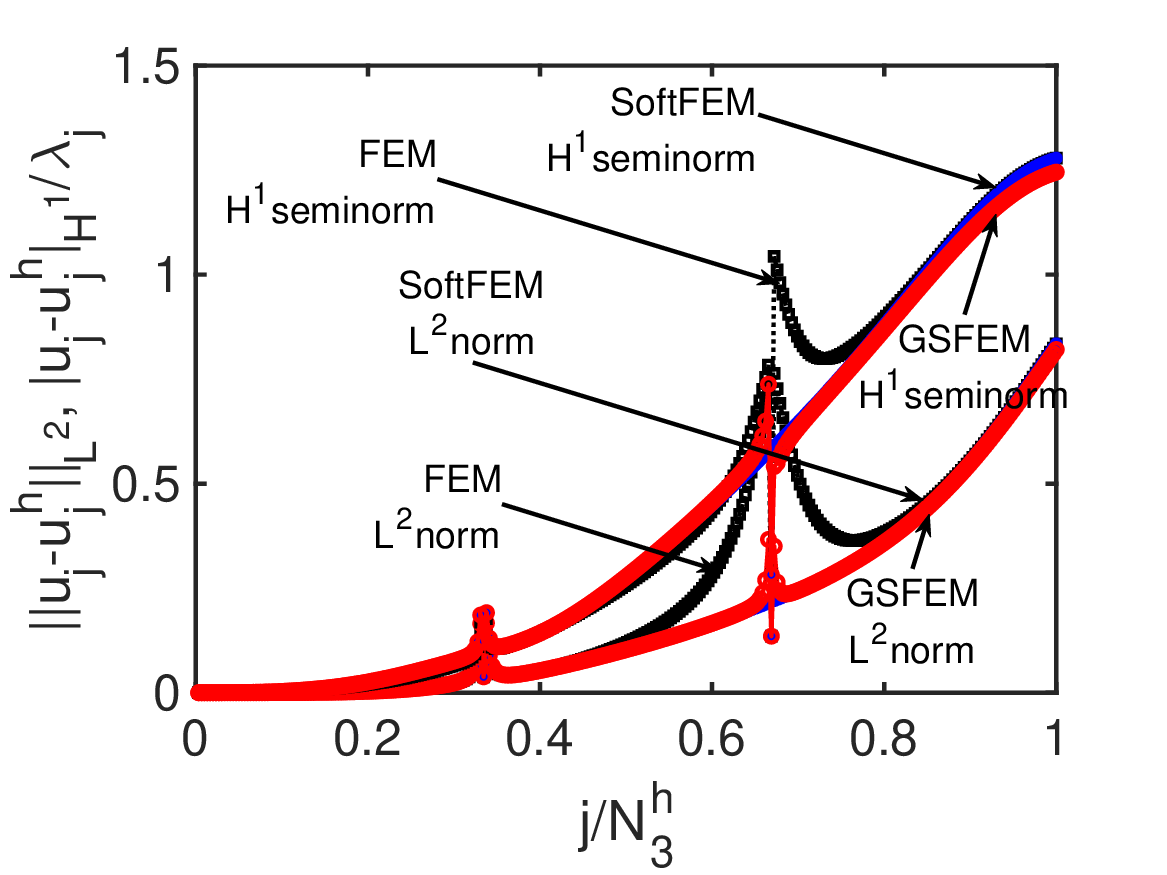}
    \caption{Comparison of Galerkin FEM (black), SoftFEM (blue), and GSFEM (red) for the spectral approximation of the Laplacian eigenvalue problem in 1D using $N^h=100$ elements with polynomial degrees $p\in \{1, 2, 3\}$ (from top to bottom row). }
    \label{fig:GSFEM_comparisionEV&E}
\end{figure}

In this study, we generalize SoftFEM in two ways: adding the auxiliary jump-penalty term to the mass bilinear form and applying blending quadrature rules. Our paramount goal is to reduce the condition numbers further and improve numerical accuracy by minimal additional computation.
SoftFEM reduces the discrete stiffness of the system by subtracting an auxiliary bilinear form derived from a least-squares penalty on the gradient jumps across element interfaces from the standard FEM stiffness. 
Hence, $a(\cdot, \cdot) \to a(\cdot, \cdot) - \eta_K s(\cdot, \cdot)$
where $a(\cdot, \cdot)$ is the standard FEM stiffness bilinear form, $s(\cdot, \cdot)$ is the auxiliary bilinear form and $\eta_K$ is the softness parameter. 
Herein, $s(\cdot, \cdot)$ denotes the bilinear form penalizing gradient discontinuities across element interfaces. 
The first idea to generalize SoftFEM is to apply the auxiliary bilinear form also to the standard FEM mass bilinear form.
In other words, the new mass bilinear form is then
\begin{equation*}
b_{g}(\cdot, \cdot) := b(\cdot, \cdot) + \eta_{M} s(\cdot, \cdot),
\end{equation*}
where $b(\cdot, \cdot)$ denotes the standard Galerkin FEM mass bilinear form and $\eta_{M}$ is a parameter.
Since the auxiliary bilinear form is positive semi-definite and the eigenvalues are approximated as Rayleigh quotients $\frac{a(\cdot, \cdot)}{b(\cdot, \cdot)}$. 
We add to the mass matrix bilinear form (i.e., use ``+" to reduce the system's overall stiffness further). 
We refer to this new method based on FEM as \textit{generalized SoftFEM} (GSFEM).

To provide readers with an initial glimpse of the advantages of GSFEM compared to Galerkin FEM and SoftFEM,
Figure~\ref{fig:GSFEM_comparisionEV&E} shows the comparison of eigenvalue and eigenfunction errors of these three methods 
for the spectral approximation of the 1D Laplace eigenvalue problem on $[0,1]$ with homogeneous Dirichlet boundary conditions.
Therein, we show the case of $p$-th order elements with $p\in\{1,2,3\}$ and $N^h=100$ elements. 
The advantages of GSFEM over softFEM and FEM is clear, particularly in the high-frequency regimes. 
The condition numbers (or stiffness as characterized by the ratio of maximal eigenvalue over minimal eigenvalue) are further reduced.


Another way to generalize SoftFEM is to apply blending quadratures (BQs) for the bilinear inner products. 
Quadrature blending techniques have been widely used to improve spectral approximation. 
For example, the work \cite{Ainsworth10} established optimally blended quadratures for optimally blending the spectral and finite element methods. 
The series of papers \cite{puzyrev2017dispersion, deng2018dispersion, calo2019dispersion} developed optimally-blended quadratures for IGA elements in various settings. 
These works demonstrated the advantageous use of blending quadratures. 
Alternatively, this quadrature-blending technique allows SoftFEM to improve the spectral approximation. 
These blending quadratures further reduce the stiffness of the GSFEM and SoftFEM discretized systems.

The rest of the paper is organized as follows: Section~\ref{sec:introOfSoftFEM} presents the problem statement, the standard Galerkin FEM, and the recently-developed SoftFEM. 
Section~\ref{sec:main} is devoted to the description of the main idea of our work. We introduce two lines of ideas for generalizing SoftFEM.
Section~\ref{sec:ana} presents analytical results of linear elements for the Laplace eigenvalue problem in 1D. This includes analysis of superconvergent eigenvalue errors. 
Finally, comprehensive numerical experiments focusing on the Dirichlet eigenvalue problems are presented in  Section~\ref{sec:numExperiments}. We also analyse problems in multiple dimensions and problems with inhomogeneous diffusion. In all the examples, optimal convergence rates are demonstrated. The numerical experiments demonstrate the advantages of employing GSFEM with blending quadratures, as compared to the standard FEM and SoftFEM. 
Concluding remarks are presented in Section~\ref{sec:conclusion}.

\section{Problem Statement and SoftFEM}
\label{sec:introOfSoftFEM}

In this section, we introduce the idea of SoftFEM, which serves as a foundation for the new methods we will explore in Section ~\ref{sec:main}. The subsequent subsections start with a problem statement, followed by a description of the standard Galerkin FEM, and then present the main idea of SoftFEM.

\subsection{Problem Statement}
\label{sec:problem}

Consider $\Omega$ as a bounded and open subset of $\mathbb{R}^d$ for $d\ge1$, characterized by a Lipschitz continuous boundary denoted as $\partial \Omega$. 
We adopt conventional notations used for Lebesgue and Sobolev spaces. For any measurable subset $S \subseteq \Omega$, the $L^2$ inner product and norm are represented as $(\cdot, \cdot)_S$ and $\|\cdot\|_S$, respectively. This notation extends to vector-valued fields similarly. Furthermore, for any integer $m \ge 1$, the norms and seminorms in the Sobolev space $H^m$ are denoted by $\|\cdot\|_{H^m(S)}$ and $|\cdot|_{H^m(S)}$, respectively.

We consider the following second-order elliptic eigenvalue problem with homogeneous Dirichlet boundary conditions: 
Find the eigenpair $(\lambda, u) \in \mathbb{R}^+ \times H^1_0(\Omega)$ with $\|u\|_\Omega=1$ so that
\begin{equation} \label{eq:pde}
\begin{alignedat}{2}
- \nabla \cdot (\kappa \nabla u) & = \lambda u &\quad &\text{in $\Omega$}, \\
u & = 0 &\quad &\text{on $\partial \Omega$},
\end{alignedat}
\end{equation}
where $\kappa \in L^\infty(\Omega)$ denotes the diffusion coefficient, uniformly bounded from below away from zero. Here, we define $\kappa_{\min} := \text{ess\,inf}_{x \in \Omega}\kappa(x) > 0$.
When $\kappa = 1$, problem~\eqref{eq:pde} simplifies to the classic Laplace (Dirichlet) eigenvalue problem. 
The variational formulation of problem~\eqref{eq:pde} is written as

\begin{equation} \label{eq:vf}
a(u, w) = \lambda b(u, w), \quad \forall w \in H^1_0(\Omega),
\end{equation}
with the bilinear forms defined as
\begin{equation}
a(v, w) := (\kappa \nabla v, \nabla w)_\Omega,
\qquad
b(v, w) := (v, w)_\Omega.
\end{equation}

The eigenvalue problem described in equation~\eqref{eq:pde} is characterized by a countable series of positive real eigenvalues, $\lambda_j \in \mathbb{R}^+$, as detailed in~\cite[Sec. 9.8]{Brezis:11}. These eigenvalues are ordered in a non-decreasing sequence:
\begin{equation*}
0 < \lambda_1 < \lambda_2 \leq \lambda_3 \leq \cdots.
\end{equation*}
We sort the approximate eigenvalues in ascending order, accounting for their algebraic multiplicity.
A set of $L^2$-orthonormal eigenfunctions is associated with these eigenvalues, denoted as $u_j$. These functions satisfy the condition $(u_j, u_k) = \delta_{jk}$, where $\delta_{jk}$ represents the Kronecker delta. Considering the variational formulation in~\eqref{eq:vf}, these normalized eigenfunctions also have orthogonality in the energy inner product.

\subsection{Galerkin FEM}
\label{sec:GalerkinFEM}
We now introduce Galerkin Finite Element Method (FEM). First, we denote by $(\mathcal{T}_h)_{h>0}$ a partition of the domain $\Omega$. 
Each mesh element in this sequence is represented by $\tau$ with its diameter denoted by $h_\tau$ and an outward unit normal vector $\bfs{n}_\tau$. The maximum diameter among all elements in the mesh is denoted by $h = \max_{\tau \in \mathcal{T}_h} h_\tau$. 
Thus, we consider the tensor-product meshes, and the corresponding Galerkin FEM space $V^h_p$ is defined as:
\begin{equation}
V^h_p := \{ v_h \in C^0(\overline \Omega): v_h |_{\partial \Omega} = 0, \text{ and } \forall \tau \in \mathcal{T}_h, v_h|_\tau \in \mathbb{Q}_p(\tau) \},
\end{equation}
where $\mathbb{Q}_p(\tau)$ is a polynomial space of polynomials of order at most $p$ in each dimension. 
Notably, $V^h_p$ is a subset of $H^1_0(\Omega)$, aligning with established theoretical frameworks in finite element analysis.

The approximation of the eigenvalue problem~\eqref{eq:pde} by the Galerkin FEM entails finding a pair $(\lambda^h, u^h) \in \mathbb{R}^+ \times V^h_p$ with the normalization $\|u^h\|_\Omega = 1$ so that
\begin{equation} \label{eq:vfh}
a(u^h, w^h) = \lambda^h b(u^h, w^h), \quad \forall w^h \in V^h_p.
\end{equation}
To achieve an algebraic realization of equation~\eqref{eq:vfh}, a set of basis functions $\{\phi_j^h\}_{j\in\{1,\cdots, N^h_p\}}$ is selected for the space $V^h_p$, where $N^h_p := \text{dim}(V^h_p)$ typically represents the dimensionality of the approximation space $V^h_p$.

This approach results in the formulation of a generalized matrix eigenvalue problem (GMEVP), represented as:
\begin{equation} \label{eq:mevp}
\mathbf{K} \mathbf{U} = \lambda^h \mathbf{M} \mathbf{U},
\end{equation}
where $\mathbf{K}_{kl}:= a(\phi_l^h, \phi_k^h)$ and $\mathbf{M}_{kl}:= b(\phi_l^h, \phi_k^h)$ for all $k, l \in \{1,\cdots,N^h_p\}$, denote the entries of the stiffness and mass matrices, respectively. In this framework, $\mathbf{U} \in \mathbb{R}^{N^h_p}$ represents the eigenvector that gives the components of $u^h$ as per the chosen basis.

\subsection{SoftFEM}
\label{sec:softFEM}

SoftFEM is characterized by modifying the standard bilinear form $a(\cdot,\cdot)$. This modification involves subtracting a least-squares penalty term, which targets the discontinuities in the normal derivatives across mesh interfaces. To elucidate this concept, we first introduce some notations.

For each element $\tau$ within the mesh $\mathcal{T}_h$, we let $h_\tau^0$ denote the diameter of the element $\tau$. The set of mesh interfaces is denoted as $\mathcal{F}_h^i$. Each interface $F$ in $\mathcal{F}_h^i$ is defined as the intersection of the boundaries of two distinct mesh elements, $\tau_1$ and $\tau_2$, i.e., $F = \partial\tau_1 \cap \partial\tau_2$. For such interfaces, we specify:
\begin{equation}
\label{eq:def_F_based}
h_F := \min(h_{\tau_1}^0, h_{\tau_2}^0), \qquad \kappa_F := \min(\kappa_{\tau_1}, \kappa_{\tau_2}),
\end{equation}
where $\kappa_\tau$ is the essential infimum of $\kappa$ over $\tau$. This implies that $\kappa_F$ is the lower bound of the diffusion coefficient $\kappa$ across the elements sharing the interface $F$.

Furthermore, for a function $v^h \in V_p^h$, the jump in its normal derivative across an interface $F$ is defined as:
\begin{equation}
\lsem \nabla v^h \cdot \bfs{n} \rsem_F := \nabla v^h|_{\tau_1} \cdot \bfs{n}_{\tau_1} + \nabla v^h|_{\tau_2} \cdot \bfs{n}_{\tau_2}.
\end{equation}
To enhance readability, we omit the subscript $F$ where the context is clear.

In the framework of SoftFEM, the objective is to seek an approximation $(\lambda^h_{s}, u^h_{s}) \in \mathbb{R}^+ \times V^h_p$ with the normalization $\|u^h_{s}\|_\Omega = 1$ such that
\begin{equation} 
\label{eq:softFEM} 
a_{s}(u^h_{s}, w^h) = \lambda^h_{s} b(u^h_{s}, w^h), \quad \forall w^h \in V^h_p, 
\end{equation}
where $a_{s}(\cdot, \cdot)$, defined for all $v^h, w^h \in V^h_p$, is a modified bilinear form:
\begin{equation} 
\label{eq:softFEMbf} 
a_{s}(\cdot, \cdot) := a(\cdot, \cdot) - \eta_{K} s(\cdot, \cdot),
\end{equation}
with the least-squares penalty term given by 
\begin{equation} \label{eq:bfs}
s(v^h, w^h) := \sum_{F \in \mathcal{F}_h^i} \kappa_F h_F (\lsem \nabla v^h \cdot \bfs{n} \rsem, \lsem \nabla w^h \cdot \bfs{n} \rsem )_F.
\end{equation}
Here, $\eta_{K} \ge 0$ is a parameter referred to as the \textit{softness parameter}. The concept of \textit{softness} in SoftFEM is rooted in the reduction of system stiffness imparted by the term $- \eta_{K} s(\cdot, \cdot)$. 
The range of $\eta_{K}$ is typically set within $[0, \eta_{K,\max})$, where $\eta_{K,\max}$ depends upon the polynomial degree $p$ and the characteristics of the mesh elements, ensuring the coercivity of the bilinear form $a_{s}(\cdot, \cdot)$. In the scenario where $\eta_{K} = 0$, SoftFEM simplifies to the standard Galerkin FEM.

In parallel with the Galerkin FEM approach, the algebraic formulation of the SoftFEM model, as presented in~\eqref{eq:softFEM}, results in a GMEVP:
\begin{equation} 
\label{eq:npmevp} 
\mathbf{K}_{s} \mathbf{U}_{s} = \lambda^h_{s} \mathbf{M} \mathbf{U}_{s},
\end{equation} 
wherein $\mathbf{K}_{s}$ is defined as $\mathbf{K} - \eta_{K} \mathbf{S}$. Here, $\mathbf{S}_{kl}$, representing the entries of the penalty matrix, is computed as $s(\phi_l^h, \phi_k^h)$. 
$\mathbf{K}$ and $\mathbf{M}$ denote the stiffness and mass matrices respectively, as established in~\eqref{eq:mevp}. The eigenvector $\mathbf{U}_{s}$ encapsulates the coefficients of $u^h_{s}$ relative to the selected basis $\{\phi_j^h\}_{j\in\{1,\cdots,N^h_p\}}$ for the finite element space $V_p^h$.
SoftFEM solutions are in the same solution space as those of Galerkin FEM.

\section{Generalized SoftFEM}
\label{sec:main}
In this section, we introduce two generalizations of SoftFEM to reduce the discrete systems' stiffness further and improve the spectral approximation. 
One is to add the least-squares penalty term to the mass bilinear form while the other is to apply the blending quadratures. 
We close this section by a natural extension of combining these two ideas. 
As the numerical experiments show, this combination further improves the stiffness reduction (see  Section~\ref{sec:numExperiments}).

\subsection{Generalization by Adding Gradient-Jump Penalty in Mass} 
\label{sec:geneSoftFEM}

SoftFEM subtracts a least-squares penalty term on the jumps of the normal derivatives across mesh interfaces to the stiffness bilinear form \(a(\cdot,\cdot)\). 
Following this idea,
a natural way to generalize SoftFEM is to add a similar least-squares penalty term multiplied by a different parameter, denoted as $\eta_{M}$, to the mass bilinear form. 
With this in mind, the generalized SoftFEM (GSFEM) is to find 
an approximated eigenpair $(\lambda^h_{gs}, u^h_{gs}) \in \mathbb{R}^+ \times V^h_p$ with the normalization $\|u^h_{gs}\|_\Omega = 1$ such that
\begin{equation} 
\label{eq:gsFEM} 
a_{s}(u^h_{gs}, w^h) = \lambda^h_{gs} b_{gs}(u^h_{gs}, w^h), \quad \forall w^h \in V^h_p, 
\end{equation}
where $a_{s}(\cdot, \cdot)$ is defined in~\eqref{eq:softFEMbf} and
\begin{equation} 
\label{eq:bilinearOfGeneSoftFEM} 
b_{gs} (v^h,w^h) := b(v^h,w^h)  + \eta_{M} s_g(v^h,w^h), 
\end{equation}
with 
\begin{equation} 
s_g(v^h, w^h) := \sum_{F \in \mathcal{F}_h^i} \kappa_F h_F^3 (\lsem \nabla v^h \cdot \bfs{n} \rsem, \lsem \nabla w^h \cdot \bfs{n} \rsem )_F.
\end{equation}
We note that there is a scale difference between $s_g(\cdot, \cdot)$ defined here and $s(\cdot, \cdot)$ defined in~\eqref{eq:bfs}.
This leads to the GMEVP of the form
\begin{equation} 
\label{eq:GeneSoftFEM} 
\mathbf{K}_{s} \mathbf{U}_{gs} = \lambda^h_{gs} \mathbf{M}_{gs} \mathbf{U}_{gs}, 
\end{equation} 
where the eigenpair to be sought is denoted as $(\lambda^h_{gs}, \mathbf{U}_{gs})$.
The modified matrix \(\mathbf{M}_{gs} = \mathbf{M} + \eta_{M} \mathbf{S}_g\) is computed from the new mass bilinear form $b_{gs}(\cdot, \cdot)$. 
When $\eta_{M} = 0$, GSFEM reduces to SoftFEM.

\begin{remark}[GSFEM]
Mass lumping is sometimes used in engineering problems. GSFEM and SoftFEM result in the same mass matrix after lumping.
\end{remark}

\subsection{Generalization by Blending Quadratures}
\label{sec:softFEMBQ}

We use the blending quadratures to generalize SoftFEM. 
We refer to, for example, \cite{davis2007methods} for the details of Gauss-type quadratures. 
Herein, we consider blending Gauss-Legendre and Gauss-Lobatto quadratures with a weight $\alpha$.
We denote this method of SoftFEM with blending quadrature as SoftFEMBQ.

The mass matrix of SoftFEMBQ, denoted as \(\mathbf{M}_{sq}\), is formulated as a linear combination of Gauss-Legendre and Gauss-Lobatto quadrature-integrated mass matrices. 
For $p$-th order elements, let \(\mathbf{M}_{\text{G}}\) denote the mass matrix with entries integrated by using the $(p+1)$-point Gauss-Legendre quadrature rule
while
let \(\mathbf{M}_{\text{L}}\) denote the mass matrix with entries integrated by using the $(p+1)$-point Gauss-Lobatto quadrature rule.
The modified mass matrix \(\mathbf{M}_{sq}\) is then defined as:
\begin{equation} \label{eq:alphaM}
\mathbf{M}_{sq} = \alpha \mathbf{M}_{\text{G}} + (1-\alpha) \mathbf{M}_{\text{L}},
\end{equation}
where \(\alpha\) is a weight parameter. 
The choice of \(\alpha\) can be optimized to reduce the eigenvalue and eigenfunction errors and the discretized system's condition numbers. 
When $\alpha=1$, SoftFEMBQ reduces to SoftFEM.
In analogy to previous cases, the corresponding GMEVP can be written as
\begin{equation} 
\label{eq:softFEMBQ} 
\mathbf{K}_{s} \mathbf{U}_{sq} = \lambda^h_{sq} \mathbf{M}_{sq} \mathbf{U}_{sq}, 
\end{equation}
The mass matrix herein generally differs from its counterpart as in~\ref{eq:GeneSoftFEM}.

\begin{remark}[SoftFEMBQ]
Both GSFEM in subsection~\ref{sec:geneSoftFEM} and SoftFEMBQ aim to reduce the stiffness by modifying the mass bilinear form. 
They differ in the formulation. 
Both methods produce optimally convergence eigenpairs. 
They are combined as a new method to improve the spectral approximation further. 
\end{remark}

We generalize SoftFEM by combining these ideas and defining a new mass matrix as 
\begin{equation}
\mathbf{M}_{gsq} := \mathbf{M}_{sq} + \eta_{M} \mathbf{S}_g,
\end{equation}
where the soft parameter \(\eta_{M}\) and $\alpha$ play the role of combination scaling. We refer to this method as GSFEMBQ.
The corresponding GMEVP is expressed as
\begin{equation} 
\label{eq:GSFEMBQ} 
\mathbf{K}_{s} \mathbf{U}_{gsq} = \lambda_{gsq}^h \mathbf{M}_{gsq} \mathbf{U}_{gsq}.
\end{equation}

\begin{remark}[Alternative Combination]
These two ways of generalization can be combined differently by introducing a weighting parameter as in~\eqref{eq:alphaM}. 
That is, $\mathbf{M}_{gsq} := \gamma \mathbf{M}_{sq} + (1-\gamma) \mathbf{M}_{gs}$, 
where $0 \le \gamma\le 1$.
We observe that numerically, this way of combination leads to similar improvement of the spectrum as ~\eqref{eq:GSFEMBQ}.
However, this combination introduces one more parameter, which requires optimization. 
With the numerical insights in mind, we thus adopt the combination~\eqref{eq:GSFEMBQ}.
\end{remark}

\section{Analytical Results of Linear Elements 1D} \label{sec:ana}

The goal of this paper is to generalize SoftFEM further to reduce the stiffness of the FEM discretized system. 
The condition number characterizes the system stiffness:
\begin{equation}
\sigma_*:= \frac{\lambda^h_{*,\max}}{\lambda^h_{*,\min}},
\end{equation}
where $*$ is replaced for different methods (\textit{gs} for GSFEM, \textit{sq} for SoftFEMBQ, and \textit{gsq} for GSFEMBQ).  $\lambda^h_{*,\min} $ and $  \lambda^h_{*,\max}$ are the smallest and largest eigenvalues of the corresponding method, respectively. 
When $*$ is omitted, it refers to the case of the standard Galerkin FEM.
Following \cite{DENG2021119},
 the \textit{stiffness reduction ratio}, \textit{asymptotic stiffness reduction ratio},  \textit{stiffness reduction percentage}, and \textit{asymptotic stiffness reduction percentage} of the method $*$ relative to Galerkin FEM are defined, respectively, as 
\begin{equation} \label{eq:sr}
\begin{aligned}
\rho_* & := \frac{\sigma}{\sigma_*} = \frac{\lambda^h_{\max} }{ \lambda^h_{*,\max} } \cdot \frac{ \lambda^h_{*,\min} }{\lambda^h_{\min}}, 
&& \rho_{*,\infty}  := \lim_{h\to 0} \frac{\lambda^h_{\max}}{ \lambda^h_{*,\max}}, \\
\varrho_* & = 100 \frac{\sigma- \sigma_*}{\sigma}\,\% = 100(1-\rho^{-1}_*),
&& \varrho_{*,\infty}  := 100 (1-\rho_{*,\infty}^{-1}).
\end{aligned}
\end{equation}

In this section, we analyze the linear element approximation of problem~\eqref{eq:pde} with $\kappa = 1$ and $\Omega = (0, 1)$ in 1D. 
In such a case, the problem has the exact eigenpairs
$(\lambda_j = j^2 \pi^2, u_j(x) = \sqrt{2} \sin( j\pi x)), j = 1,2, \cdots.$
For analysis, we discretize the interval $\Omega = (0,1)$ into $N^h$ uniform elements with the mesh size $h = 1/N^h.$ 
We derive analytical eigenpairs in terms of the auxiliary parameters and the choices for superconvergences. 

Firstly, on a mesh with $N^h$ uniform elements in 1D, the standard FEM stiffness and mass matrices, as well as the gradient-jump matrix are well-known (see, e.g., \cite{DENG2021119})
\begin{equation} \label{eq:kms1d}
\mathbf{K} = 
\frac{1}{h}
\begin{bmatrix}
2 & -1 &  \\
-1 & 2 & -1 &  \\
   & \ddots & \ddots & \ddots &  \\
  &  & -1 & 2 & -1  \\
&& & -1 & 2  \\
\end{bmatrix}, 
\mathbf{M} = h
\begin{bmatrix}
 \frac{2}{3} & \frac{1}{6} \\[0.2cm]
\frac{1}{6} & \frac{2}{3} & \frac{1}{6} \\[0.2cm]
 & \ddots & \ddots & \ddots &  \\[0.2cm]
&  & \frac{1}{6} & \frac{2}{3} & \frac{1}{6}  \\[0.2cm]
&& & \frac{1}{6} & \frac{2}{3}  \\
\end{bmatrix},
\mathbf{S} = 
\frac{1}{h}
\begin{bmatrix}
5 & -4 & 1 \\
-4 & 6 & -4 & 1  \\
1 & -4 & 6 & -4 & 1 \\
   & \ddots & \ddots & \ddots &  \ddots & \ddots \\
  &  & 1 & -4 & 6 & -4  \\
& & & 1 & -4 & 5 \\
\end{bmatrix}, 
\end{equation}
which are of dimension ${(N^h-1) \times (N^h-1)}$. 
Herein, the basis functions associated with the boundaries are removed so that $V^H_1 \subset H^1_0(\Omega)$.

\subsection{GSFEM} \label{sec:an1}

\begin{lemma}[Analytical eigenvalues and eigenvectors] \label{lem:gsfem1d}
For GSFEM approximation with $N^h$ uniform elements, the GMEVP $\mathbf{K}_{s} \mathbf{U}_{gs} = \lambda^h_{gs} \mathbf{M}_{gs} \mathbf{U}_{gs}$ in~\eqref{eq:GeneSoftFEM} has eigenpairs
$(\lambda_{gs,j}^h, \mathbf{U}_{gs,j})$ for all $j\in\{1,\cdots,N^h-1\}$ with 
\begin{equation} \label{eq:evm1}
\begin{aligned}
\lambda_{gs,j}^h & = \frac{12}{h^2}
\frac{\big(1 - 2\eta_K + 2\eta_K\cos(t_j) \big) \sin^2(t_j/2)}
{2 + 18 \eta_M + (1-24\eta_M) \cos(t_j) + 6 \eta_M \cos(2t_j)}, \\
\mathbf{U}_{gs,j,k} & = c_j \sin(kt_j), \qquad k =1,\cdots, N^h-1,
\end{aligned}
\end{equation}
where $t_j:=j \pi h$, $\mathbf{U}_{gs,j,k}$ is the $k$-th component of the $j$-th eigenvector $\mathbf{U}_{gs,j}$, $c_j> 0$ is some normalization constant.
\end{lemma}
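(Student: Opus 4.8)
The plan is to exhibit the stated vectors $\mathbf{U}_{gs,j}$ as simultaneous eigenvectors of the three symmetric matrices $\mathbf{K}$, $\mathbf{M}$, $\mathbf{S}$ of~\eqref{eq:kms1d} and then read off $\lambda_{gs,j}^h$ as the resulting generalised Rayleigh quotient. First I would note that on a uniform mesh, where $h_F \equiv h$, the definitions of $s(\cdot,\cdot)$ in~\eqref{eq:bfs} and of $s_g(\cdot,\cdot)$ differ only by the factor $h_F^2 = h^2$, so that $\mathbf{S}_g = h^2\mathbf{S}$; hence $\mathbf{K}_s = \mathbf{K} - \eta_K\mathbf{S}$ and $\mathbf{M}_{gs} = \mathbf{M} + \eta_M h^2\mathbf{S}$ are built from the single auxiliary matrix $\mathbf{S}$ together with $\mathbf{K}$ and $\mathbf{M}$. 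Since all three matrices share the discrete-sine eigenbasis, it suffices to diagonalise each of them on $\mathbf{v}_j := (\sin(kt_j))_{k=1}^{N^h-1}$ with $t_j = j\pi h$.

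For the tridiagonal matrices this is the classical discrete sine transform. The second-difference identity $\sin((k-1)t)-2\sin(kt)+\sin((k+1)t) = -4\sin^2(t/2)\sin(kt)$ gives at once $\mathbf{K}\mathbf{v}_j = \tfrac{4}{h}\sin^2(t_j/2)\,\mathbf{v}_j$, and the analogous mass-stencil computation gives $\mathbf{M}\mathbf{v}_j = \tfrac{h}{3}(2+\cos t_j)\,\mathbf{v}_j$; here the boundary rows cause no trouble because the only absent neighbour is $v_0 = \sin 0 = 0$. For the pentadiagonal $\mathbf{S}$ I would apply the same identity twice: with $D_k := v_{k-1}-2v_k+v_{k+1}$ one has $D_k = -4\sin^2(t_j/2)\sin(kt_j)$, and then $(\mathbf{S}\mathbf{v}_j)_k = \tfrac1h(D_{k-1}-2D_k+D_{k+1}) = \tfrac{16}{h}\sin^4(t_j/2)\sin(kt_j)$, so that $\mathbf{S}\mathbf{v}_j = \tfrac{16}{h}\sin^4(t_j/2)\,\mathbf{v}_j$ and $\mathbf{S}_g\mathbf{v}_j = 16h\sin^4(t_j/2)\,\mathbf{v}_j$.

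The step requiring genuine care is the first and last rows of $\mathbf{S}$, whose diagonal entry is $5$ rather than $6$, so that the interior formula for $(\mathbf{S}\mathbf{v}_j)_k$ is missing the ghost term $\tfrac1h D_0$ (respectively $\tfrac1h D_{N^h}$). The clean way to dispatch this is the odd-reflection observation: setting $v_0 = 0$ and extending $\sin(kt_j)$ as an odd function gives $v_{-1} = -v_1$, whence $D_0 = v_{-1}-2v_0+v_1 = 0$ (and symmetrically $D_{N^h}=0$). Thus the boundary rows in fact obey the interior formula with the omitted term already vanishing, so $\mathbf{v}_j$ is a genuine eigenvector of $\mathbf{S}$ throughout; equivalently one may verify the direct identity $5\sin t - 4\sin 2t + \sin 3t = 16\sin^4(t/2)\sin t$. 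I expect this boundary check to be the main obstacle, the interior computation being routine sum-to-product manipulation.

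Finally I would combine the three eigenvalues. Because $\mathbf{v}_j$ is a common eigenvector, it solves $\mathbf{K}_s\mathbf{v}_j = \lambda^h_{gs,j}\mathbf{M}_{gs}\mathbf{v}_j$ with $\lambda^h_{gs,j} = \frac{\tfrac4h\sin^2(t_j/2)\,(1-4\eta_K\sin^2(t_j/2))}{\tfrac{h}{3}(2+\cos t_j) + 16\eta_M h\sin^4(t_j/2)}$, where the denominator is strictly positive for $\eta_M\ge0$. Cancelling the factor $h$ and multiplying through by $3$, then using $1-4\eta_K\sin^2(t_j/2) = 1-2\eta_K+2\eta_K\cos t_j$ in the numerator and $48\sin^4(t_j/2) = 18-24\cos t_j+6\cos 2t_j$ (via $\sin^4(t/2) = \tfrac18(3-4\cos t+\cos 2t)$) in the denominator reproduces exactly the expression~\eqref{eq:evm1}. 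The scalar $c_j>0$ is then fixed by the normalisation $\|u^h_{gs}\|_\Omega = 1$, i.e.\ $\mathbf{U}_{gs,j}^{\mathsf T}\mathbf{M}\,\mathbf{U}_{gs,j}=1$, which is positive-definite and so determines $c_j$ up to sign. Since the $\mathbf{v}_j$ are orthogonal and number $N^h-1 = \dim V_1^h$, they form a complete eigenbasis, so these are all the eigenpairs, completing the proof.
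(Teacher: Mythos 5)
Your proof is correct, and it takes a genuinely different route from the paper. The paper's proof is a two-line appeal to a general structural result: it observes that $\mathbf{K}_{s}$ and $\mathbf{M}_{gs}$ are Toeplitz-plus-Hankel matrices and cites an external theorem that supplies the discrete-sine eigenbasis and the symbol formula for such matrices. You instead give a self-contained elementary verification: you diagonalise $\mathbf{K}$, $\mathbf{M}$ and $\mathbf{S}$ separately on the vectors $(\sin(kt_j))_k$, note $\mathbf{S}_g = h^2\mathbf{S}$ on a uniform mesh, and assemble the generalised Rayleigh quotient. All of your identities check out, including the reduction $1-4\eta_K\sin^2(t/2)=1-2\eta_K+2\eta_K\cos t$, the conversion $48\sin^4(t/2)=18-24\cos t+6\cos 2t$, and the boundary identity $5\sin t-4\sin 2t+\sin 3t=16\sin^4(t/2)\sin t$; the final expression agrees with~\eqref{eq:evm1}, and the completeness argument via orthogonality and dimension count is sound. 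The step you correctly flag as the crux --- the corner rows of $\mathbf{S}$ with diagonal entry $5$ rather than $6$ --- is precisely the Hankel perturbation that the paper's cited theorem handles abstractly: the odd reflection $v_{-1}=-v_1$, $v_0=0$ that kills the ghost term is the same symmetry that makes the sine basis compatible with the Toeplitz-plus-Hankel corner correction. The trade-off is the usual one: the citation is shorter and covers other corner modifications uniformly, while your argument is transparent, checkable without external references, and makes visible exactly where the boundary structure of $\mathbf{S}$ enters.
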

\begin{proof}
The matrices $\mathbf{K}_{s}$ and $\mathbf{M}_{gs}$ have the Toeplitz-plus-Hankel structure. 
The analytical eigenpairs of the GMEVP are obtained as an application of~\cite[Thm.~2.1]{deng2021analytical}.  
\end{proof}

\begin{remark}
    In the case of $\eta_M=0$, GSFEM reduces to SoftFEM, consequently, the result reduces to the analytical eigenpairs Eqn. 3.5 in Lemma 1 in \cite{DENG2021119}. 
    If $\eta_K=\eta_M=0$, then the result reduces to the well-known result for the standard Galerkin FEM; see, for example,~\cite[Sec. 2]{boffi2010finite} and~\cite[Sec. 4]{hughes2008duality}.
\end{remark}

\begin{theorem}[Eigenvalue superconvergence] \label{thm:gsfemp1}
Let $\lambda_j$ be the $j$-th exact eigenvalue of~\eqref{eq:pde} with $\kappa=1$ in $\Omega=[0,1]$ and let $\lambda_{gs,j}^h$ be the $j$-th approximate eigenvalue using linear GSFEM. Assume that $\eta_K = \frac{1}{12}$ and $\eta_M = \frac{1}{360}$. The following holds:
\begin{equation} \label{eq:scm1}
\frac{ | \lambda_{gs,j}^h - \lambda_j|}{\lambda_j}  < \frac{1}{3024} (j \pi h)^6, \qquad \forall j\in\{1,\cdots, N^h-1\}.
\end{equation}
\end{theorem}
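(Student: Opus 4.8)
The plan is to work directly from the closed-form eigenvalues in Lemma~\ref{lem:gsfem1d}, since an exact expression for $\lambda_{gs,j}^h$ is already available. First I would substitute the prescribed values $\eta_K=\tfrac1{12}$ and $\eta_M=\tfrac1{360}$ into~\eqref{eq:evm1} and simplify. Using $\sin^2(t_j/2)=\tfrac12(1-\cos t_j)$, the numerator factor collapses to $1-2\eta_K+2\eta_K\cos t_j=\tfrac16(5+\cos t_j)$, while $\cos 2t_j=2\cos^2 t_j-1$ turns the denominator into a quadratic in $\cos t_j$. Writing $t=t_j=j\pi h$ and recalling $\lambda_j=j^2\pi^2=t^2/h^2$, the relative error becomes a single scalar function
\[
\frac{\lambda_{gs,j}^h}{\lambda_j}=R(t):=\frac{30\,(5-4\cos t-\cos^2 t)}{t^2\,(61+28\cos t+\cos^2 t)},
\]
so that the claim is equivalent to $|R(t)-1|<\tfrac1{3024}\,t^6$ for $t\in(0,\pi)$, which is the admissible range because $t_j=j\pi/N^h<\pi$.

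Next I would pin down the rate and, crucially, the exact leading constant. Expanding the numerator $N(t)$ and denominator $D(t)$ of $R$ in powers of $t$, one finds that they share the same coefficients through order $t^6$ (both equal $90t^2-15t^4+\tfrac32 t^6+\cdots$), so their difference is only $O(t^8)$; a short computation gives $N(t)-D(t)=-\tfrac{5}{336}t^8+O(t^{10})$. Dividing by $D(t)\sim 90t^2$ then yields
\[
R(t)-1=-\tfrac1{6048}\,t^6+O(t^8),
\]
which both confirms the $h^6$ rate and shows that the stated constant $\tfrac1{3024}=\tfrac{2}{6048}$ is exactly twice the leading coefficient. The factor of two is therefore not slack to be discarded but precisely the room needed to absorb the higher-order terms up to $t=\pi$.

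To upgrade the asymptotics to the uniform bound I would first settle the sign. Since the leading term is negative and $q(t):=61+28\cos t+\cos^2 t$ is positive (it is increasing in $\cos t\in[-1,1]$, with minimum $34$ at $t=\pi$), I expect $R(t)<1$ on $(0,\pi)$, equivalently $N(t)-D(t)<0$; this I would verify as part of the argument. Granting it, $|R(t)-1|=1-R(t)=\tfrac{D(t)-N(t)}{D(t)}$, and the theorem reduces to the single trigonometric inequality
\[
3024\,\bigl(D(t)-N(t)\bigr)<t^6\,D(t),\qquad t\in(0,\pi),
\]
i.e.\ $t^2 q(t)\,(3024-t^6)<3024\,N(t)$, where $3024-t^6>0$ since $t^6<\pi^6<3024$.

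The main obstacle is this last uniform estimate, because the margin is genuinely tight. A crude bound such as $D(t)\ge 34t^2$ is useless: it would produce the constant $\tfrac{5}{336\cdot34}=\tfrac1{2285}>\tfrac1{3024}$, since near the origin $D(t)$ behaves like $90t^2$, not $34t^2$, so one must track the compensation between $D(t)-N(t)$ and $D(t)$ across the whole interval. I would handle this by splitting $(0,\pi)$: on a neighbourhood of $0$ I would bound the remainder $R(t)-1+\tfrac1{6048}t^6$ by a Lagrange (or alternating-series) estimate, keeping $|R(t)-1|$ safely below $\tfrac1{3024}t^6$; on the complementary compact subinterval, bounded away from $0$, I would prove the polynomial inequality obtained after the substitution $s=\sin^2(t/2)\in(0,1)$, in which $N=120\,s(3-s)$ and $q=4s^2-60s+90$, using elementary two-sided bounds relating $t^2$ and $s$ (such as $t^2\ge 4s$ together with a matching upper bound from the Taylor estimate for $\sin$) to eliminate the transcendental coupling between $t$ and $\cos t$. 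Checking the endpoint $t\to\pi$, where $R(\pi)=\tfrac{120}{17\pi^2}$ gives $1-R(\pi)\approx0.285<\tfrac{\pi^6}{3024}\approx0.318$, confirms that the inequality is strict but holds with only about a ten-percent margin, which is exactly why the remainder must be controlled carefully rather than bounded loosely.
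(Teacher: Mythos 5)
Your proposal follows essentially the same route as the paper: both start from the closed-form eigenvalues of Lemma~\ref{lem:gsfem1d}, identify the leading error term $-\tfrac{1}{6048}t^6$ by Taylor expansion (your computations, including $N(t)-D(t)=-\tfrac{5}{336}t^8+O(t^{10})$ and the endpoint value $1-R(\pi)=1-\tfrac{120}{17\pi^2}$, are correct), and reduce the claim to a uniform trigonometric inequality on $(0,\pi)$. The only divergence is in the final verification, where the paper asserts that two auxiliary functions $f,g$ vanishing at $0$ are increasing on $(0,\pi)$ while you propose a sign determination together with an interval-splitting and Taylor-remainder argument; these are elementary-calculus finishes of comparable rigor, and neither is carried out in full detail.
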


\begin{proof}
The exact eigenvalues $\lambda_j=j^2\pi^2$.
The approximate eigenvalues $\lambda_{gs,j}^h$ are given in~\eqref{eq:evm1}. 
To motivate the choice of $\eta_K = \frac{1}{12}$ and $\eta_M = \frac{1}{360}$, we observe that 
applying a Taylor expansion to $\lambda_{gs,j}^h$ gives
\begin{equation*}
\frac{ \lambda_{gs,j}^h - \lambda_j}{ \lambda_j} = \frac{1-12\eta_K}{12} t_j^2 + \frac{1-360\eta_M}{360}t_j^4 + \Big( \eta_K\eta_M + \frac{\eta_K}{720} - \frac{\eta_M}{12} -\frac{17}{60480} \Big) t_j^6+ \mathcal{O}(t_j^8),
\end{equation*} 
where $t_j=j \pi h$. Setting the leading order terms to be zeros, i.e.,
\begin{align}
    \frac{1-12\eta_K}{12} & = 0, \\
    \frac{1-360\eta_M}{360} & = 0,
\end{align}
gives the optimal choice $\eta_K = \frac{1}{12}$ and $\eta_M = \frac{1}{360}$ for superconvergence. 
Consequently, the Taylor expansion reduces to 
\begin{equation*}
\frac{ \lambda_{gs,j}^h - \lambda_j}{ \lambda_j} = -\frac{1}{6048} t_j^6- \frac{1}{43200} t_j^8+ \mathcal{O}(t_j^{10}).
\end{equation*} 
For small $j$ so that $t_j<<1$, the inequality~\eqref{eq:scm1} holds as the error is dominated by $t_j^6$ ($t_j = j\pi h >1$ for large $j$).
In general, for any $j=1,2,\cdots, N^h-1$, we analyze
\begin{equation*}
\frac{ | \lambda_{gs,j}^h - \lambda_j |}{ \lambda_j} = \left| \frac{120 (5 + \cos(t_j)) \sin^2(t_j/2) }{t_j^2 (123 + 56\cos(t_j) + \cos(2t_j) )} -1 \right|.
\end{equation*} 
Since $t_j=j\pi h$ samples the interval $(0,\pi)$, we can consider a continuous variable $t\in (0,\pi)$ and prove more generally that 
\[
\left| \frac{120 (5 + \cos(t)) \sin^2(t/2) }{t^2 (123 + 56\cos(t) + \cos(2t) )} -1 \right| < \frac{1}{3024}t^6,
\]
or, equivalently, that 
$$ 
- t^8 \gamma < 3024 \big( 120 (5 + \cos(t_j)) \sin^2(t_j/2) -t^2 \gamma  \big) 
< 
t^8 \gamma, 
$$
for all $t\in (0,\pi)$ where $\gamma = 123 + 56\cos(t) + \cos(2t)$. 
The left-side inequality holds since  
$$
f(t) := t^8 \gamma + 3024 \big( 120 (5 + \cos(t_j)) \sin^2(t_j/2) -t^2 \gamma  \big) 
$$
is increasing on $(0,\pi)$ and $f(0)=0$.
Similarly, the right-side inequality holds since 
$$
g(t) := t^8 \gamma - 3024 \big( 120 (5 + \cos(t_j)) \sin^2(t_j/2) -t^2 \gamma  \big) 
$$
is increasing on $(0,\pi)$ and $f(0)=0$.
This completes the proof.
\end{proof}

\begin{remark}
    We note that the coefficient $\frac{1}{3024}$ in~\eqref{eq:scm1} is not sharp. However, this constant is enough to demonstrate the superconvergence of the GSFEM for all the approximated eigenvalues. 
\end{remark}

With the choice of $\eta_K = \frac{1}{12}$ and $\eta_M = \frac{1}{360}$,
the stiffness reduction ratio and the asymptotic stiffness reduction ratio of GSFEM are 
\begin{equation} 
\begin{aligned}
\rho_{gs}(h) & = \frac{5 + \cos(\pi h) }{5 - \cos(\pi h) }\cdot \frac{2 + \cos(\pi h) }{2 - \cos(\pi h) }
\cdot \frac{123 - 56\cos(\pi h) + \cos(2\pi h)}{123 + 56\cos(\pi h) + \cos(2\pi h) }, \\
\rho_{gs,\infty} & = \lim_{h \to 0} \rho_{gs}(h) = \frac{17}{10}.
\end{aligned}
\end{equation}
The asymptotic stiffness reduction percentage is $\varrho_{gs,\infty} = \frac{700}{17}$, meaning linear GSFEM reduces the stiffness of Galerkin FEM by about $41.2\%$. 

\begin{remark}
    Since $\rho_{gs,\infty} = \frac{17}{10} > \frac{3}{2} = \rho_{s,\infty}$, the asymptotic stiffness reduction ratio of GSFEM is greater than that of SoftFEM (See Eq. (3.6) in \cite{DENG2021119}). 
    Similarly, the asymptotic stiffness reduction percentage is $41.2\%$ larger than that of SoftFEM (33.3\%).
    Thus, GSFEM further reduces the stiffness of the SoftFEM. 
\end{remark}

\subsection{SoftFEMBQ} \label{sec:an2}
The above analysis extends to SoftFEMBQ. 
Firstly, the mass matrix $\bfs{M}_{sq}$ is a combination of the mass matrix in~\eqref{eq:kms1d} (by Gauss-Legendre rule) with the mass matrix by the  Gauss-Lobatto rule.
The mass matrix resulting from the Gauss-Lobatto rule is diagonal. 
This diagonal matrix is an identity matrix scaled by $h$ for linear uniform elements.
With all the matrix entries, we obtain the following analytical results. 

\begin{lemma}[Analytical eigenvalues and eigenvectors] \label{lem:sfembq1d}
For SoftFEMBQ approximation, the GMEVP $\mathbf{K}_{s} \mathbf{U}_{sq} = \lambda^h_{sq} \mathbf{M}_{sq} \mathbf{U}_{sq}$ in~\eqref{eq:softFEMBQ} has eigenpairs
$(\lambda_{sq,j}^h, \mathbf{U}_{sq,j})$ for all $j\in\{1,\cdots,N^h-1\}$ with 
\begin{equation} 
\begin{aligned}
\lambda_{sq,j}^h & = \frac{12}{h^2}
\frac{\big(1 - 2\eta_K + 2\eta_K\cos(t_j) \big) \sin^2(t_j/2)}
{3 - \alpha + \alpha \cos(t_j) }, \\
\mathbf{U}_{sq,j,k} & = c_j \sin(kt_j), \qquad k =1,\cdots, N^h-1,
\end{aligned}
\end{equation}
where $t_j:=j \pi h$, $\mathbf{U}_{sq,j,k}$ is the $k$-th component of the $j$-th eigenvector $\mathbf{U}_{sq,j}$, $c_j> 0$ is some normalization constant.
\end{lemma}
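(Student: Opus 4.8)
The plan is to mirror the proof of Lemma~\ref{lem:gsfem1d}: I will show that the two matrices of the GMEVP~\eqref{eq:softFEMBQ} are simultaneously diagonalised by the sine vectors $\mathbf{v}_{j,k}=\sin(kt_j)$ with $t_j=j\pi h$, and then read off $\lambda_{sq,j}^h$ as the ratio of the associated eigenvalues of $\mathbf{K}_{s}$ and $\mathbf{M}_{sq}$. The eigenvector claim then follows immediately, since a common eigenvector of both matrices is automatically a generalised eigenvector with generalised eigenvalue equal to that ratio.

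First I would assemble the two matrices explicitly. The stiffness matrix $\mathbf{K}_{s}=\mathbf{K}-\eta_K\mathbf{S}$ is exactly the one used in the SoftFEM and GSFEM analyses, with $\mathbf{K}$ and $\mathbf{S}$ from~\eqref{eq:kms1d}, so it retains the pentadiagonal Toeplitz-plus-Hankel structure whose boundary rows (the entry $5$ in place of $6$ in $\mathbf{S}$) encode the Hankel correction. For the mass side, the key observation is that the $(p+1)$-point Gauss--Lobatto rule on linear uniform elements produces a diagonal (lumped) mass matrix $\mathbf{M}_{\mathrm L}=h\,\mathbf{I}$; hence by~\eqref{eq:alphaM},
\[
\mathbf{M}_{sq}=\alpha\mathbf{M}+(1-\alpha)\mathbf{M}_{\mathrm L}=\tfrac{h}{6}\,\mathrm{tridiag}\!\big(\alpha,\,2(3-\alpha),\,\alpha\big),
\]
a \emph{symmetric Toeplitz tridiagonal} matrix. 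No Hankel correction is needed on the mass side because the lumped contribution only shifts the diagonal, and the sine vectors annihilate the phantom entries at indices $0$ and $N^h$ since $\sin(0)=\sin(N^h t_j)=\sin(j\pi)=0$. Establishing these two matrix forms is the only genuine setup step, and it is routine.

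Next I would invoke~\cite[Thm.~2.1]{deng2021analytical}, exactly as in Lemma~\ref{lem:gsfem1d}, to conclude that the common sine vectors $\mathbf{v}_j$ diagonalise both $\mathbf{K}_{s}$ and $\mathbf{M}_{sq}$. Equivalently, one verifies this directly: summing $\sin((k\pm1)t_j)$ and $\sin((k\pm2)t_j)$ via sum-to-product identities shows the interior rows act as scalar multiplications, giving $\mathbf{K}\mathbf{v}_j=\frac{4}{h}\sin^2(t_j/2)\,\mathbf{v}_j$, $\mathbf{S}\mathbf{v}_j=\frac{16}{h}\sin^4(t_j/2)\,\mathbf{v}_j$, and $\mathbf{M}_{sq}\mathbf{v}_j=\frac{h}{3}\big(3-\alpha+\alpha\cos t_j\big)\mathbf{v}_j$. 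Combining the first two yields $\mathbf{K}_{s}\mathbf{v}_j=\frac{4}{h}\sin^2(t_j/2)\big(1-2\eta_K+2\eta_K\cos t_j\big)\mathbf{v}_j$, and dividing by the mass eigenvalue reproduces the claimed formula for $\lambda_{sq,j}^h$, with eigenvector $c_j\mathbf{v}_j$.

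The main obstacle is the same as in the GSFEM case and is the only place where the Toeplitz-plus-Hankel (rather than pure Toeplitz) structure is essential: confirming that the boundary rows of $\mathbf{K}_{s}$ do not spoil the eigenvector property. Concretely, at $k=1$ the interior five-point stencil would call on the phantom value $\sin(-t_j)=-\sin(t_j)$, and the boundary entry $5$ in $\mathbf{S}$ is precisely what absorbs this reflection so that the first row reproduces the same scalar $\frac{16}{h}\sin^4(t_j/2)$ as the interior rows; the symmetric check holds at $k=N^h-1$. This reflection argument, packaged abstractly in~\cite[Thm.~2.1]{deng2021analytical}, is what makes the sine vectors exact eigenvectors rather than merely asymptotic ones, and it is the step I would treat most carefully.
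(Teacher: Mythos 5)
Your proposal is correct and follows essentially the same route as the paper: the paper assembles the matrices (noting that the Gauss--Lobatto mass matrix is $h\,\mathbf{I}$ for linear elements, so $\mathbf{M}_{sq}$ is tridiagonal Toeplitz) and then appeals to the Toeplitz-plus-Hankel eigenstructure result of \cite[Thm.~2.1]{deng2021analytical}, exactly as in Lemma~\ref{lem:gsfem1d}. Your explicit verification of the sine-vector eigenrelations, including the boundary-row check for the entry $5$ in $\mathbf{S}$, is a correct unpacking of that citation and all the stated scalar eigenvalues ($\tfrac{4}{h}\sin^2(t_j/2)$, $\tfrac{16}{h}\sin^4(t_j/2)$, $\tfrac{h}{3}(3-\alpha+\alpha\cos t_j)$) are right.
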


\begin{theorem}[Eigenvalue superconvergence] \label{thm:sfemobsp1}
Let $\lambda_j$ be the $j$-th exact eigenvalue of~\eqref{eq:pde} and let $\lambda_{sq,j}^h$ be the $j$-th approximate eigenvalue using linear SoftFEMBQ. Assume that $\eta_K = \frac{1}{20}$ and $\alpha = \frac{4}{5}$. The following holds:
\begin{equation}
\frac{ | \lambda_{sq,j}^h - \lambda_j|}{\lambda_j}  < \frac{1}{1440} (j \pi h)^6, \qquad \forall j\in\{1,\cdots, N^h-1\}.
\end{equation}
\end{theorem}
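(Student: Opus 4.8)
The plan is to follow the proof of Theorem~\ref{thm:gsfemp1} line for line, since Lemma~\ref{lem:sfembq1d} gives a closed form for $\lambda_{sq,j}^h$ of exactly the same structure. Writing $t_j=j\pi h$ and using $\lambda_j=j^2\pi^2=t_j^2/h^2$, I would first substitute $\eta_K=\tfrac1{20}$ and $\alpha=\tfrac45$ to reduce the relative error to
\begin{equation*}
\frac{\lambda_{sq,j}^h-\lambda_j}{\lambda_j}=\frac{6\,(9+\cos t_j)\sin^2(t_j/2)}{t_j^2\,(11+4\cos t_j)}-1.
\end{equation*}
To motivate these parameter values, I would Taylor-expand the \emph{general} relative error $\tfrac{12(1-2\eta_K+2\eta_K\cos t_j)\sin^2(t_j/2)}{t_j^2(3-\alpha+\alpha\cos t_j)}-1$ about $t_j=0$, collect the coefficients of $t_j^2$ and $t_j^4$, and impose that both vanish. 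These two conditions reduce to $\tfrac{\alpha}{6}=\tfrac1{12}+\eta_K$ and $\tfrac{\eta_K}{6}+\tfrac1{360}=\tfrac{\alpha}{72}$, whose unique solution is $\eta_K=\tfrac1{20}$, $\alpha=\tfrac45$. With these values the expansion begins at order $t_j^6$ (leading coefficient $-\tfrac{23}{75600}$), which both confirms the superconvergence order and fixes the sign of the error; note $\tfrac{23}{75600}<\tfrac1{1440}$, so the constant in the claimed bound is not sharp.

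For a bound valid at every index, I would pass to a continuous variable. Since $t_j=j\pi h$ samples the interval $(0,\pi)$, it suffices to prove
\begin{equation*}
\left|\frac{6\,(9+\cos t)\sin^2(t/2)}{t^2\,(11+4\cos t)}-1\right|<\frac{1}{1440}\,t^6,\qquad t\in(0,\pi).
\end{equation*}
Because $\beta(t):=11+4\cos t\ge 7>0$ on $(0,\pi)$, I would clear the denominator and recast this as the symmetric pair of inequalities
\begin{equation*}
-\,t^8\beta(t)<1440\bigl(6(9+\cos t)\sin^2(t/2)-t^2\beta(t)\bigr)<t^8\beta(t).
\end{equation*}

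Exactly as in Theorem~\ref{thm:gsfemp1}, I would then introduce
\begin{equation*}
f(t):=t^8\beta(t)+1440\bigl(6(9+\cos t)\sin^2(t/2)-t^2\beta(t)\bigr),
\end{equation*}
\begin{equation*}
g(t):=t^8\beta(t)-1440\bigl(6(9+\cos t)\sin^2(t/2)-t^2\beta(t)\bigr),
\end{equation*}
check that $f(0)=g(0)=0$, and argue that $f$ and $g$ are increasing on $(0,\pi)$; their positivity there yields the left and right inequalities respectively. The main obstacle is precisely this monotonicity claim: $f'$ and $g'$ are combinations of powers of $t$ with $\sin t$ and $\cos t$, and establishing $f'\ge0$, $g'\ge0$ throughout $(0,\pi)$ requires a careful sign analysis---e.g.\ bounding the trigonometric factors by their Taylor remainders, or subdividing $(0,\pi)$---rather than any one-line manipulation. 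The remaining steps (the algebraic simplification, the expansion bookkeeping, and the reduction to the continuous inequality) are routine.
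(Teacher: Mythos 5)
Your proposal is correct and follows essentially the same route as the paper: Taylor-expand the general relative error, set the $t_j^2$ and $t_j^4$ coefficients to zero to obtain $\eta_K=\tfrac1{20}$, $\alpha=\tfrac45$ (your simplified second condition is the paper's quartic condition reduced modulo the first, and your leading coefficient $-\tfrac{23}{75600}$ checks out), then reduce to the continuous inequality $\bigl|\tfrac{6(9+\cos t)\sin^2(t/2)}{t^2(11+4\cos t)}-1\bigr|<\tfrac{1}{1440}t^6$ on $(0,\pi)$ and dispose of it by clearing the positive denominator and arguing monotonicity of $f$ and $g$ from $f(0)=g(0)=0$. The paper's own proof is exactly this (it defers the final inequality to the argument of Theorem~\ref{thm:gsfemp1}), and it is no more detailed than you are about verifying $f'\ge 0$, $g'\ge 0$ on $(0,\pi)$.
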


\begin{proof}
The proof follows that of Theorem~\ref{thm:gsfemp1}. 
The Taylor expansion gives
\begin{equation*}
\begin{aligned}
    \frac{ \lambda_{sq,j}^h - \lambda_j}{ \lambda_j} 
    & = \Big( -\frac{1}{12} -\eta_K + \frac{\alpha}{6} \Big) t_j^2 + \frac{1 + 60\eta_K - 10 \alpha - 60 \alpha \eta_K + 10\alpha^2}{360}t_j^4 \\
    & \quad + 
    \frac{-3 -756\eta_K + 126 \alpha + 2520 \alpha \eta_K -420 \alpha^2 - 1680 \eta_K \alpha^2 + 280\alpha^3}{60480}  t_j^6+ \mathcal{O}(t_j^8),
\end{aligned}
\end{equation*} 
leading to the system of equations
\begin{align}
    -\frac{1}{12} -\eta_K + \frac{\alpha}{6} & = 0, \\
    \frac{1 + 60\eta_K - 10 \alpha - 60 \alpha \eta_K + 10\alpha^2}{360} & = 0,
\end{align}
whose solution $\eta_K = \frac{1}{20}$ and $\alpha = \frac{4}{5}$ provides the optimal choice for superconvergence. 
The rest of the proof follows similarly as a result of the inequality:
\[
\left| \frac{6 (9 + \cos(t)) \sin^2(t/2) }{t^2 (11 + 4\cos(t) )} -1 \right| < \frac{1}{1440}t^6,
\]
for $t\in (0, \pi)$.
\end{proof}

With the choice of $\eta_K = \frac{1}{20}$ and $\alpha = \frac{4}{5}$,
the stiffness reduction ratio and the asymptotic stiffness reduction ratio of SoftFEMBQ are 
\begin{equation} 
\begin{aligned}
\rho_{sq}(h) & = \frac{9 + \cos(\pi h) }{9 - \cos(\pi h) }\cdot \frac{2 + \cos(\pi h) }{2 - \cos(\pi h) }
\cdot \frac{11 - 4\cos(\pi h) }{11 + 4\cos(\pi h) }, \\
\rho_{sq,\infty} & = \lim_{h \to 0} \rho_{sq}(h) = \frac{7}{4}.
\end{aligned}
\end{equation}
The asymptotic stiffness reduction percentage is $\varrho_{sq,\infty} = \frac{300}{7}$, meaning linear SoftFEMBQ reduces the stiffness of Galerkin FEM by about $42.9\%$. 

\begin{remark}
    Since $\rho_{sq,\infty} = \frac{7}{4} > \rho_{gs,\infty} = \frac{17}{10} > \frac{3}{2} = \rho_{s,\infty}$, the asymptotic stiffness reduction ratio of SoftFEMBQ is greater than that of GSFEM which is greater than that of SoftFEM. 
    Similarly, the asymptotic stiffness reduction percentage of SoftFEMBQ is the largest among the three methods.  
\end{remark}

\subsection{Combination of GSFEM and SoftFEMBQ}
Lastly, in this section, we analyze an optimal combination of GSFEM and SoftFEMBQ.
Following the same analysis above, we derive analytical results below.
In particular, an extra superconvergent result of rate $\mathcal{O}(h^8)$ is achieved. 
To the best of our knowledge, this is the superconvergent result of the highest order in literature for linear elements.

\begin{lemma}[Analytical eigenvalues and eigenvectors] \label{lem:comb1d}
For GSFEMBQ approximation, the GMEVP $\mathbf{K}_{s} \mathbf{U}_{gsq} = \lambda^h_{gsq} \mathbf{M}_{gsq} \mathbf{U}_{gsq}$ in~\eqref{eq:GSFEMBQ} has eigenpairs
$(\lambda_{gsq,j}^h, \mathbf{U}_{gsq,j})$ for all $j\in\{1,\cdots,N^h-1\}$ with 
\begin{equation} 
\begin{aligned}
\lambda_{gsq,j}^h & = \frac{12}{h^2}
\frac{\big(1 - 2\eta_K + 2\eta_K\cos(t_j) \big) \sin^2(t_j/2)}
{3 + 18 \eta_M - \alpha + (\alpha-24\eta_M) \cos(t_j) + 6 \eta_M \cos(2t_j) }, \\
\mathbf{U}_{gsq,j,k} & = c_j \sin(kt_j), \qquad k =1,\cdots, N^h-1,
\end{aligned}
\end{equation}
where $t_j:=j \pi h$, $\mathbf{U}_{gsq,j,k}$ is the $k$-th component of the $j$-th eigenvector $\mathbf{U}_{gsq,j}$, $c_j> 0$ is some normalization constant.
\end{lemma}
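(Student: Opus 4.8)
The plan is to follow exactly the route used for Lemma~\ref{lem:gsfem1d}: recognize that both $\mathbf{K}_{s}$ and $\mathbf{M}_{gsq}$ lie in the class of symmetric Toeplitz-plus-Hankel matrices for which \cite[Thm.~2.1]{deng2021analytical} supplies closed-form eigenpairs, and then reduce the whole statement to a computation of the generating symbols of the two matrices. The eigenvector claim $\mathbf{U}_{gsq,j,k}=c_j\sin(kt_j)$ will come for free from the structural fact, and only the eigenvalue formula will require actual algebra.

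First I would record the constituents from~\eqref{eq:kms1d}. Both $\mathbf{K}$ and the Gauss--Legendre mass matrix $\mathbf{M}_{\text{G}}=\mathbf{M}$ are tridiagonal Toeplitz; the Gauss--Lobatto mass matrix $\mathbf{M}_{\text{L}}=hI$ is diagonal and hence trivially Toeplitz; and $\mathbf{S}$ is pentadiagonal Toeplitz except for the corner entries $5$ in place of $6$, which form its Hankel part. The only difference between $s(\cdot,\cdot)$ and $s_g(\cdot,\cdot)$ is the power $h_F$ versus $h_F^3$, and since $h_F=h$ here we have $\mathbf{S}_g=h^2\mathbf{S}$, so $\mathbf{S}_g$ inherits the identical Toeplitz-plus-Hankel form. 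Because $\mathbf{K}_{s}=\mathbf{K}-\eta_K\mathbf{S}$ and $\mathbf{M}_{gsq}=\alpha\mathbf{M}_{\text{G}}+(1-\alpha)\mathbf{M}_{\text{L}}+\eta_M\mathbf{S}_g$ are linear combinations of matrices in this class, and the class is closed under linear combinations, both satisfy the hypotheses of the cited theorem. This fixes the common eigenvectors as the discrete sine modes $\sin(kt_j)$ with $t_j=j\pi h$ and identifies $\lambda_{gsq,j}^h$ as the ratio of the two symbols evaluated at $t_j$.

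The remaining work is the symbol computation. Using $\sin((k-1)t)+\sin((k+1)t)=2\cos t\,\sin(kt)$ one reads off the symbols $\tfrac{2}{h}(1-\cos t)=\tfrac{4}{h}\sin^2(t/2)$ for $\mathbf{K}$, $\tfrac{1}{h}(6-8\cos t+2\cos 2t)=\tfrac{16}{h}\sin^4(t/2)$ for $\mathbf{S}$, $\tfrac{h}{3}(2+\cos t)$ for $\mathbf{M}_{\text{G}}$, and $h$ for $\mathbf{M}_{\text{L}}$. Combining and using $1-4\eta_K\sin^2(t/2)=1-2\eta_K+2\eta_K\cos t$, the symbol of $\mathbf{K}_{s}$ is $\tfrac{4}{h}(1-2\eta_K+2\eta_K\cos t)\sin^2(t/2)$, while that of $\mathbf{M}_{gsq}$ is $h\big[\tfrac{\alpha}{3}(2+\cos t)+(1-\alpha)+16\eta_M\sin^4(t/2)\big]$.

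The step that must be done carefully — the one place a constant or sign could slip — is rewriting $\sin^4(t/2)=\tfrac{1}{8}(3-4\cos t+\cos 2t)$ in the mass symbol and then clearing the factor $3$ from the denominator; this produces exactly $3+18\eta_M-\alpha+(\alpha-24\eta_M)\cos t+6\eta_M\cos 2t$ together with the overall prefactor $\tfrac{12}{h^2}$, and taking the quotient of the two symbols yields the stated $\lambda_{gsq,j}^h$. I expect this algebraic reduction (rather than any structural difficulty) to be the main obstacle, since the Gauss--Lobatto contribution $\mathbf{M}_{\text{L}}=hI$ and the $\cos 2t$ term from $\mathbf{S}_g$ must be folded together consistently. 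As built-in consistency checks, setting $\alpha=1$ must collapse the denominator to $2+18\eta_M+(1-24\eta_M)\cos t+6\eta_M\cos 2t$, recovering Lemma~\ref{lem:gsfem1d}, while setting $\eta_M=0$ must give $3-\alpha+\alpha\cos t$, recovering Lemma~\ref{lem:sfembq1d}; verifying both specializations confirms the computation.
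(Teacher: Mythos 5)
Your proposal is correct and follows the same route as the paper: the paper proves the analogous Lemma~\ref{lem:gsfem1d} by observing the Toeplitz-plus-Hankel structure of the matrices and invoking \cite[Thm.~2.1]{deng2021analytical}, and Lemma~\ref{lem:comb1d} is implicitly proved the same way. Your explicit symbol computation checks out — the symbols $\tfrac{4}{h}\sin^2(t/2)$, $\tfrac{16}{h}\sin^4(t/2)$, $\tfrac{h}{3}(2+\cos t)$, $h$, together with $\mathbf{S}_g=h^2\mathbf{S}$ and $\sin^4(t/2)=\tfrac{1}{8}(3-4\cos t+\cos 2t)$, combine to exactly the stated denominator, and both specializations ($\alpha=1$ and $\eta_M=0$) recover Lemmas~\ref{lem:gsfem1d} and~\ref{lem:sfembq1d} as you note.
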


\begin{theorem}[Eigenvalue superconvergence] \label{thm:combp1}
Let $\lambda_j$ be the $j$-th exact eigenvalue of~\eqref{eq:pde} and let $\lambda_{gsq,j}^h$ be the $j$-th approximate eigenvalue using linear GSFEMBQ. Assume that $\eta_K = \frac{31}{252}, \eta_M = \frac{23}{3780}$ and $\alpha = \frac{26}{21}$. The following holds:
\begin{equation}
\frac{ | \lambda_{gsq,j}^h - \lambda_j|}{\lambda_j}  < \frac{1}{30240} (j \pi h)^8, \qquad \forall j\in\{1,\cdots, N^h-1\}.
\end{equation}
\end{theorem}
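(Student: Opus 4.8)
The plan is to follow the template established in the proofs of Theorems~\ref{thm:gsfemp1} and~\ref{thm:sfemobsp1}, now pushing the cancellation one order higher. Starting from the closed-form expression for $\lambda_{gsq,j}^h$ in Lemma~\ref{lem:comb1d}, and using $\lambda_j=j^2\pi^2$ together with $t_j=j\pi h$ (so that $h^2 j^2\pi^2 = t_j^2$), I would write the normalized error as
\begin{equation*}
\frac{\lambda_{gsq,j}^h - \lambda_j}{\lambda_j} = \frac{12}{t_j^2}\,\frac{\big(1-2\eta_K+2\eta_K\cos t_j\big)\sin^2(t_j/2)}{3+18\eta_M-\alpha+(\alpha-24\eta_M)\cos t_j + 6\eta_M\cos 2t_j} - 1,
\end{equation*}
and Taylor expand in $t_j$ about $0$. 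The crucial difference from the previous two theorems is that we now have \emph{three} free parameters $\eta_K$, $\eta_M$, $\alpha$, which is exactly enough to annihilate the coefficients of $t_j^2$, $t_j^4$, \emph{and} $t_j^6$ simultaneously, thereby raising the leading error term from order $t_j^6$ to order $t_j^8$.

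Next I would extract the coefficients $c_2,c_4,c_6$ of $t_j^2,t_j^4,t_j^6$ and impose $c_2=c_4=c_6=0$. A short computation shows that at $t_j=0$ the denominator evaluates to $3$ independently of $\eta_M$, and that the order-$t_j^2$ term is $\big(\tfrac{\alpha}{6}-\tfrac{1}{12}-\eta_K\big)t_j^2$, which is again $\eta_M$-free and coincides with the leading equation of Theorem~\ref{thm:sfemobsp1}. This first equation is affine, so one can solve it for $\alpha$ (or $\eta_K$) and substitute forward; $\eta_M$ then enters only through the $t_j^4$ and $t_j^6$ equations, giving a nearly triangular system. Solving it yields the stated values $\eta_K=\tfrac{31}{252}$, $\eta_M=\tfrac{23}{3780}$, $\alpha=\tfrac{26}{21}$. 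I would note that $\alpha>1$ here, so the optimal blending is an \emph{extrapolation} rather than a convex combination of the Gauss--Legendre and Gauss--Lobatto mass matrices. Substituting these values collapses the expansion to $\frac{\lambda_{gsq,j}^h-\lambda_j}{\lambda_j}=c_8 t_j^8+\mathcal{O}(t_j^{10})$ for an explicit $c_8$, confirming the $\mathcal{O}(h^8)$ rate asymptotically.

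To upgrade this asymptotic statement to the claimed uniform bound over all admissible $j$, I would substitute the optimal parameters and simplify (one checks $1-2\eta_K+2\eta_K\cos t=\tfrac{95+31\cos t}{126}$ and that the denominator becomes $\tfrac{1}{630}(1179+688\cos t+23\cos 2t)$), reducing the inequality to a purely trigonometric one in the continuous variable $t\in(0,\pi)$, since $t_j=j\pi h$ samples that interval. Writing $\delta:=1179+688\cos t+23\cos 2t$, this takes the form
\[
\left| \frac{60\,(95+31\cos t)\,\sin^2(t/2)}{t^2\,\delta} - 1 \right| < \frac{1}{30240}\,t^8, \qquad t\in(0,\pi).
\]
As in Theorem~\ref{thm:gsfemp1}, I would clear denominators, split into two one-sided inequalities, and verify each by showing that the associated auxiliary function, of the form $t^8\delta \pm 30240\,\big(60(95+31\cos t)\sin^2(t/2)-t^2\delta\big)$, vanishes at $t=0$ and is monotonically increasing on $(0,\pi)$, for instance by confirming that its derivative is nonnegative there.

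The main obstacle I anticipate is twofold. First, the symbolic Taylor expansion to order $t_j^6$ with three coupled parameters is substantially heavier than in the earlier theorems, and the $t^4$ and $t^6$ equations are genuinely polynomial (not merely linear) in $\eta_K,\eta_M,\alpha$, so solving the $3\times 3$ system cleanly is best delegated to a computer algebra system. Second, and more delicate, is the rigorous global verification of the trigonometric inequality on $(0,\pi)$: unlike the asymptotic argument, the monotonicity of the auxiliary functions is not automatic near $t=\pi$, and one must confirm that the constant $\tfrac{1}{30240}$ is large enough to dominate the $\mathcal{O}(t^{10})$ remainder uniformly while keeping the derivative sign definite across the whole interval.
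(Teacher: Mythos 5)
Your proposal is correct and follows exactly the strategy the paper intends: the paper itself omits the details, saying only that the proof "proceeds similarly" to Theorems~\ref{thm:gsfemp1} and~\ref{thm:sfemobsp1}, and the accompanying remark confirms that the parameters are obtained by annihilating the $t_j^2, t_j^4, t_j^6$ coefficients of the stated Taylor expansion. Your simplifications ($1-2\eta_K+2\eta_K\cos t = \tfrac{95+31\cos t}{126}$, denominator $\tfrac{1}{630}(1179+688\cos t+23\cos 2t)$) check out against the paper's own stiffness-reduction formula, and your reduction to a monotonicity argument for the auxiliary functions on $(0,\pi)$ mirrors the paper's treatment of the earlier theorems.
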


\begin{proof}
The proof proceeds similarly to the proof of Theorem~\ref{thm:gsfemp1} and Theorem~\ref{thm:sfemobsp1}. 
We omit it here for simplicity. 
\end{proof}

\begin{remark}[Optimal choice of parameters]
The optimal choice $\eta_K = \frac{31}{252}, \eta_M = \frac{23}{3780}$ and $\alpha = \frac{26}{21}$
is the solution to the equations that the coefficients of the terms $t^2_j, t^4_j, t^6_j$ are zeros for the following Taylor expansion. 
\begin{equation*}
\begin{aligned}
    \frac{ \lambda_{gsq,j}^h - \lambda_j}{ \lambda_j} 
    & = \Big( -\frac{1}{12} -\eta_K + \frac{\alpha}{6} \Big) t_j^2 + \frac{1 + 60\eta_K - 360\eta_M - 10 \alpha - 60 \alpha \eta_K + 10\alpha^2}{360}t_j^4 \\
    & \quad + 
    \frac{-3 -756\eta_K +15120\eta_M + 126 \alpha + 60480\eta_K \eta_M +  2520 \alpha \eta_K -20160\alpha \eta_M }{60480}  t_j^6+ \\
    & \quad + 
    \frac{-420 \alpha^2 - 1680 \eta_K \alpha^2 + 280\alpha^3}{60480}  t_j^6 +
    \mathcal{O}(t_j^8).
\end{aligned}
\end{equation*} 
\end{remark}

With the choice of $\eta_K = \frac{31}{252}, \eta_M = \frac{23}{3780}$ and $\alpha = \frac{26}{21}$,
the stiffness reduction ratio and the asymptotic stiffness reduction ratio of GSFEMBQ are 
\begin{equation} 
\begin{aligned}
\rho_{gsq}(h) & = \frac{2 + \cos(\pi h) }{2 - \cos(\pi h) }
\cdot  \frac{95 + 31\cos(\pi h) }{95 - 31\cos(\pi h) }\cdot \frac{1179 - 688\cos(\pi h) + 23\cos(2\pi h)}{1179 + 688\cos(\pi h) + 23\cos(2\pi h) }, \\
\rho_{gsq,\infty} & = \lim_{h \to 0} \rho_{gsq}(h) = \frac{257}{160}.
\end{aligned}
\end{equation}
The asymptotic stiffness reduction percentage is $\varrho_{gsq,\infty} = \frac{9700}{257}$, meaning linear GSFEMBQ reduces the stiffness of Galerkin FEM by about $37.7\%$. 

\begin{remark}
    The choice of $\eta_K = \frac{31}{252}, \eta_M = \frac{23}{3780}$ and $\alpha = \frac{26}{21}$ leads to extra-superconvergent approximations when compared against than GSFEM and SoftFEMBQ.
    However, the stiffness reduction ratio is smaller than that of both GSFEM and SoftFEMBQ.
    We can further reduce the stiffness using a variable to optimize the reduction. Thus, we set 
    \begin{align}
    -\frac{1}{12} -\eta_K + \frac{\alpha}{6} & = 0, \\
    \frac{1 + 60\eta_K - 360\eta_M - 10 \alpha - 60 \alpha \eta_K + 10\alpha^2}{360} & = 0.
    \end{align}
    The solution is
    $$\eta_K = \frac{2\alpha - 1}{12}, \qquad \eta_M = \frac{5\alpha - 4}{360}.$$ 
    The case $\alpha=1$ reduces to GSFEM, see Section~\ref{sec:an1}, the case $\alpha=\frac{4}{5}$ reduces to the case of SoftFEMBQ, see Section~\ref{sec:an2}.
    This parameter allows us to obtain
    $$\rho_{gsq,\infty}  = \frac{37-20\alpha}{20 - 10\alpha},$$ 
    which is a decreasing function. Thus, the maximal reduction ratio is $\rho_{gsq,\infty}  = \frac{37}{20}$ (the corresponding percentage is $\varrho_{gsq,\infty} = 45.9\%$) when $\alpha=0$ (corresponding to the mass matrix integrated by the Gauss-Lobatto rule).
    In this case, $\eta_K = -\frac{1}{12}, \eta_M = -\frac{1}{90}.$
\end{remark}

\begin{remark}[Higher-order elements and multiple dimension problems]
    The quadratic and cubic elements for the standard Galerkin FEM were analyzed in \cite{DENG2021119}. 
    For softFEM and our proposed methods, the resulting matrices are much more complicated, and the analytical eigenpairs are unavailable. 
    Lastly, the analytical results for linear elements in 1D can be generalized to the case of tensor-product meshes in multiple dimensions by following, for example, \cite{Ainsworth10, calo2019dispersion, deng2021analytical}. 
\end{remark}

\section{Numerical Experiments}
\label{sec:numExperiments}

This section discusses numerical experiments to assess the performance of the methods proposed in Section~\ref{sec:main} and analyzed in Section~\ref{sec:ana}. These experiments examine the eigenvalue problems with various parameters, focusing on condition numbers and stiffness reduction ratios. Additionally, we validate the extra-superconvergence results established in Section~\ref{sec:ana}. 
These investigative efforts are instrumental in comparing the proposed methods against the standard Galerkin FEM and SoftFEM techniques, offering crucial insights into their respective benefits and efficiencies.

\subsection{Study on Superconvergence}
Firstly, we demonstrate the superconvergence results established in Section~\ref{sec:ana}.
For this purpose, we focus on the one-dimensional Laplace eigenvalue problem with $\Omega = (0, 1)$. The true eigenvalues and their corresponding $L^2$-normalized eigenfunctions are $ \lambda_j = j^2 \pi^2 \ \text{and} \ u_j(x) = \sqrt{2} \sin( j\pi x), j = 1, 2, \cdots.
$
The estimated eigenpairs are arranged in ascending order for comparison with their corresponding exact counterparts. To quantitatively analyze the differences, we present the relative eigenvalue errors, which are denoted as 
$\frac{\lambda_{*,j}^h - \lambda_j}{\lambda_j}$, where $\lambda_j$ denotes the $j$-th true (or reference) eigenvalue, and $\lambda_{*,j}^h$ is its counterpart obtained numerically. $*$ refers to the method; it is omitted when the context is clear. A smaller relative error indicates a closer alignment of our numerical solutions with the exact values, underscoring the accuracy of the computational approaches.
Table \ref{tab:superconvergence} shows the first eigenvalue relative errors when using linear GSFEM, SoftFEMBQ, GSFEMBQ with $\eta_K = \frac{31}{252}, \eta_M = \frac{23}{3780}$, $\alpha = \frac{26}{21}$, and GSFEMBQ with $\eta_K = -\frac{1}{12}, \eta_M = -\frac{1}{90}$, $\alpha = 0$. 
We used $N^h= 4,8,16$ and 32 uniform elements. 
The convergence rates are 6.03, 6.02, 8.4, and 5.97 respectively, confirming the superconvergence rates established in Section~\ref{sec:ana} when fixing the eigenvalue index $j=1$.
For linear elements, a superconvergence rate of 8 is first observed here.
This is 6 orders higher than the optimal convergence rate which is 2 for linear FEM.

\begin{table}[h!]
    \centering
    \footnotesize
    \begin{tabular}{ccccc}
    \toprule
    $N^h$ & $\frac{\lambda^h_{gs,1}-\lambda^h_1}{\lambda^h_1}$ & $\frac{\lambda^h_{sq,1}-\lambda^h_1}{\lambda^h_1}$ & $\frac{\lambda^h_{gsq,1}-\lambda^h_1}{\lambda^h_1}$ & $\frac{\hat\lambda^h_{gsq,1}-\lambda^h_1}{\lambda^h_1}$ \\
    \midrule

    $4$ & $4.22e{-5}$ & $7.41e{-5}$ & $2.58e{-6}$ & $1.90e{-4}$\\[1mm]
    $8$ & $6.20e{-7}$ & $1.13e{-6}$ & $9.56e{-9}$ & $3.10e{-6}$\\[1mm]
    $16$ & $9.53e{-9}$ & $1.75e{-8}$ & $3.68e{-11}$ & $4.91e{-8}$\\[1mm]
    $32$ & $1.48e{-10}$ & $2.73e{-10}$ & $6.58e{-14}$ & $7.69e{-10}$\\[1mm]
    Order & 6.03 & 6.02 & 8.4 & 5.97 \\[1mm]   
    \bottomrule
    \end{tabular}
    \caption{Eigenvalue errors for the first eigenvalue when using GSFEM, SoftFEMBQ, GSFEMBQ ($\lambda^h_{gsq,1}, \eta_K = \frac{31}{252}, \eta_M = \frac{23}{3780}$, $\alpha = \frac{26}{21}$), and GSFEMBQ ($\hat\lambda^h_{gsq,1}, \eta_K = -\frac{1}{12}, \eta_M = -\frac{1}{90}$, $\alpha = 0$).}
    \label{tab:superconvergence}
\end{table}

For larger eigenvalues, Figure \ref{fig:superconvergence} shows the whole spectra in logarithmic scale when using $N^h=32$ uniform elements. 
The superconvergence rates are 6.18, 6.02, 8.37, and 5.76 for GSFEM, SoftFEMBQ, GSFEMBQ with $\eta_K = \frac{31}{252}, \eta_M = \frac{23}{3780}$, $\alpha = \frac{26}{21}$, and GSFEMBQ with $\eta_K = -\frac{1}{12}, \eta_M = -\frac{1}{90}$, $\alpha = 0$, respectively. 
This confirms the superconvergence rates in terms of index $j$ established in Section~\ref{sec:ana} when fixing mesh size $h=1/N^h$.

\begin{figure}[h!]
    \centering
    \includegraphics[width=0.4\textwidth]{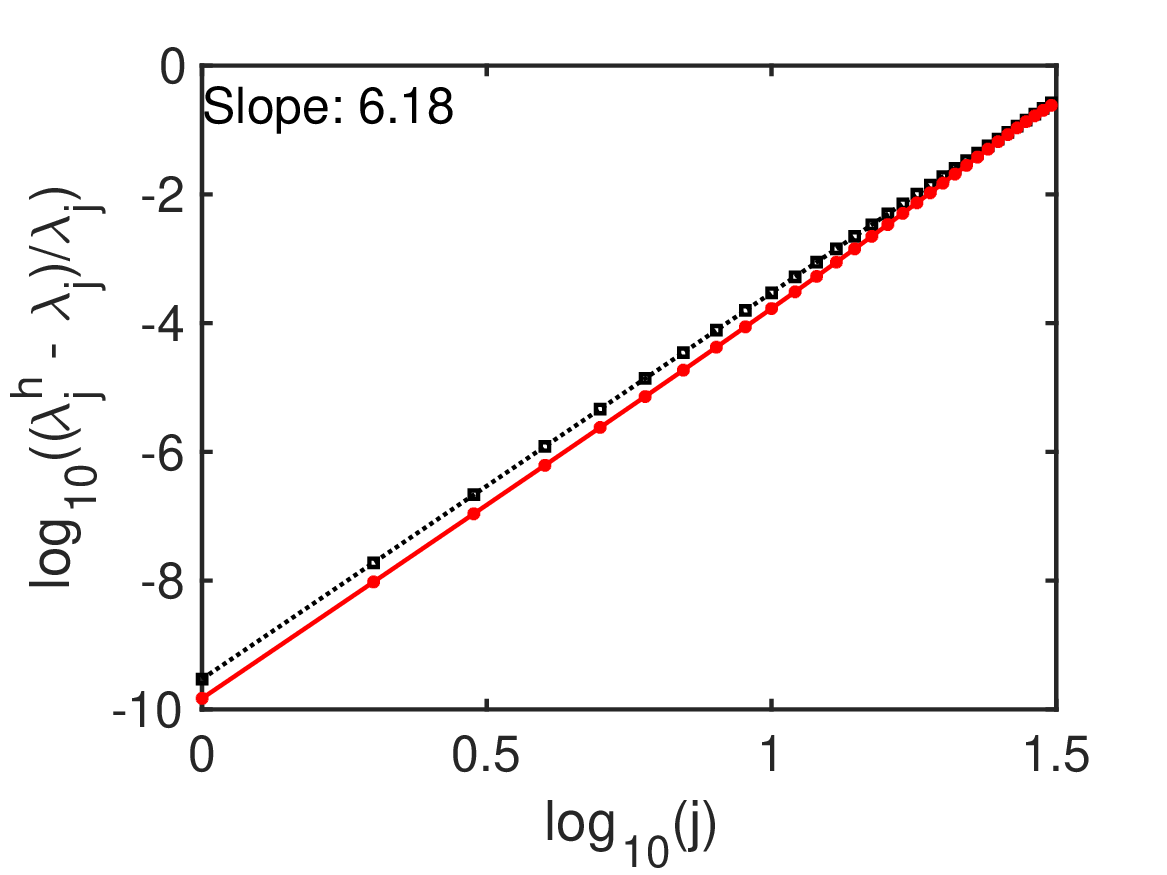}
    \includegraphics[width=0.4\textwidth]{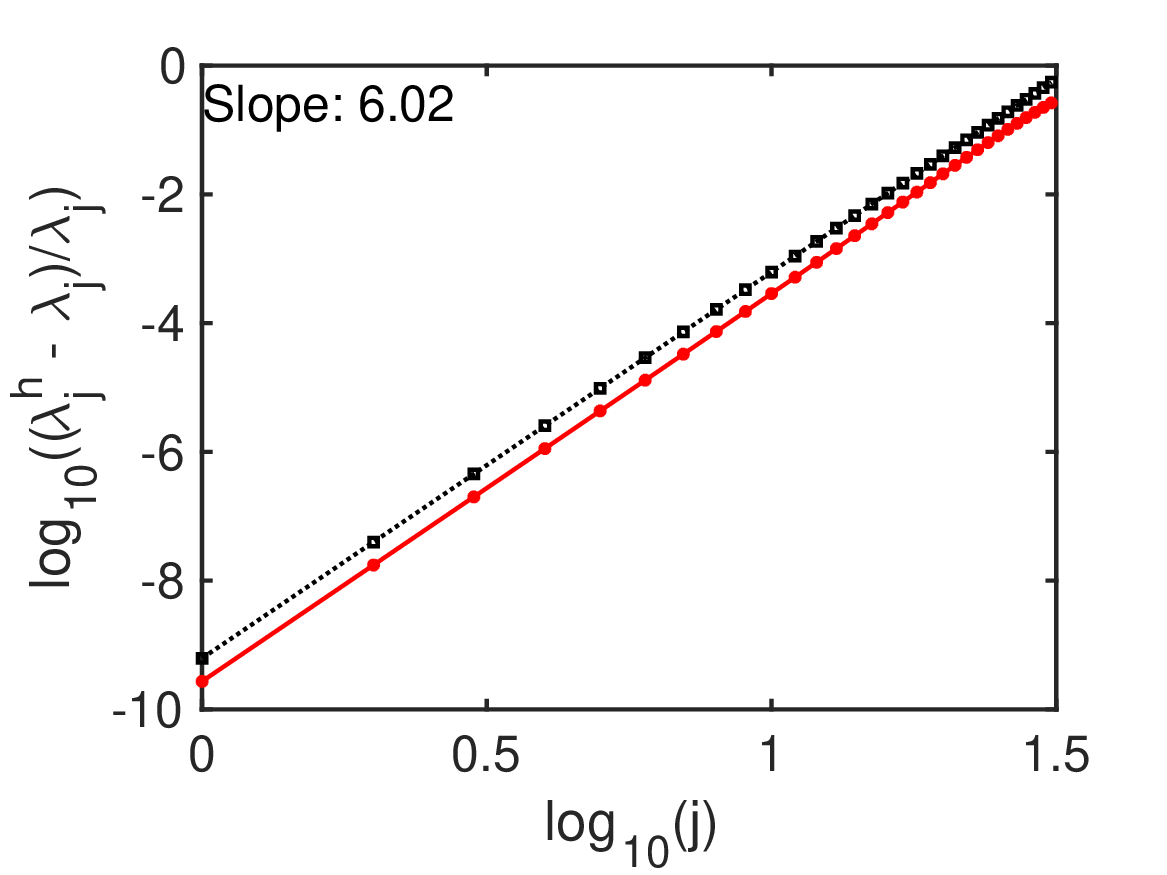}
    \includegraphics[width=0.4\textwidth]{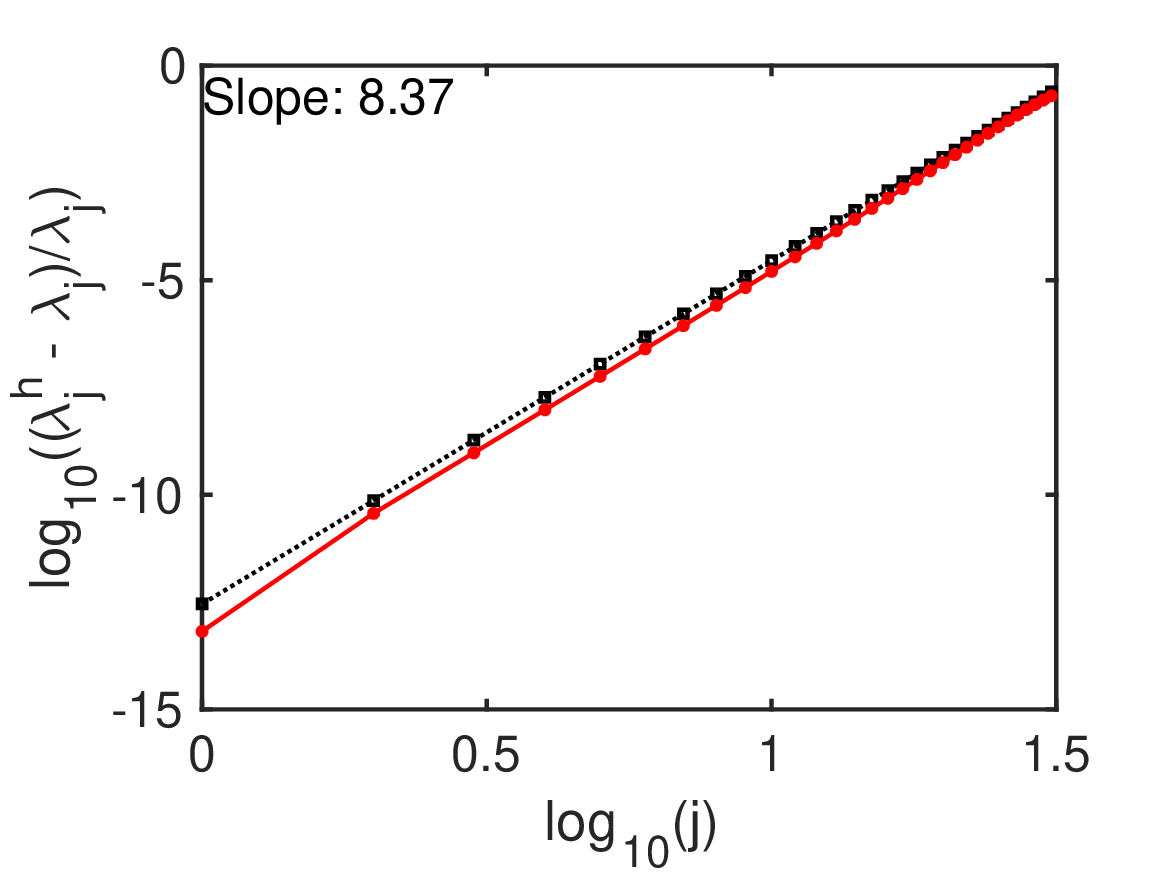}
    \includegraphics[width=0.4\textwidth]{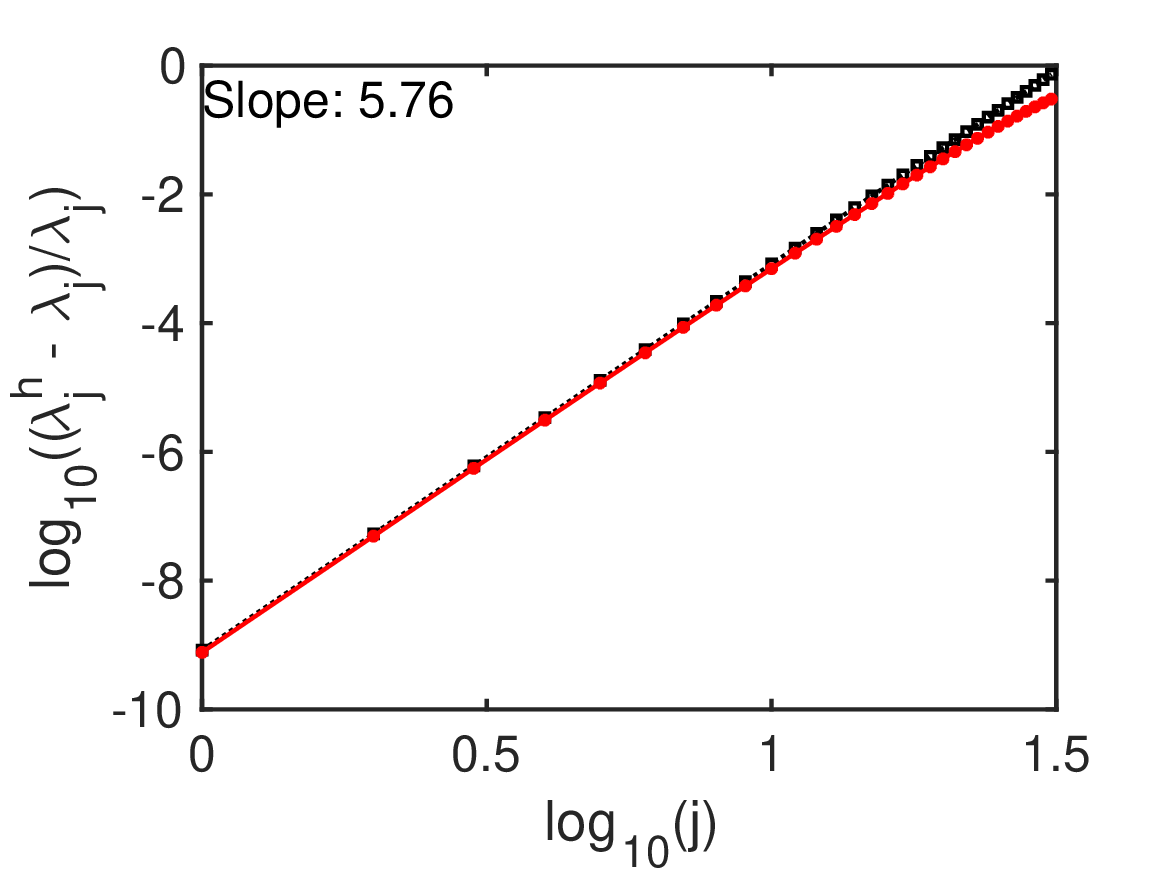}

    \caption{Eigenvalue superconvergence (red lines) for various methods with linear uniform elements with size $N^{h}=32$. 
    The black dotted lines are reference lines with theoretic orders (see Section \ref{sec:ana}) 6, 6, 8, and 6, respectively, for GSFEM (top left), SoftFEMBQ (top right), GSFEMBQ with $\eta_K = \frac{31}{252}, \eta_M = \frac{23}{3780}$ and $\alpha = \frac{26}{21}$ (bottom left), and GSFEMBQ with $\eta_K = -\frac{1}{12}, \eta_M = -\frac{1}{90}$ and $\alpha = 0$ (bottom right).}
    \label{fig:superconvergence}
\end{figure}

\subsection{Study on Stiffness Reduction}

We now focus on the reduction of the stiffness of the discretized systems, characterized by condition numbers. 
Our investigation commences with the study of GSFEM. Subsequently, we study the approach of SoftFEMBQ and GSFEMBQ  with a comprehensive comparison of all other methods under consideration.

\subsubsection{GSFEM}
\label{sec:MainResultsGSFEM}

\begin{table}[h!]
    \centering
    \begin{tabular}{cccc}
    \toprule
    $p$ & $\eta_{K}$ & $\eta_{M,\max}$ & Selected $\eta_{M}$ \\
    \midrule
    $1$ & $\frac{1}{12}$ & $\frac{5}{144}$ & $\frac{1}{360}$ \\[1mm]
    $2$ & $\frac{1}{24}$ & $\frac{1}{1439}$ & $\frac{1}{2880}$\\[1mm]
    $3$ & $\frac{1}{40}$ & $\frac{1}{28270}$ & $\frac{1}{57600}$\\
    \bottomrule
    \end{tabular}
    \caption{Softness parameters $\eta_{M}$ with $\eta_{K}=\frac{1}{2(p+1)(p+2)}$ for $p=1, 2, 3$.}
    \label{tab:result1}
\end{table}

We first determine an upper bound for the softness parameter $\eta_{M}$ based on the monotonicity of eigenvalues. 
We first note that the coercivity of the bilinear form $b_{gs} (\cdot,\cdot)$ is guaranteed as long as $\eta_{M}\ge 0$. 
With the choice $\eta_{K} = \frac{1}{2(p+1)(p+2)}$ as in the softFEM 
and the insight of Theorem~\ref{thm:gsfemp1}, 
the choice of the parameter $\eta_{M}$ is summarized in Table~\ref{tab:result1}. 
Therein, $\eta_{M, \max}$ shows the maximal values of $\eta_{M}$ such that the sorted eigenvalues of the discretized system are paired with energy-increasing eigenfunctions (for larger $\eta_{M}>\eta_{M, \max}$, the energies of eigenfunctions oscillate at high-frequency region after sorting the eigenvalues).
In particular, for linear elements, $\eta_{M, \max} = \frac{5}{144}$ is derived analytically using \eqref{eq:evm1} so that the discrete eigenvalues are non-decreasing with respect to the index in \eqref{eq:evm1}. 
In other words, the function $\lambda_{gs,j}^h = \lambda_{gs}^h(j)$ is monotonically increasing only if $\eta_m \le \eta_{M, \max} = \frac{5}{144}$ for linear elements. 
For higher-order elements, the values of $\eta_{M, \max}$ are obtained numerically.

\begin{figure}
    \centering
    \begin{subfigure}[b]{0.32\textwidth}
    \includegraphics[width=\textwidth]{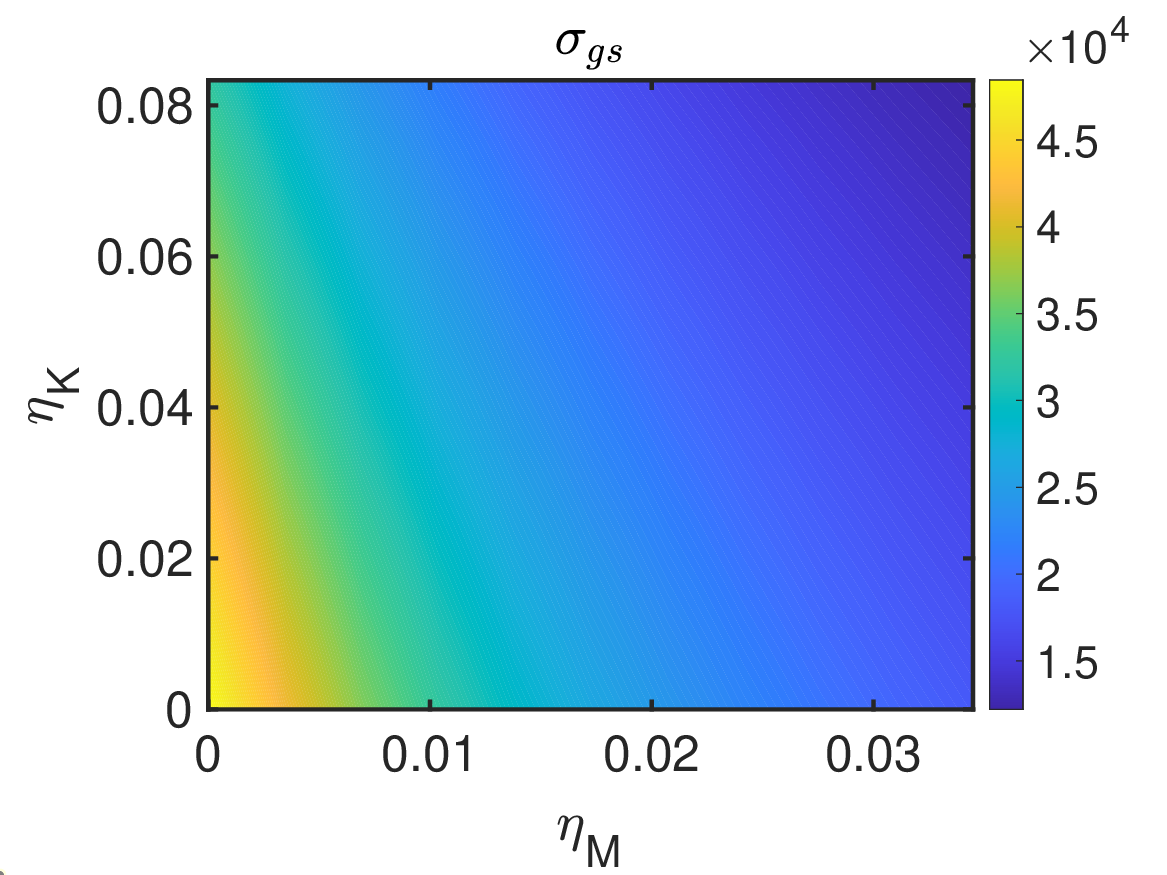}
    \end{subfigure}
    \begin{subfigure}[b]{0.32\textwidth}
    \includegraphics[width=\textwidth]{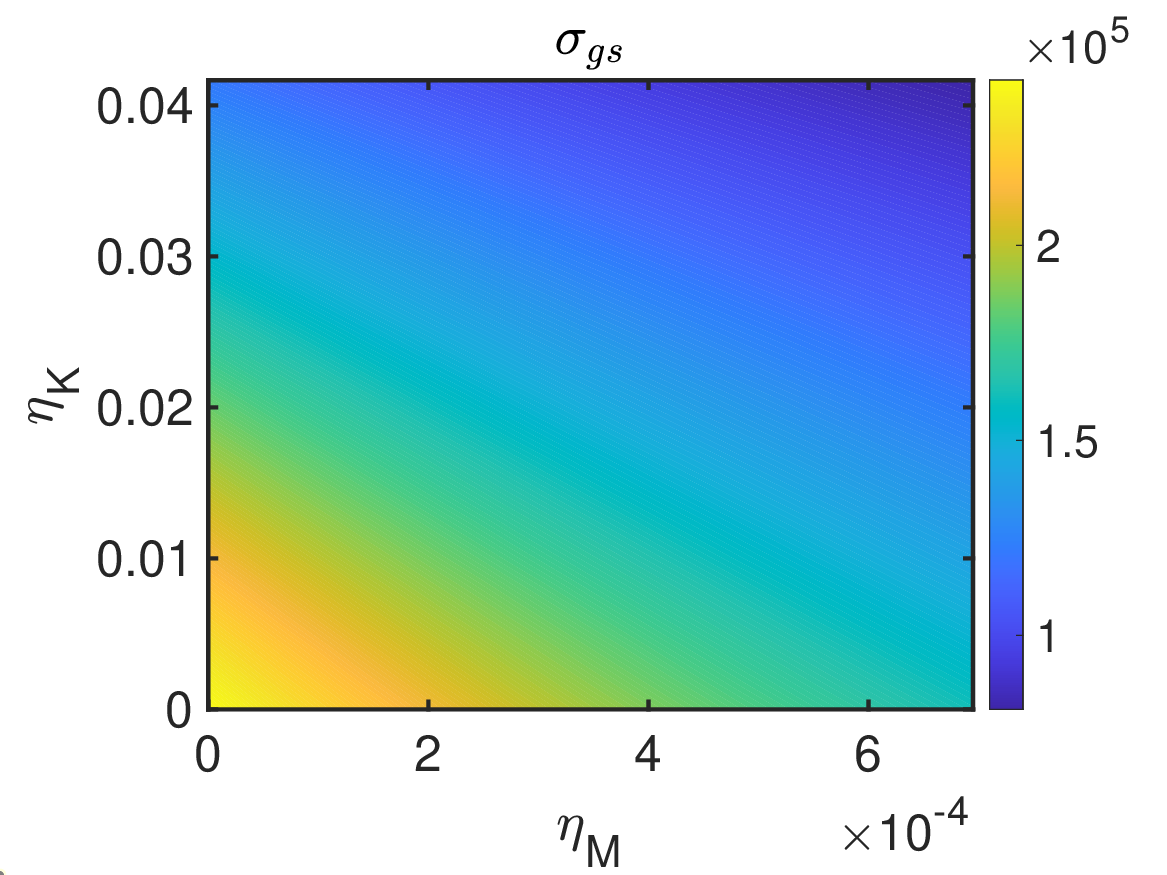}
    \end{subfigure}
    \begin{subfigure}[b]{0.32\textwidth}
    \includegraphics[width=\textwidth]{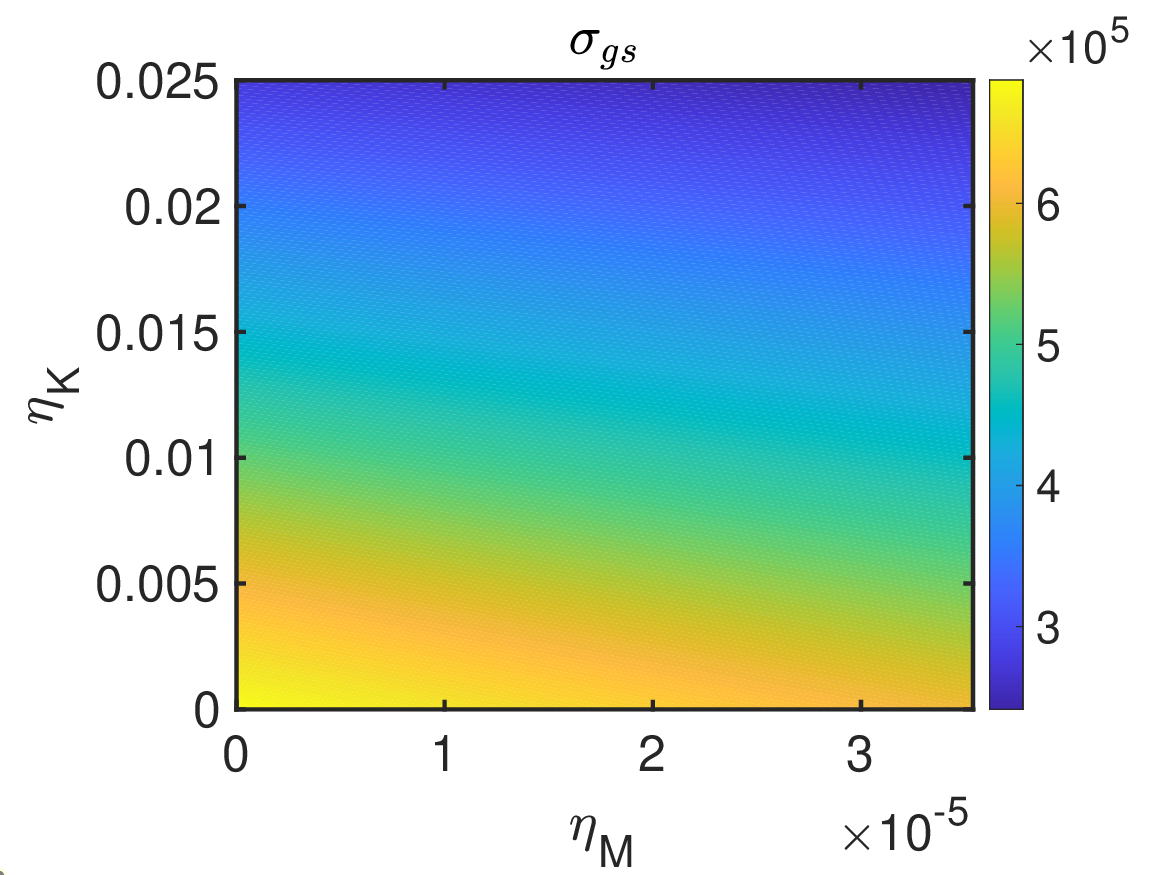}
    \end{subfigure}
    \caption{Condition number $\sigma_{gs}$ of 1D Laplace eigenvalue problem when using GSFEM with $N^{h}=200$ uniform elements with $p=1$ (left),  $p=2$ (middle),  and $p=3$ (right). }
    \label{fig:GSFEM_comparisionKM}
\end{figure}

Using these parameters in Table~\ref{tab:result1}, Figure~\ref{fig:GSFEM_comparisionEV&E} illustrates a key finding of the GSFEM. 
For linear elements, the eigenvectors of the matrix eigenvalue problem of GSFEM are the same as the eigenvectors of that of FEM; see the analytical eigenvectors in \eqref{eq:evm1}. 
Thus, the eigenfunction error of GSFEM maintains the same as the Galerkin FEM and SoftFEM. 
For quadratic and cubic elements, the GSFEM demonstrates lower eigenfunction errors compared to those observed in Galerkin FEM and SoftFEM. In our experimental scenarios, selecting a higher value of the softness parameter $\eta_{M}$ leads to a more significant decrease in eigenfunction errors. 
Moreover, for a mesh with $N^h=200$ uniform elements, as the softness parameter $\eta_{M}$ increases, there is a consistent decrease in the condition number, as demonstrated in Figure~\ref{fig:GSFEM_comparisionKM}. With $\eta_{K}$ and $\eta_{M,\max}$ in Table~\ref{tab:result1}, the stiffness reduction ratios of the GSFEM are approximately $3.98$, $3.00$, and $2.86$ for $p=1$, $p=2$, and $p=3$, respectively. In comparison, for SoftFEM, the reduction rates are nearly $1.50$, $2.00$, and $2.50$, respectively. 
Thus, GSFEM further reduces the condition numbers of SoftFEM. 
This is also expected in the view of Rayleigh quotient $\lambda^h_* = \frac{a_s(u^h, u^h) }{b_{*}(u^h, u^h)}$ as GSFEM tends to have a larger denominator than SoftFEM. 

We now consider the 2D Laplace eigenvalue problems on tensor-product meshes. We study the spectral problem in \eqref{eq:pde} with $\Omega=(0,1)^2$ and $\kappa=1$. The true eigenvalues and normalized eigenfunctions are
$
\lambda_{ij} = (i^2 + j^2)\pi^2
$
and 
$u_{ij}(x,y) = 2 \sin(i\pi x)\sin(j\pi y)$, $i,j = 1, 2, \cdots$, respectively.

\begin{figure}[h!]
\centering
\includegraphics[width=\textwidth]{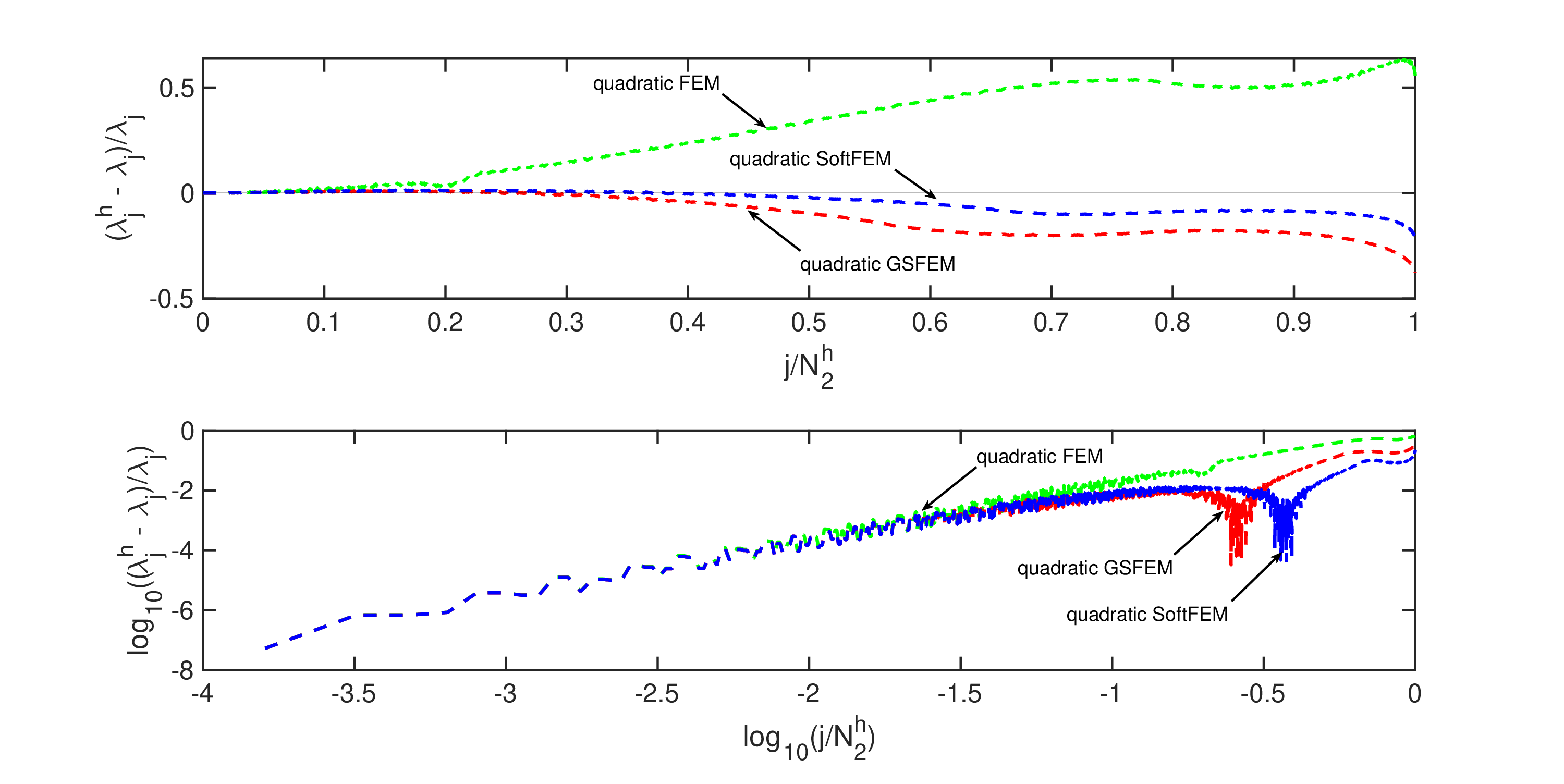}
\caption{Relative eigenvalue errors for the 2D Laplace eigenvalue problem when using quadratic Galerkin FEM, SoftFEM and GSFEM with a mesh of size $40\times 40$.}
\label{fig:GSFEM2dp2}
\includegraphics[width=\textwidth]{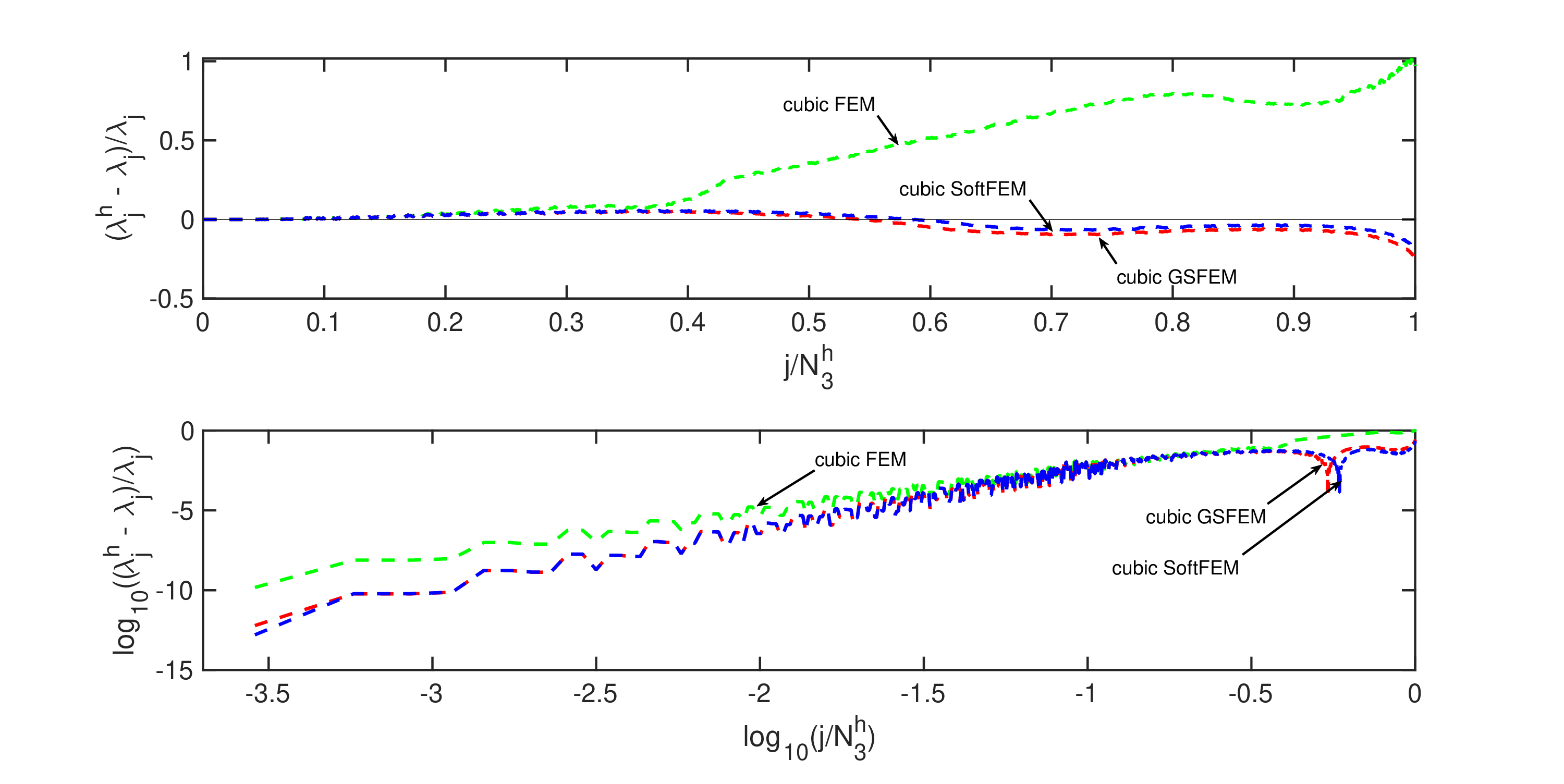}
\caption{Relative eigenvalue errors for the 2D Laplace eigenvalue problem when using cubic Galerkin FEM, SoftFEM and GSFEM with a mesh of size $20\times 20$.}
\label{fig:GSFEM2dp3}
\end{figure}

Inspired by the outcomes of the one-dimensional numerical experiments discussed before, we adopt the parameter setting $\eta_{K}=\frac{1}{2(p+1)(p+2)}$ with $\eta_{M}$ in Table~\ref{tab:result1}.
Figures~\ref{fig:GSFEM2dp2} and~\ref{fig:GSFEM2dp3} depict the relative errors in eigenvalues observed when applying quadratic and cubic Galerkin FEM, SoftFEM and GSFEM in the two-dimensional setting. For quadratic elements, a uniform mesh with $40\times40$ elements is utilized, while for cubic elements, a uniform mesh of size $20\times20$ is used. 
We observe that GSFEM further reduces the stiffness of the softFEM discretized system.

\subsubsection{Generalization with Blending Quadratures and Comparisons} 
\label{sec:MainResultsSoftFEMBQ}

Following Section~\ref{sec:MainResultsGSFEM}, we now study the generalization with blending quadratures. 
Similarly, we consider the 1D Laplace eigenvalue problem with the domain $\Omega = (0, 1)$. We use various blending parameters to demonstrate the performance of the method.
Herein, we also study the eigenfunction errors and their comparisons. We employ the $L^2$ norm and the $H^1$ semi-norm. The $L^2$ norm error measures the difference between the numerically computed eigenfunctions and the reference or exact solutions. In contrast, the $H^1$ semi-norm error characterizes the accuracy in the gradient of the eigenfunctions.
The $L^2$ norm error is quantified as $\lVert u_j - u_j^h \rVert_{L^2}$, and the $H^1$ semi-norm error is normalized and defined as $\frac{|u_j - u_j^h|_{H^1}}{\lambda_j}$. 
The normalization of the $H^1$ error by the eigenvalue is crucial as it renders the error measure more meaningful relative to the magnitude of the eigenvalue. These error assessments are pivotal in evaluating the efficacy of the numerical methods in accurately capturing the system's modal behaviour, especially in high-order spectral regions.

\begin{figure}[h!]
    \centering

    \includegraphics[width=0.45\textwidth]{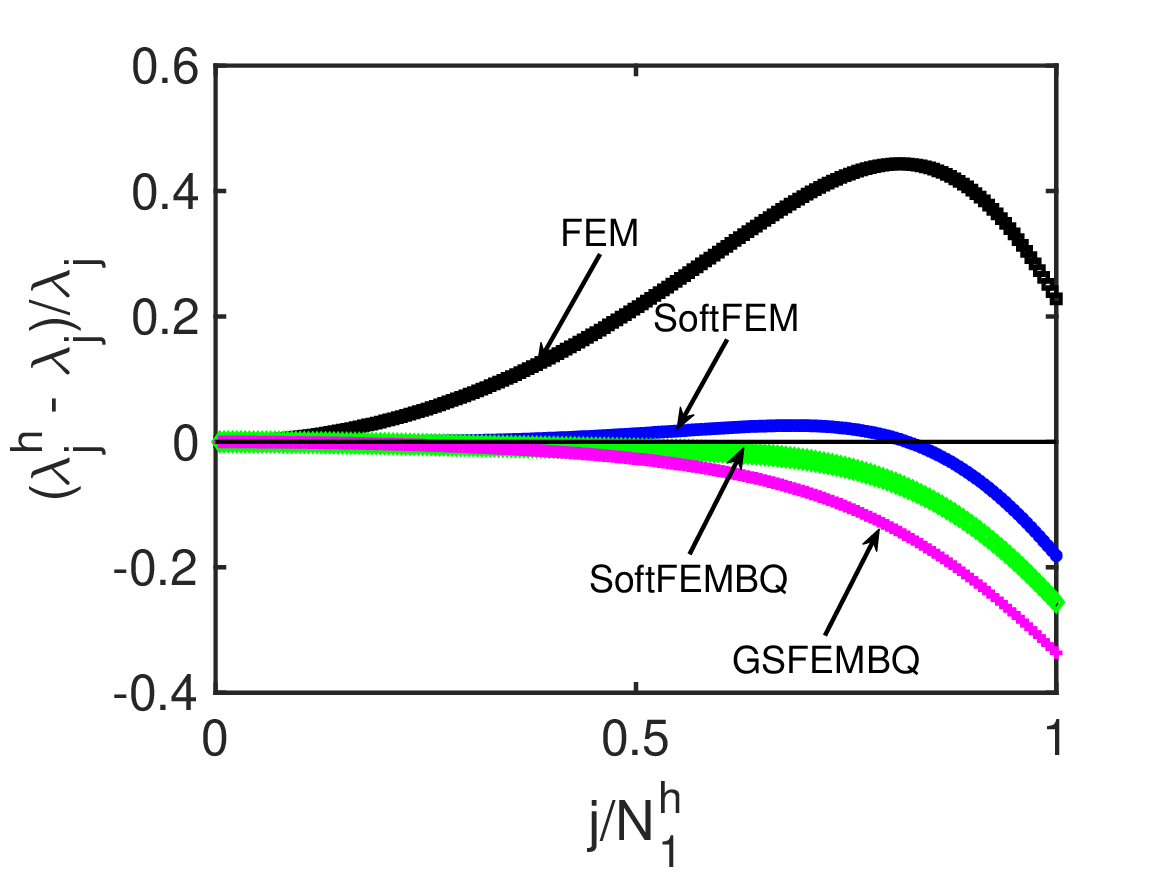}
    \includegraphics[width=0.45\textwidth]{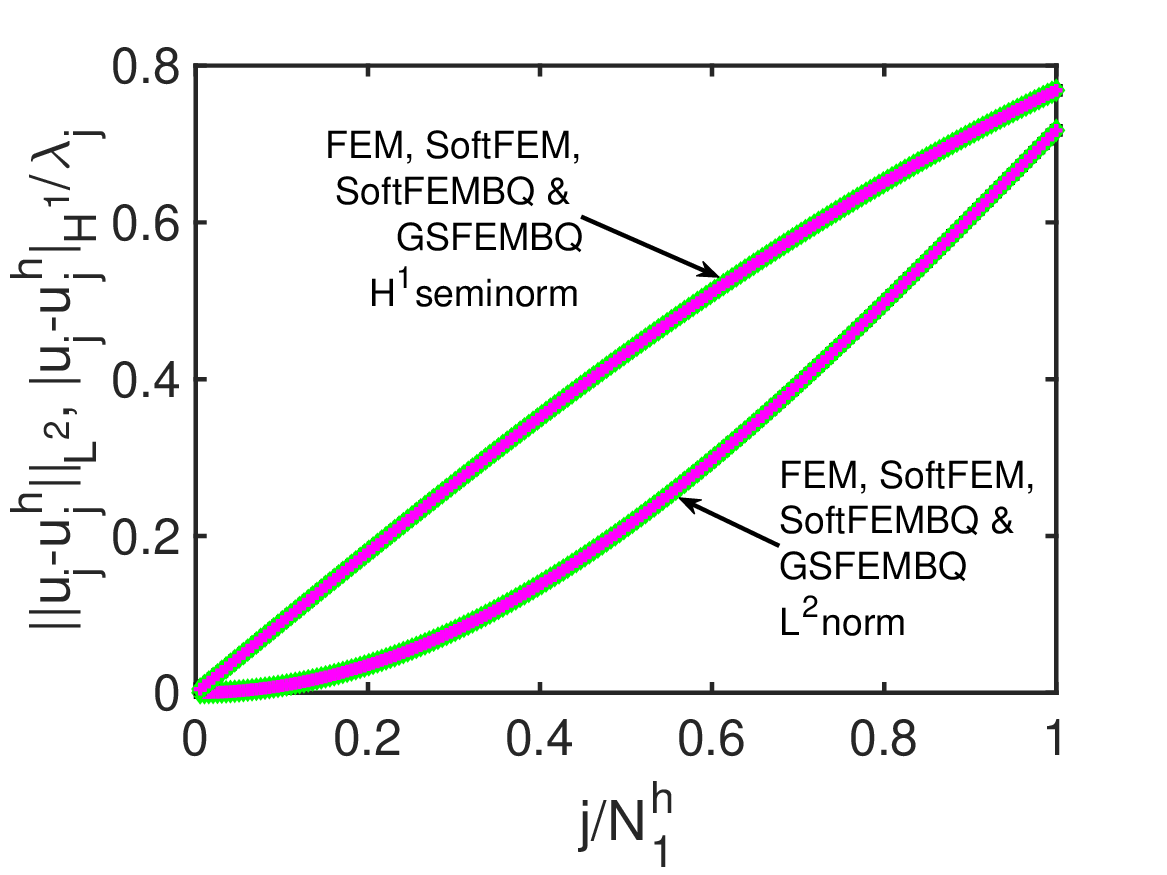}
    \includegraphics[width=0.45\textwidth]{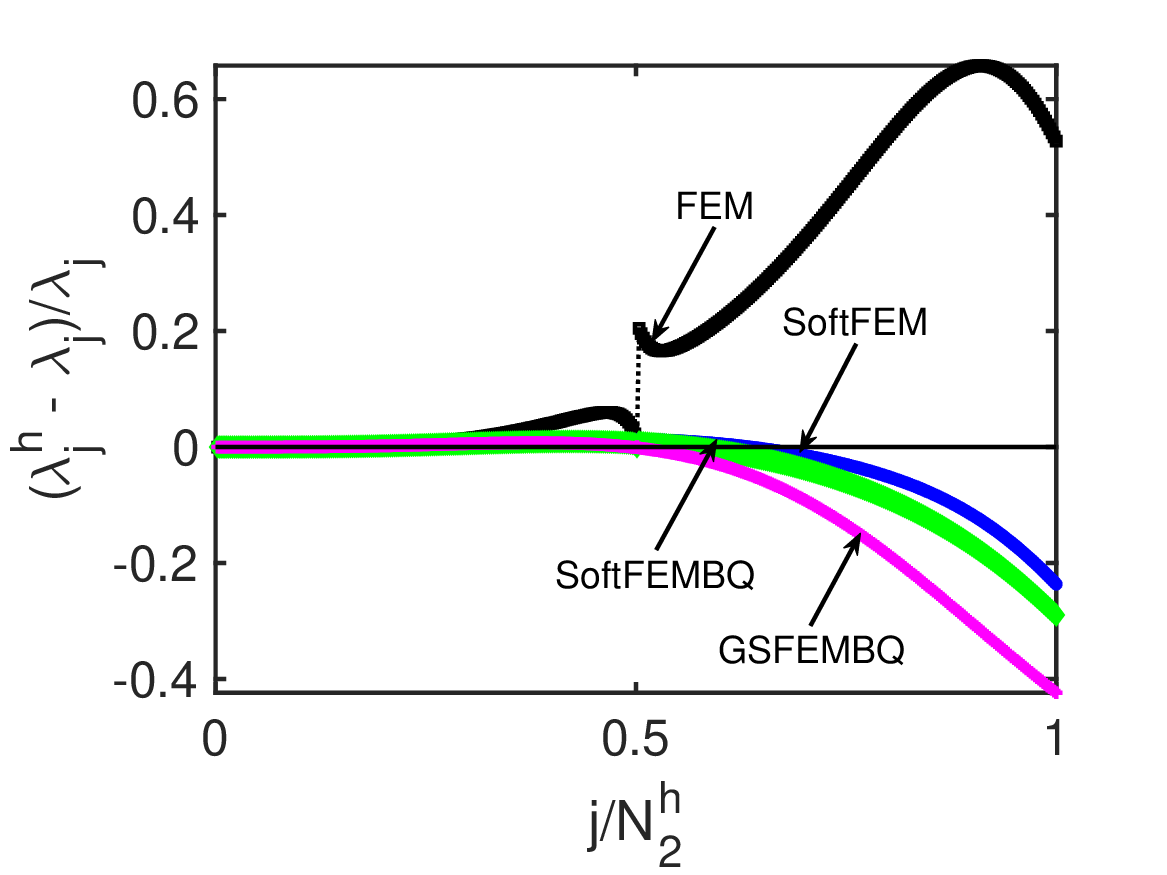}
    \includegraphics[width=0.45\textwidth]{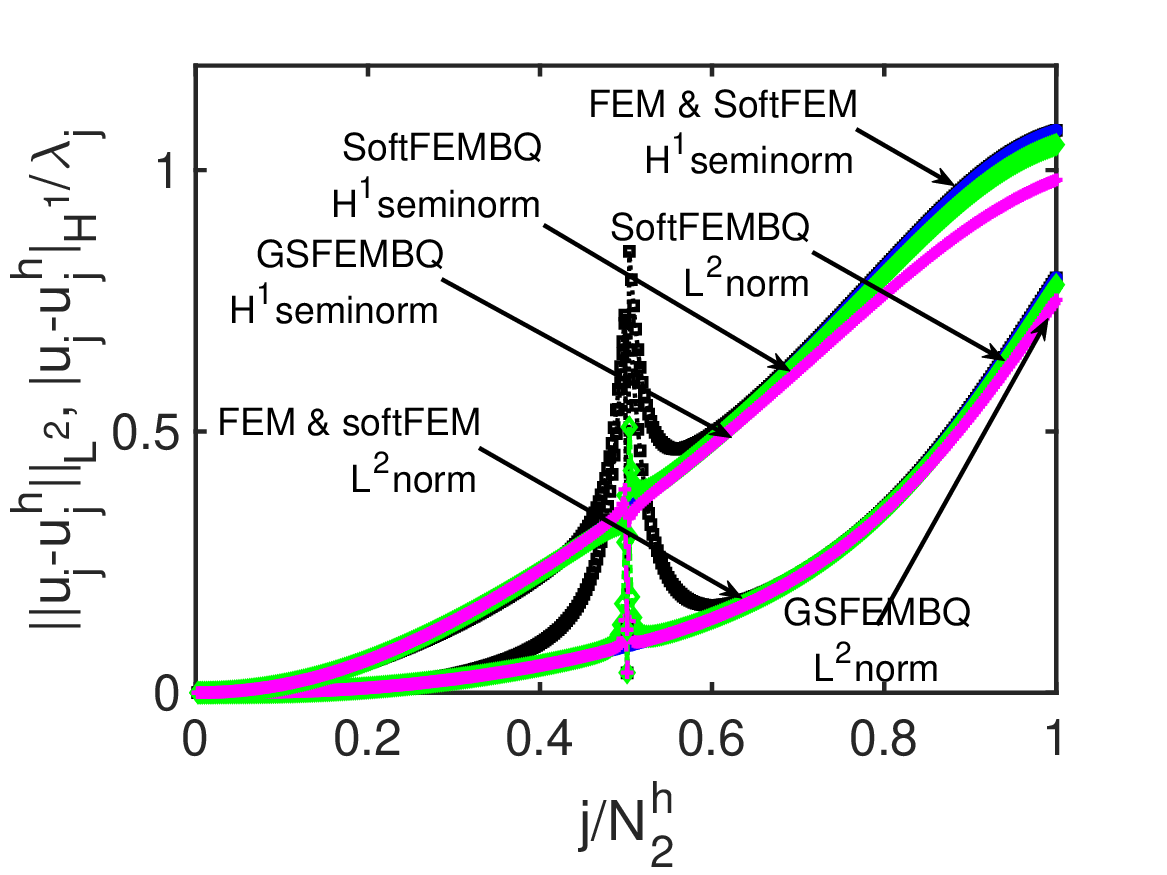}
    \includegraphics[width=0.45\textwidth]{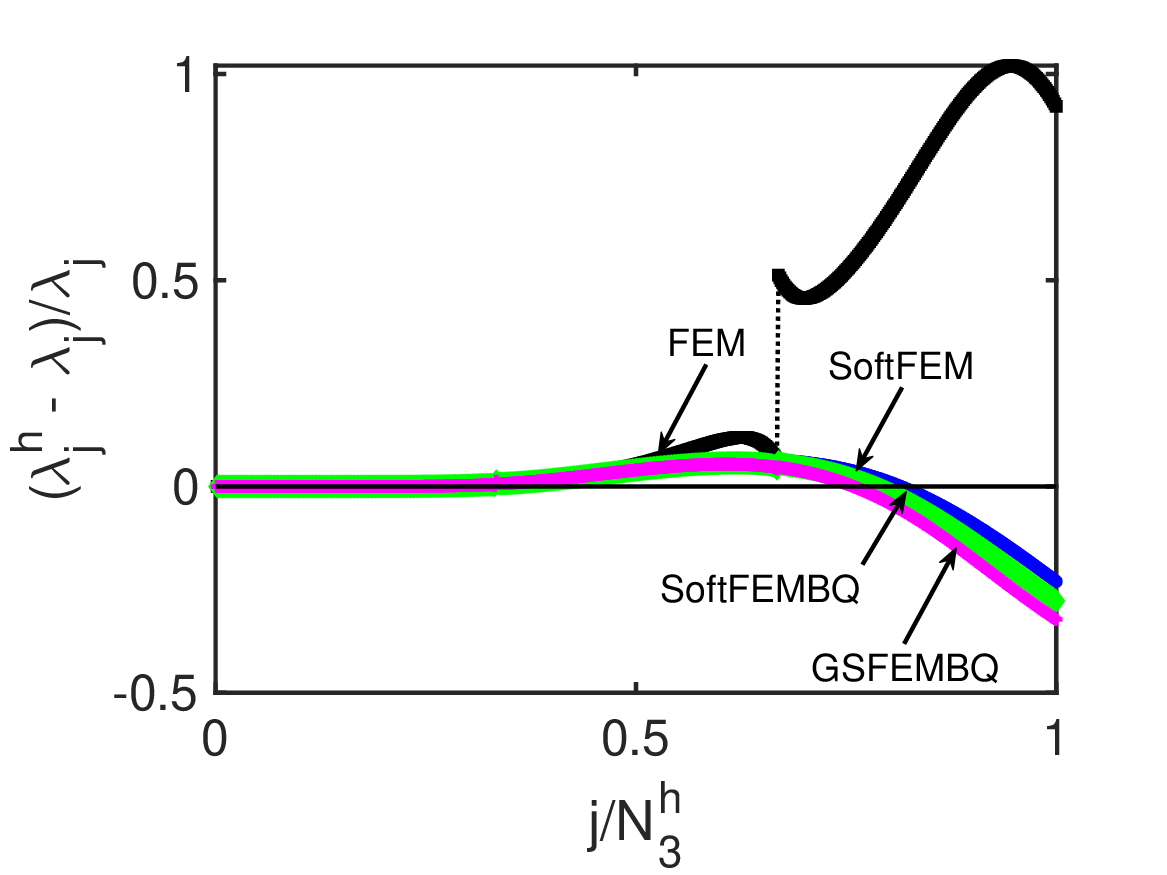}
    \includegraphics[width=0.45\textwidth]{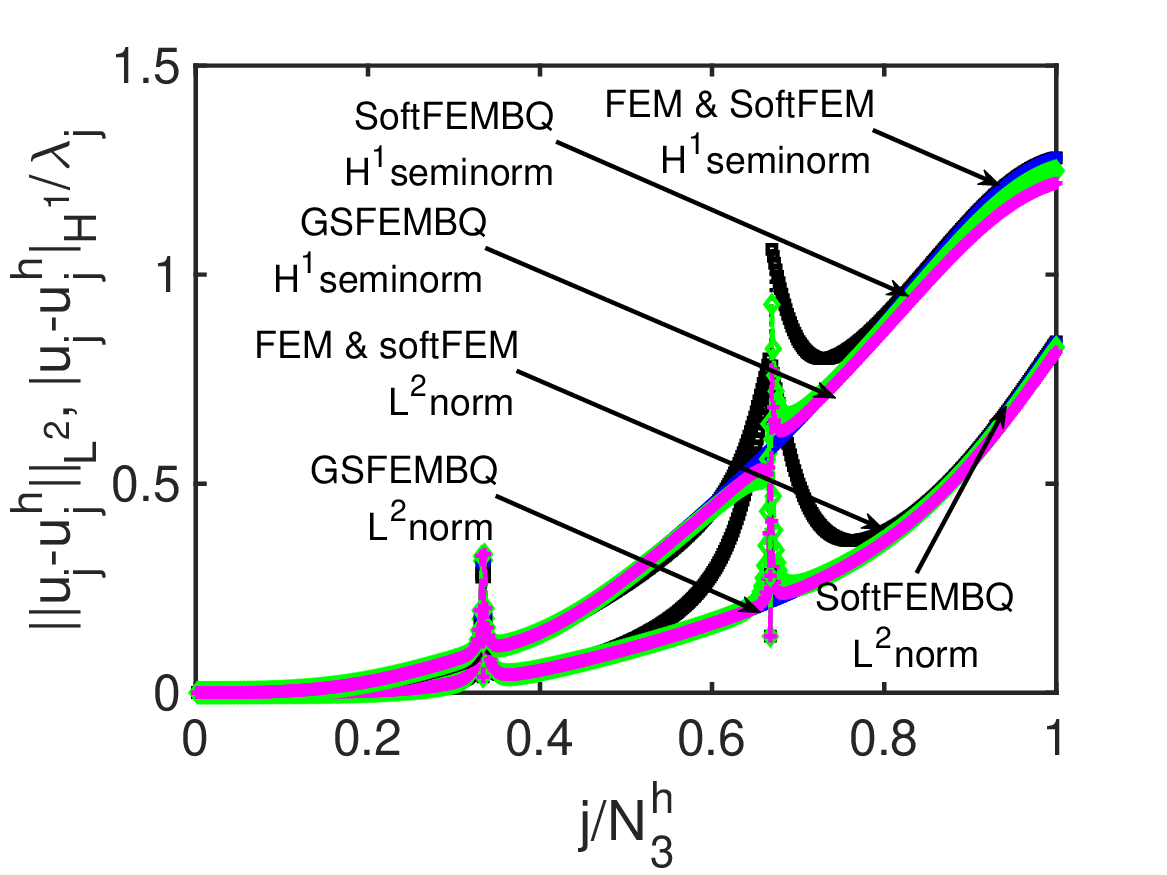}

    \caption{Comparison of eigenvalue errors (left) and eigenfunction errors (right) using different methods for Laplacian eigenvalue problem in 1D with $N^h=200$ uniform elements: Galerkin FEM (black), SoftFEM (blue), SoftFEMBQ (green), and GSFEMBQ (magenta). The parameters are chosen as in Table~\ref{tab:result1} and the blending parameter $\alpha=0.95$. }
    \label{fig:softFEMBQ_comparisionEV&E}
\end{figure}

Figure~\ref{fig:softFEMBQ_comparisionEV&E} shows the comparison of eigenvalue and eigenfunction errors while using Galerkin FEM, SoftFEM, SoftFEMBQ, and GSFEMBQ. Therein, the blending parameter is $\alpha=0.95$. The other parameters are shown in Table ~\ref{tab:result1}.
We observe lower stiffness/condition numbers while using blending quadratures.


With the parameters in Table~\ref{tab:result1}, Table~\ref{tab:softFEMBQ_conditionnumber} shows the advantages of the proposed methods on the reduction of condition numbers compared to the standard Galerkin FEM and SoftFEM. 
Both the SoftFEMBQ and GSFEMBQ methods lead to a further reduction in the condition number when compared to SoftFEM.
Among these methods, we observe that GSFEMBQ achieves the most significant reduction in condition number compared to other methods. 
The blending quadrature plays an important role in stiffness reduction.

\begin{table}[h!]
    \centering
    \footnotesize
    \begin{tabular}{C{0cm}C{0.4cm}C{0.8cm}C{0.8cm}C{0.8cm}C{0.8cm}C{0.8cm}C{0.8cm}C{0.8cm}C{0.8cm}C{0.8cm}C{0.8cm}C{0.8cm}}
    \toprule
    $p$ & $\alpha$ & $\lambda_{max}^h$ & $\lambda_{s,max}^h$ & $\lambda_{gs,max}^h$ & $\lambda_{sq,max}^h$ & $\lambda_{gsq,max}^h$ & $\sigma$ & $\sigma_{s}$ & $\sigma_{gs}$ & $\sigma_{sq}$ & $\sigma_{gsq}$ & $\rho_{gsq}$ \\
    \midrule

    $1$ & $0.00$ & $4.80e5$ & $3.20e5$ & $2.82e5$ &$1.07e5$ & $1.02e5$ & $4.86e4$ & $3.24e4$ & $2.86e4$ & $1.08e4$ & $1.03e4$ & $4.72$\\[1mm]
    & $0.10$ & $4.80e5$ & $3.20e5$ & $2.82e5$ &$1.14e5$ & $1.10e5$ &$4.86e4$ & $3.24e4$ & $2.86e4$ & $1.16e4$ & $1.11e4$ & $4.38$\\[1mm]
    & $0.30$ & $4.80e5$ & $3.20e5$ & $2.82e5$ &$1.33e5$ & $1.26e5$ & $4.86e4$ & $3.24e4$ & $2.86e4$  & $1.35e4$ & $1.28e4$ & $3.80$\\[1mm]
    & $0.50$ & $4.80e5$ & $3.20e5$ & $2.82e5$ &$1.60e5$ & $1.50e5$ & $4.86e4$ & $3.24e4$ & $2.86e4$   &$1.62e4$ & $1.52e4$ & $3.20$\\[1mm]
    & $0.70$ & $4.80e5$ & $3.20e5$ & $2.82e5$ &$2.00e5$ & $1.85e5$ & $4.86e4$ & $3.24e4$ & $2.86e4$ & $2.03e4$ & $1.87e4$ & $2.60$\\[1mm]
    & $0.95$ & $4.80e5$ & $3.20e5$ & $2.82e5$ &$2.91e5$ & $2.59e5$ & $4.86e4$ & $3.24e4$ & $2.86e4$ & $2.95e4$ & $2.63e4$ & $1.85$\\[1mm]
    
    $2$ & $0.78$ & $2.40e6$ & $1.20e6$ & $9.60e5$ &$9.00e5$ & $7.58e5$ & $2.43e5$ & $1.22e5$ & $9.73e4$ & $9.12e4$ & $7.68e4$ & $3.17$\\[1mm]
    & $0.80$ & $2.40e6$ & $1.20e6$ & $9.60e5$ &$9.23e5$ & $7.74e5$ & $2.43e5$ & $1.22e5$ & $9.73e4$ & $9.35e4$ & $7.84e4$ & $3.10$\\[1mm]
    & $0.95$ & $2.40e6$ & $1.20e6$ & $9.60e5$ &$1.12e6$ & $9.06e5$ & $2.43e5$ & $1.22e5$ & $9.73e4$ & $1.13e5$ & $9.18e4$ & $2.66$\\[1mm]
    
    $3$ & $0.94$ & $6.80e6$ & $2.73e6$ & $2.55e6$ &$2.53e6$ & $2.37e6$ & $6.89e5$ & $2.76e5$ & $2.58e5$ & $2.56e5$ & $2.40e5$ & $2.87$\\[1mm]
    & $0.95$ & $6.80e6$ & $2.73e6$ & $2.55e6$ &$2.56e6$ & $2.40e6$ & $6.89e5$ & $2.76e5$ & $2.58e5$ & $2.59e5$ & $2.43e5$ & $2.84$\\
    
    \bottomrule
    \end{tabular}
    \caption{Comparison of key metrics - maximum eigenvalue, condition number, and stiffness reduction ratios - among Galerkin FEM, SoftFEM, GSFEM, SoftFEMBQ, and GSFEMBQ methods with $p=1, 2, 3$ and a mesh of $N^h=200$. The parameters are in Table~\ref{tab:result1}. }
    \label{tab:softFEMBQ_conditionnumber}
\end{table}

We now consider a case with blending parameters discovered in Ainsworth et al. \cite{Ainsworth10}. This strategy, which entails the variation of $\eta_{K}$ and $\eta_{M}$, yields a distinct set of numerical outcomes. However, the condition number and eigenfunction error trends remain largely similar to previous findings. In more details, the blending parameter $\alpha = \frac{1}{p+1}$, and the soft parameter $\eta_{K}$ and $\eta_{M}$ as in Table~\ref{tab:result2}. We observe that the eigenfunction errors for methods with blended quadratures are lower than those in FEM and SoftFEM, particularly in the higher end of the spectrum; see Figure~\ref{fig:softFEMBQ_comparisionEV&E2}.
Furthermore, using the parameters in Table~\ref{tab:result2}, Table \ref{tab:softFEMBQ_conditionnumber2} shows similar stiffness reduction compared to the previous case when using parameters in Table~\ref{tab:result1}.


\begin{table}[h!]
    \centering
    \begin{tabular}{c|c|c|c}
    \toprule
    $p$ & $\alpha$ & Selected $\eta_{K}$ & Selected $\eta_{M}$ \\
    \midrule
    $1$ & $\frac{1}{2}$ & $\frac{1}{8}$ & $\frac{1}{96}$ \\[1mm]
    $2$ & $\frac{1}{3}$ & $\frac{1}{32}$ & $\frac{1}{3840}$\\[1mm]
    $3$ & $\frac{1}{4}$ & $\frac{1}{72}$ & $\frac{1}{84480}$\\
    \bottomrule
    \end{tabular}
    \caption{Softness parameter $\eta_{K}$ and $\eta_{M}$ with $\alpha = \frac{1}{p+1}$, for $p=1, 2, 3$.}
    \label{tab:result2}
\end{table}

\begin{figure}[h!]
    \centering

    \includegraphics[width=0.45\textwidth]{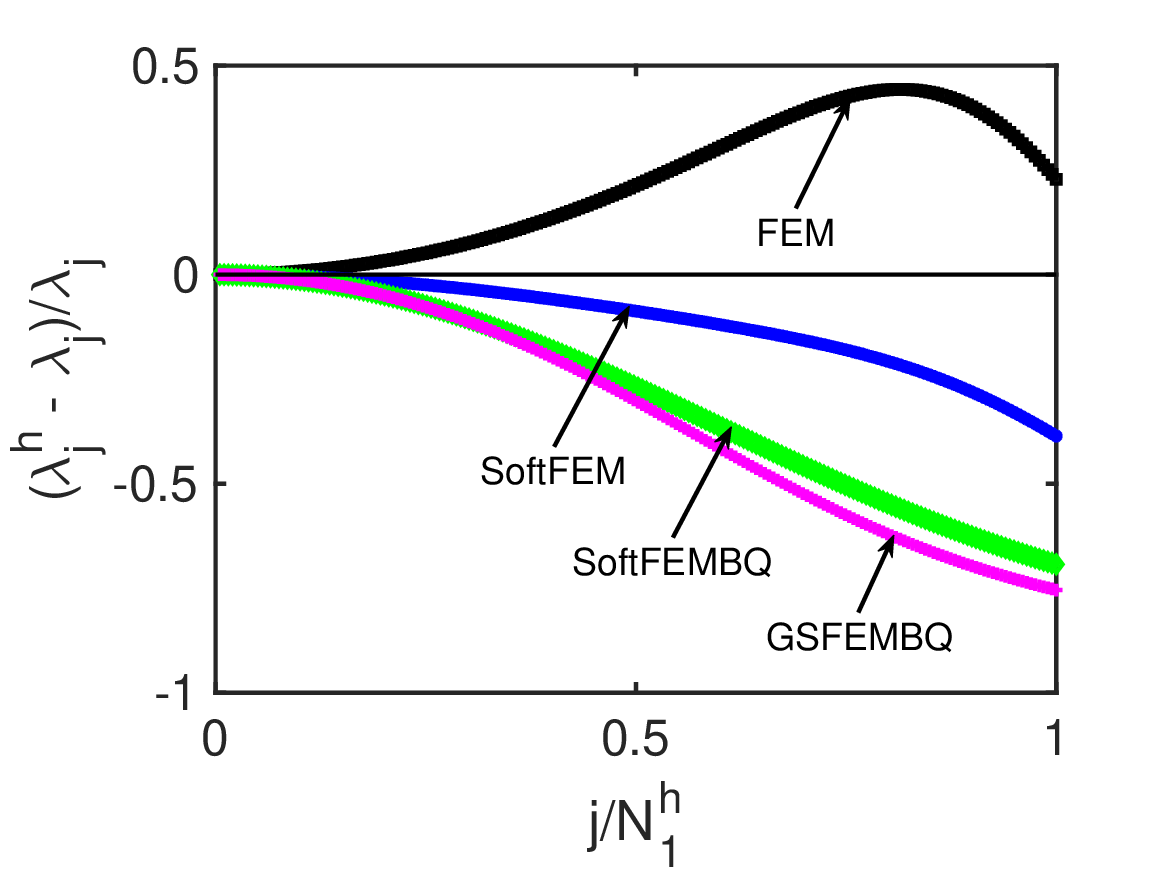}
    \includegraphics[width=0.45\textwidth]{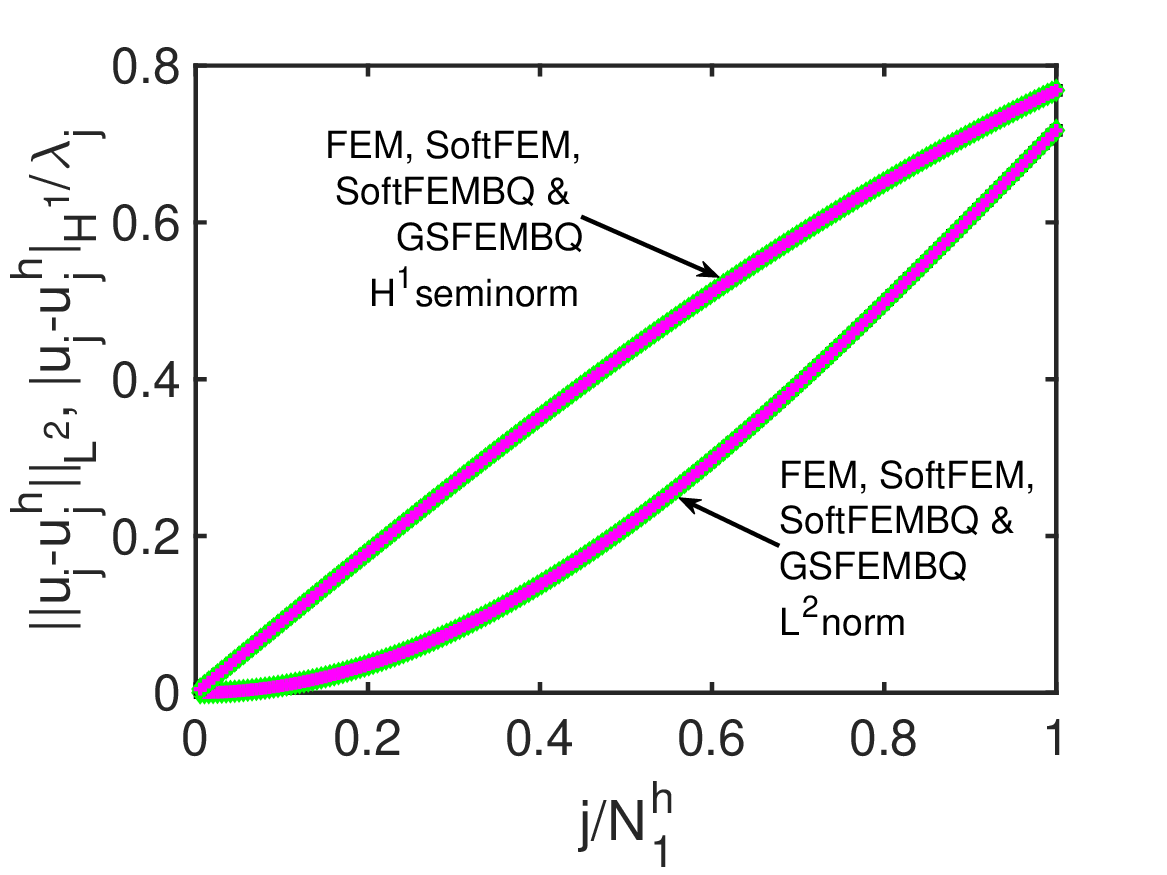}
    \includegraphics[width=0.45\textwidth]{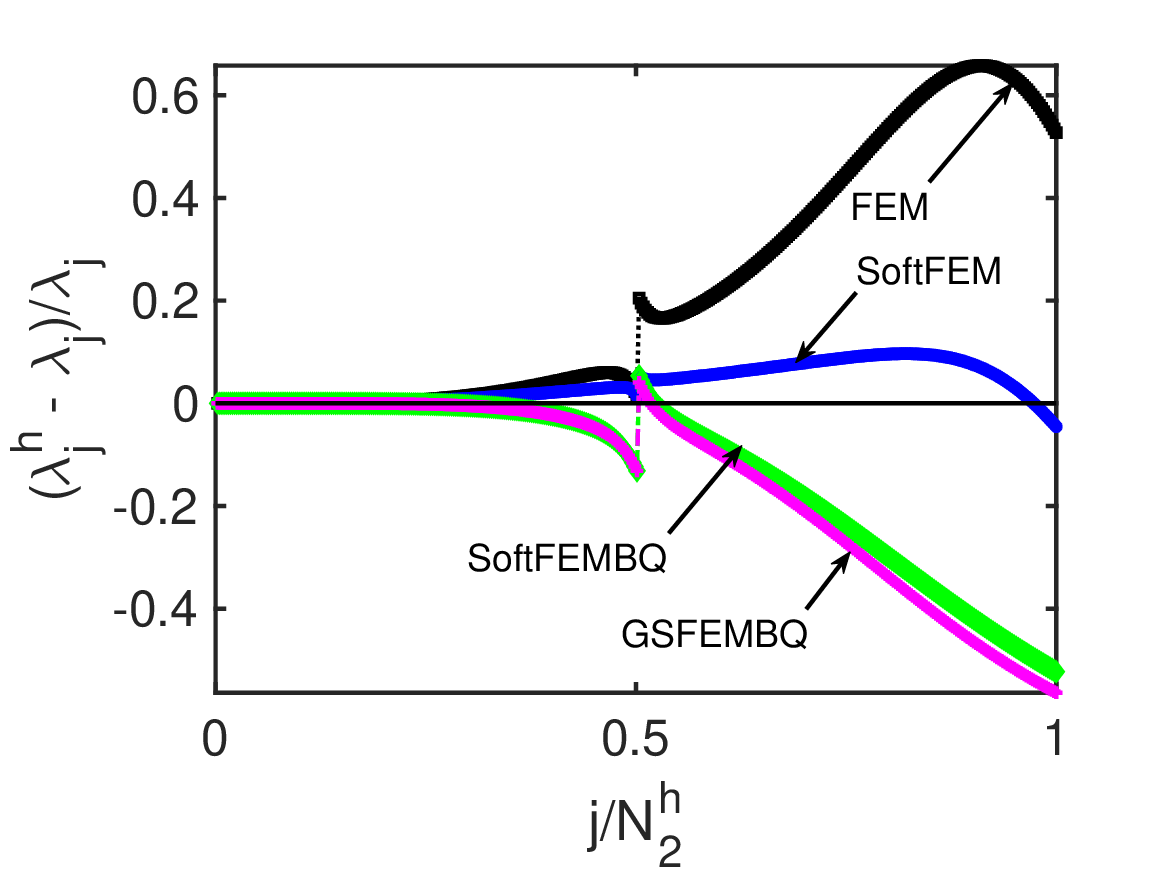}
    \includegraphics[width=0.45\textwidth]{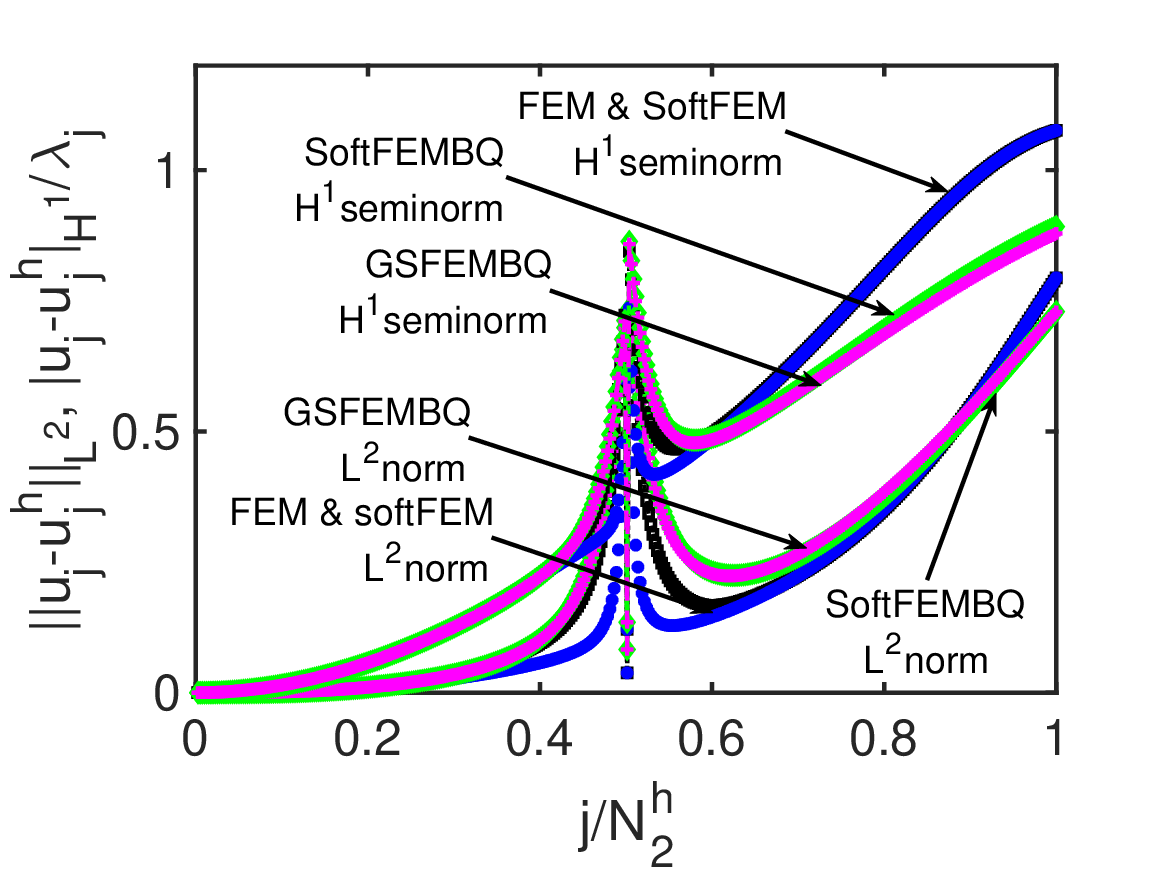}
    \includegraphics[width=0.45\textwidth]{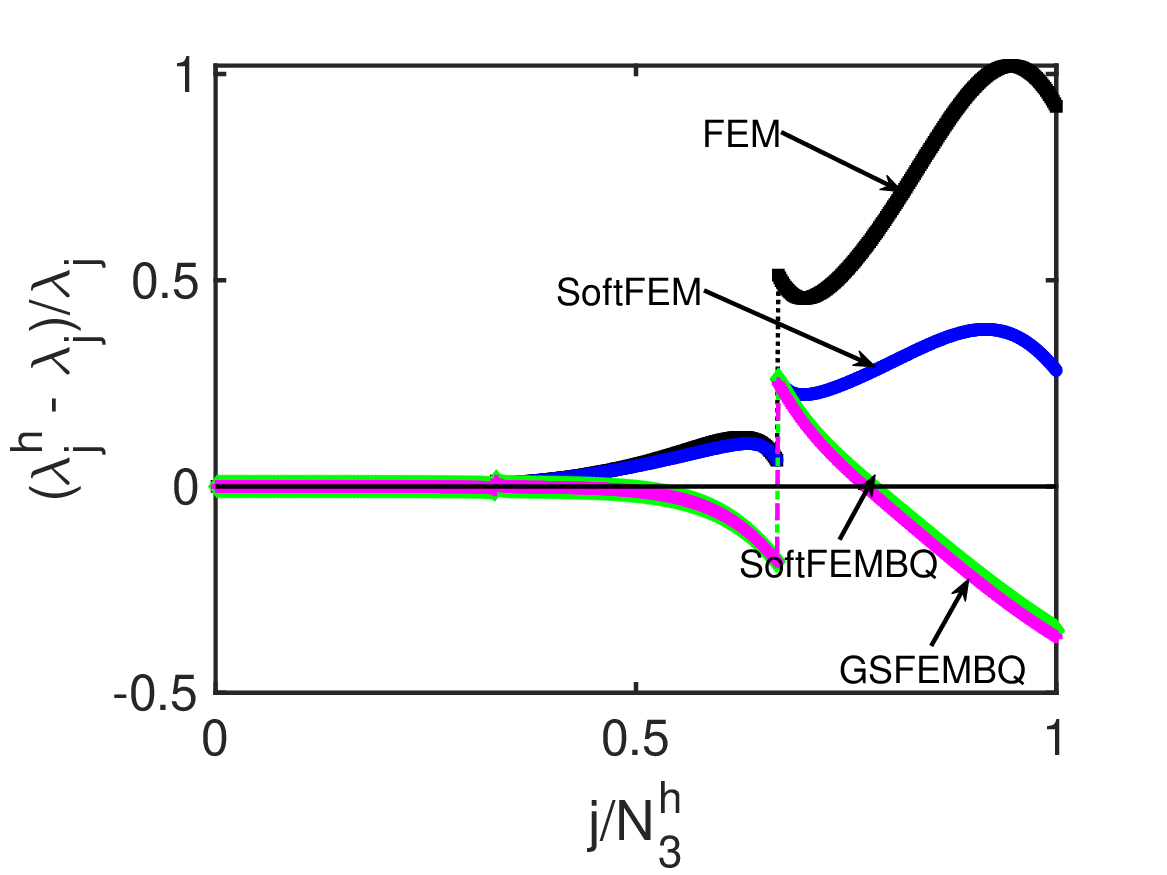}
    \includegraphics[width=0.45\textwidth]{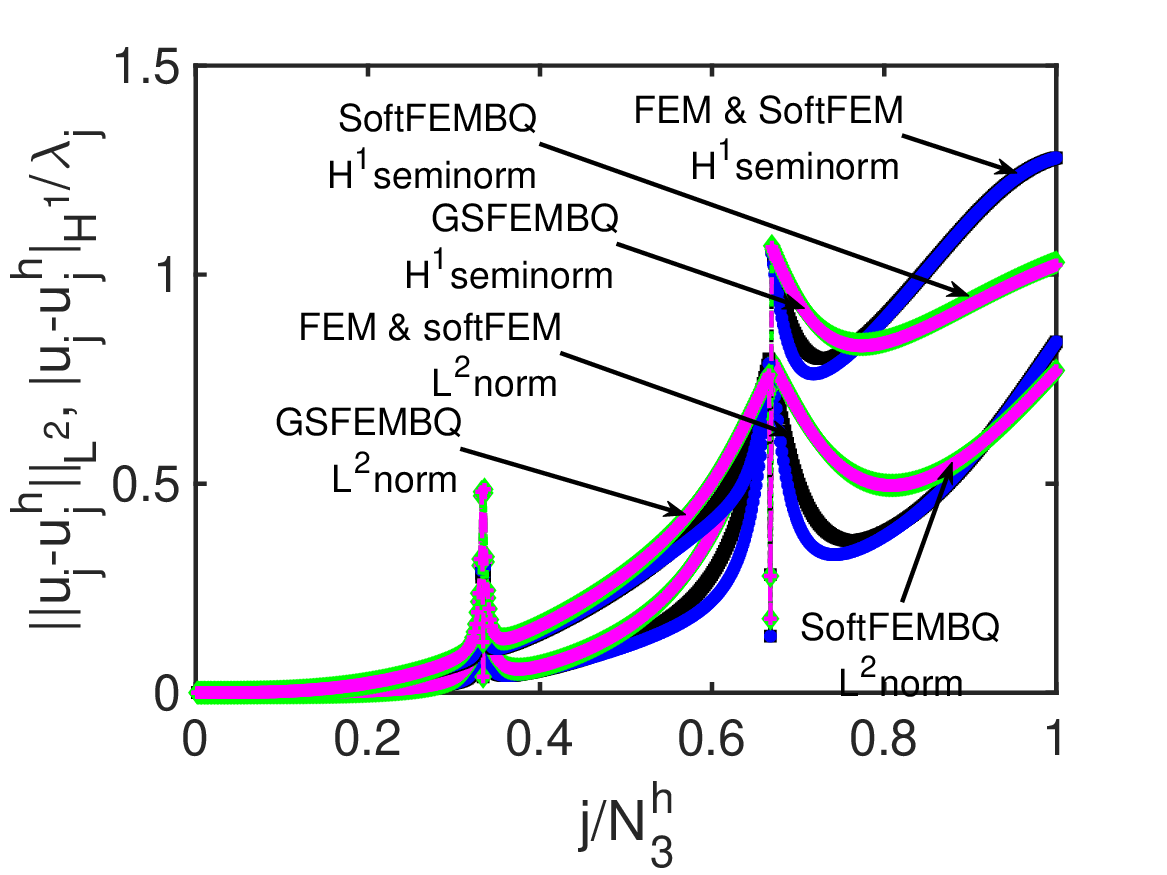}

    \caption{Comparison of eigenvalue errors (left) and eigenfunction errors (right) using different methods for Laplacian eigenvalue problem in 1D with $N^h=200$ uniform elements: Galerkin FEM (black), SoftFEM (blue), SoftFEMBQ (green), and GSFEMBQ (magenta). The parameters are chosen as in Table~\ref{tab:result2}. } 
\label{fig:softFEMBQ_comparisionEV&E2}
\end{figure}



\begin{table}[h!]
    \centering
    \footnotesize
    \begin{tabular}{C{0cm}C{0.4cm}C{0.8cm}C{0.8cm}C{0.8cm}C{0.8cm}C{0.8cm}C{0.8cm}C{0.8cm}C{0.8cm}C{0.8cm}C{0.8cm}C{0.8cm}}
    \toprule
    $p$ & $\alpha$ & $\lambda_{max}^h$ & $\lambda_{s,max}^h$ & $\lambda_{gs,max}^h$ & $\lambda_{sq,max}^h$ & $\lambda_{gsq,max}^h$ & $\sigma$ & $\sigma_{s}$ & $\sigma_{gs}$ & $\sigma_{sq}$ & $\sigma_{gsq}$ & $\rho_{gsq}$ \\
    \midrule

    $1$ & $0.50$ & $4.80e5$ & $2.40e5$ & $1.60e5$ &$1.20e5$ & $9.60e4$ & $4.86e4$ & $2.43e4$ & $1.62e4$ & $1.22e4$ & $9.73e3$ & $5.00$\\[1mm]

    $2$ & $0.33$ & $2.40e6$ & $1.50e6$ &$1.26e6$ &$7.50e5$ & $6.86e5$ & $2.43e5$ & $1.52e5$ & $1.28e5$ & $7.60e4$ & $6.95e4$ & $3.50$\\[1mm]
    
    $3$ & $0.25$ & $6.80e6$ & $4.54e6$ &$4.33e6$ &$2.30e6$ & $2.25e6$ & $6.89e5$ &$4.60e5$ & $4.39e5$ & $2.33e5$ & $2.28e5$ &$3.02$\\
    
    \bottomrule
    \end{tabular}
    \caption{Comparison of key metrics - maximum eigenvalue, condition number, and stiffness reduction ratios - among Galerkin FEM, SoftFEM, GSFEM, SoftFEMBQ, and GSFEMBQ methods with $p=1, 2, 3$ and a mesh of $N^h=200$. The parameters are in Table~\ref{tab:result2}.}
    \label{tab:softFEMBQ_conditionnumber2}
\end{table}



    
    

Tables~\ref{tab:softFEMBQ_stiffness reduction ratios1} and~\ref{tab:softFEMBQ_stiffness reduction ratios2} show the comparison of stiffness reduction ratios for different methods with $p=1, 2, 3$ and a uniform mesh of size $N^h=200$. The parameters used in these experiments are in Tables~\ref{tab:result1} ($\alpha=0.95$) and~\ref{tab:result2}, respectively. In both Tables, we observed that our new methods exhibit higher stiffness reduction ratios than the SoftFEM, indicating lower condition numbers. In Table~\ref{tab:softFEMBQ_stiffness reduction ratios1}, we observe that, for all methods, as the order of polynomials increases, the stiffness reduction ratio also increases. Whereas in Table~\ref{tab:softFEMBQ_stiffness reduction ratios2}, we observe a decrease but reduction ratios are larger than SoftFEM. Notably, the stiffness reduction ratio of SoftFEMBQ is almost twice that of SoftFEM.
The choices of the parameters have a major impact on the stiffness reduction of the discretized systems.

\begin{table}[h!]
    \centering
    \footnotesize
    \begin{tabular}{ccccc}
    \toprule
    $p$ & $\rho_{s}$ & $\rho_{gs}$ & $\rho_{sq}$ & $\rho_{gsq}$ \\
    \midrule

    $1$ & $1.50$ & $1.70$ & $1.65$ & $1.85$\\[1mm]

    $2$ & $2.00$ & $2.51$ & $2.15$ &$2.66$\\[1mm]
    
    $3$ & $2.50$ & $2.67$ & $2.66$ &$2.84$\\
    
    \bottomrule
    \end{tabular}
    \caption{Comparison of stiffness reduction ratios for SoftFEM, GSFEM, SoftFEMBQ, and GSFEMBQ methods with  $p=1, 2, 3$ and a mesh of size $N^h=200$. Parameters are in Table~\ref{tab:result1} and the blending parameter $\alpha=0.95$.}
    \label{tab:softFEMBQ_stiffness reduction ratios1}
\end{table}

\begin{table}[h!]
    \centering
    \footnotesize
    \begin{tabular}{ccccc}
    \toprule
    $p$ & $\rho_{s}$ & $\rho_{gs}$ & $\rho_{sq}$ & $\rho_{gsq}$ \\
    \midrule

    $1$ & $2.00$ & $3.00$ & $4.00$ & $5.00$\\[1mm]

    $2$ & $1.60$ & $1.90$ & $3.20$ &$3.50$\\[1mm]
    
    $3$ & $1.50$ & $1.57$ & $2.95$ &$3.02$\\
    
    \bottomrule
    \end{tabular}
    \caption{
    Comparison of stiffness reduction ratios for SoftFEM, GSFEM, SoftFEMBQ, and GSFEMBQ methods with  $p=1, 2, 3$ and a mesh of size $N^h=200$. Parameters are in Table~\ref{tab:result2}.}
    \label{tab:softFEMBQ_stiffness reduction ratios2}
\end{table}

\subsection{An Example with Variable Diffusion Coefficient}
Lastly, in this subsection, we consider a study for the problem \eqref{eq:pde} with variable diffusion coefficient. 
For simplicity, we focus on the one-dimensional case with $\Omega=(0,1)$ and a diffusion coefficient $\kappa=e^{x+x^2}$.
In this case, the exact eigenvalues are not available. 
In order to quantify the errors, one often uses fine mesh with high-order elements to generate highly accurate eigenvalues and use them as reference eigenvalues.
Herein, we use quartic elements with a fine mesh of size $N^h=1000$ to generate the reference eigenvalues. 
Figure \ref{fig:dc_1} shows the comparison of the relative eigenvalue errors when using different methods with linear, quadratic, and cubic elements.   
Table \ref{tab:dc_1_1} shows the comparison of eigenvalues, condition numbers, and stiffness reduction ratios. We observe similar performance of the methods studied in previous subsections.

\begin{figure}
    \centering
    \hspace{-0.5cm}
    \includegraphics[width=0.32\textwidth]{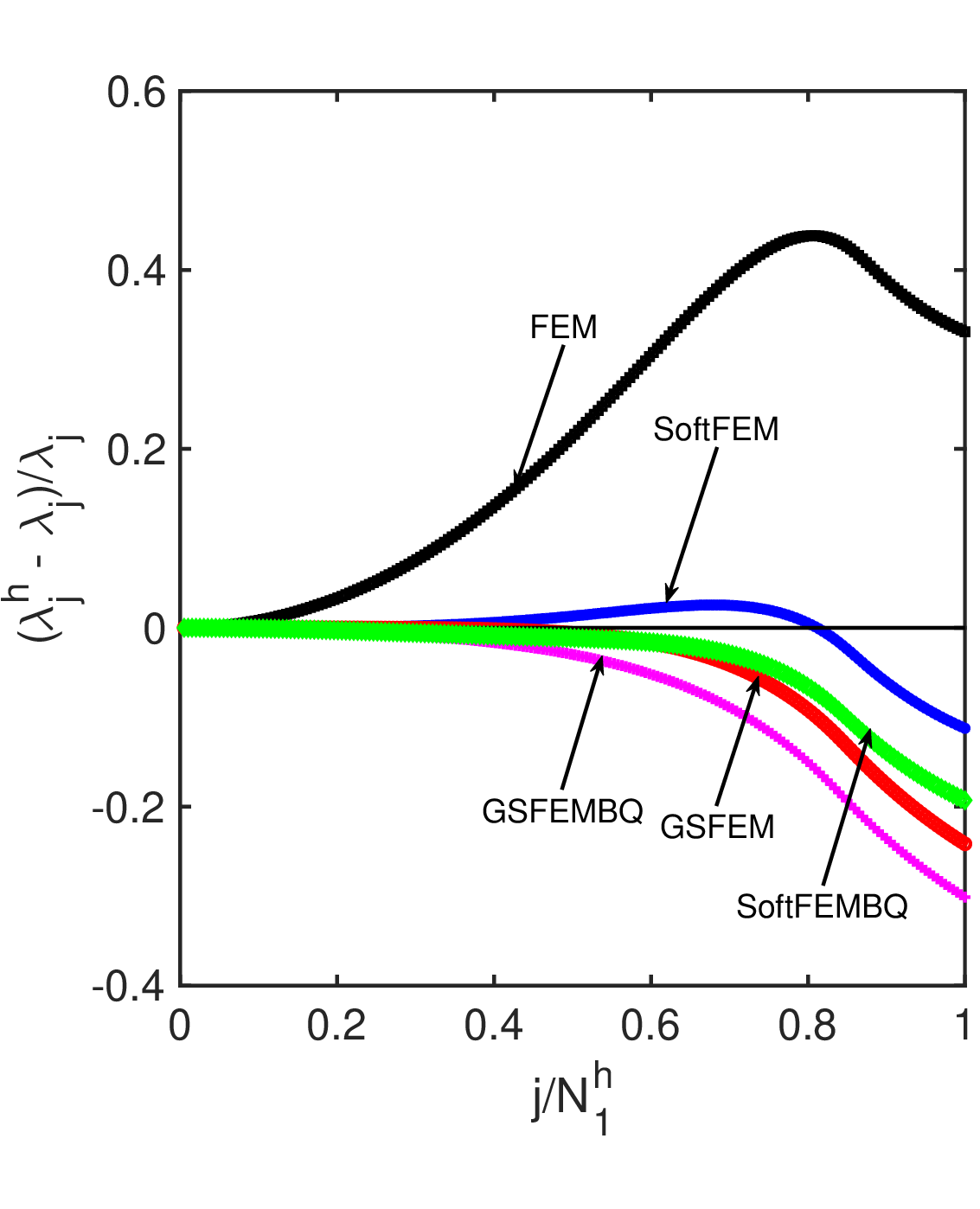}
    \includegraphics[width=0.32\textwidth]{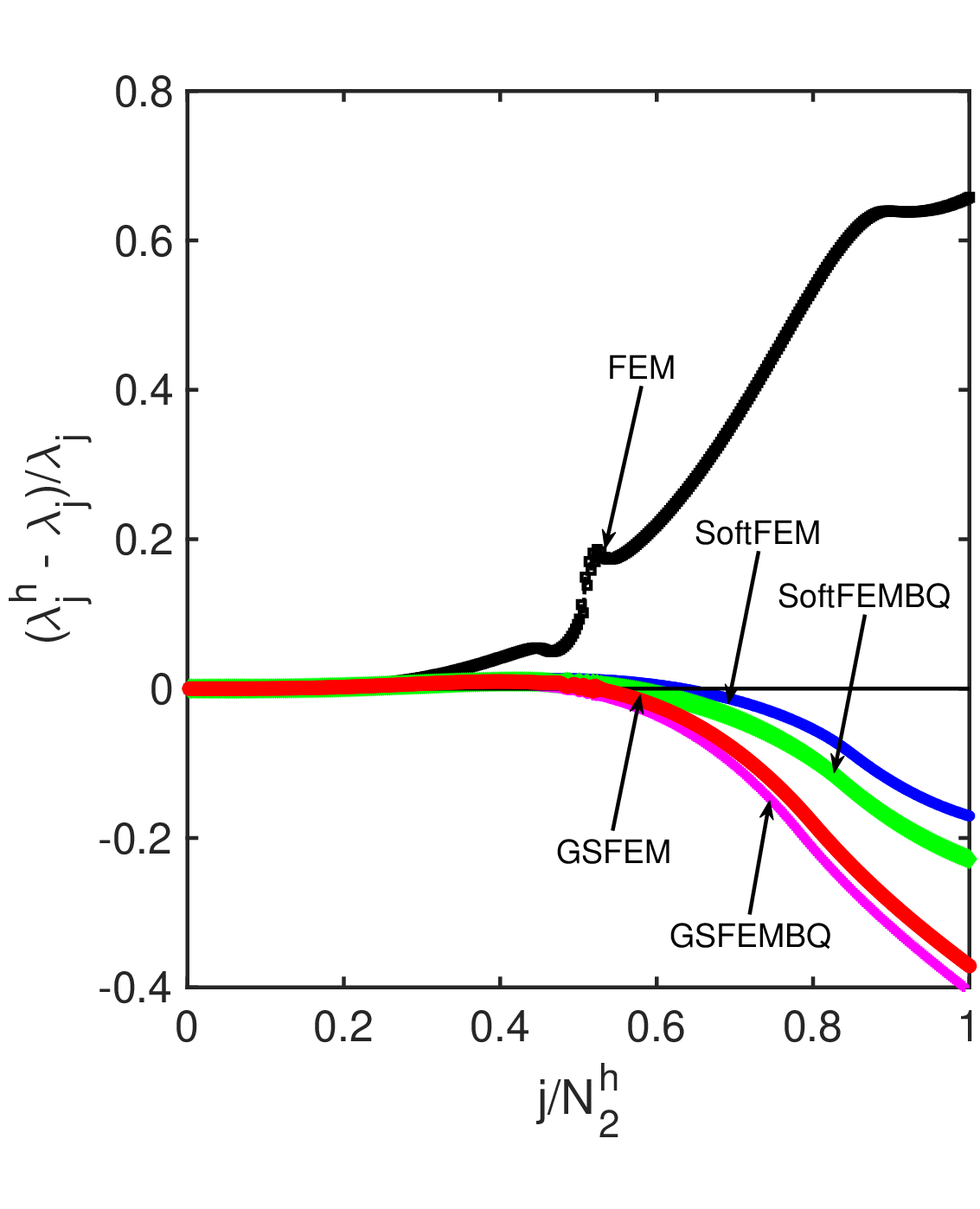}
    \includegraphics[width=0.32\textwidth]{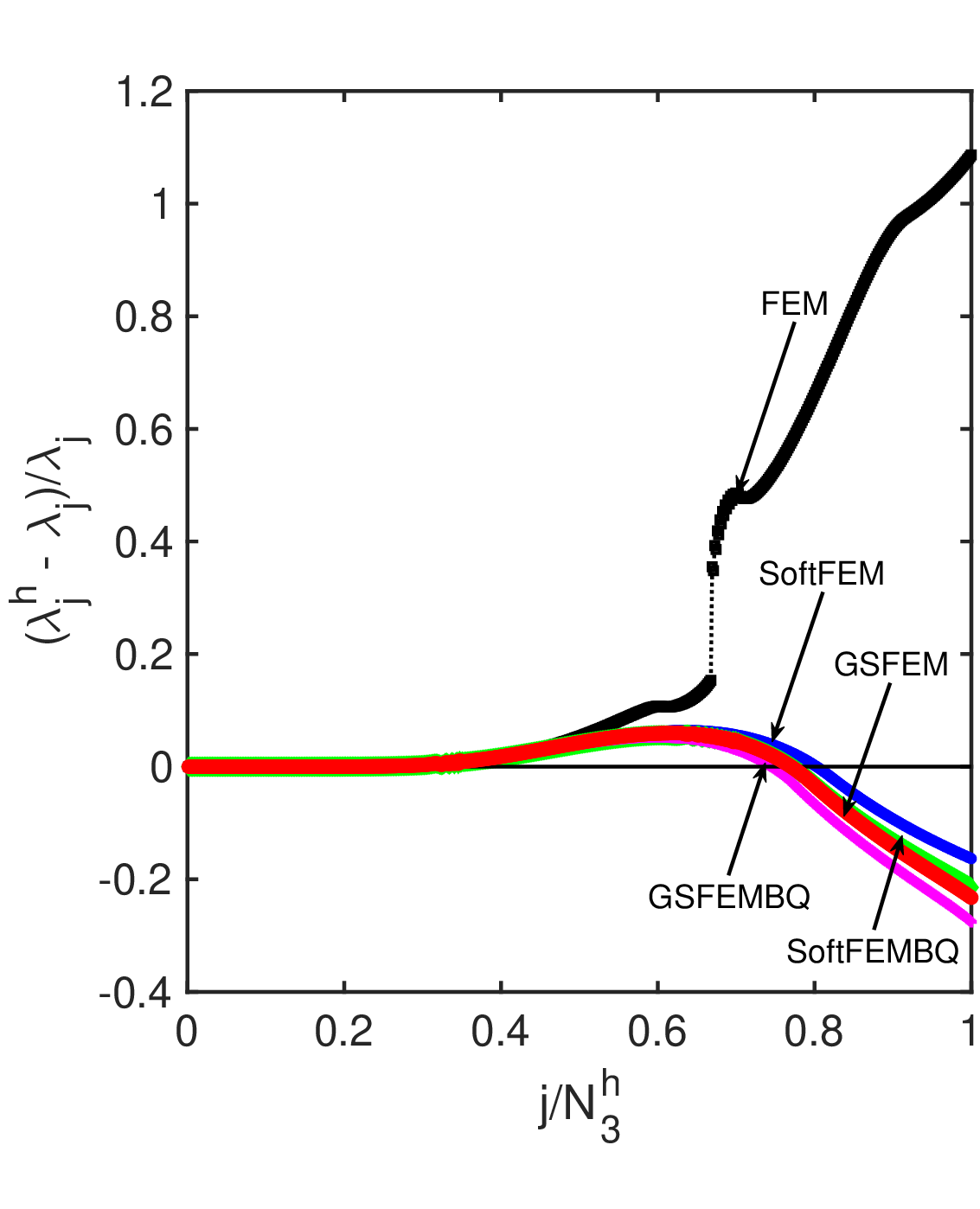}

    \caption{
    Comparison of eigenvalue errors using different methods for~\eqref{eq:pde} with $\kappa=e^{x+x^2}$ and $N^h=200$ uniform elements: Galerkin FEM (black), SoftFEM (blue), SoftFEMBQ (green), and GSFEMBQ (magenta). The parameters are chosen as in Table~\ref{tab:result1} and the blending parameter $\alpha=0.95$. }
    \label{fig:dc_1}
\end{figure}

\begin{table}[h!]
    \centering
    \footnotesize
    \begin{tabular}{C{0cm}C{0.8cm}C{0.8cm}C{0.8cm}C{0.8cm}C{0.8cm}C{0.8cm}C{0.8cm}C{0.8cm}C{0.8cm}C{0.8cm}C{0.8cm}C{0.8cm}}
    \toprule
    $p$ & $\lambda_{min}^h$ & $\lambda_{max}^h$ & $\lambda_{s,max}^h$ & $\lambda_{gs,max}^h$ & $\lambda_{sq,max}^h$ & $\lambda_{gsq,max}^h$ & $\sigma$ & $\sigma_{s}$ & $\sigma_{gs}$ & $\sigma_{sq}$ & $\sigma_{gsq}$ & $\rho_{gsq}$ \\
    \midrule

    $1$ & $11.05$ & $6.14e5$ & $4.09e5$ & $3.50e5$ &$3.72e5$ & $3.22e5$ & $5.55e4$ & $3.70e4$ & $3.16e4$ & $3.37e4$ & $2.92e4$ & $1.90$\\[1mm]

    $2$ & $11.05$ & $3.07e6$ & $1.54e6$ &$1.17e6$ &$1.43e6$ & $1.10e6$ & $2.78e5$ & $1.39e5$ & $1.05e5$ & $1.29e5$ & $9.98e4$ & $2.79$\\[1mm]
    
    $3$ & $11.05$ & $8.72e6$ & $3.50e6$ &$3.21e6$ &$3.28e6$ & $3.03e6$ & $7.89e5$ &$3.16e5$ & $2.90e5$ & $2.97e5$ & $2.74e5$ &$2.88$\\
    
    \bottomrule
    \end{tabular}
    \caption{Comparison of key metrics - maximum eigenvalue, condition number, and stiffness reduction ratios - among Galerkin FEM, SoftFEM, GSFEM, SoftFEMBQ, and GSFEMBQ methods with $p=1, 2, 3$ and a mesh of $N^h=200$ for~\eqref{eq:pde} with $\kappa=e^{x+x^2}$ in 1D. The parameters are in Table~\ref{tab:result1} and a blending parameter $\alpha=0.95$.}
    \label{tab:dc_1_1}
\end{table}




\section{Concluding Remarks} 
\label{sec:conclusion} 

This study generalizes the SoftFEM with two main ideas focusing on the mass bilinear form, aiming at further reducing stiffness and improving accuracy, particularly in the higher spectral range. 
The generalization is twofold: adding an auxiliary bilinear form for gradient jumps and using blended quadratures. 
Both methods improve the spectral accuracy and reduce the condition numbers. 
There are several lines for future work. 
Firstly, the analytical description of the matrix eigenvalue problems arising from higher-order elements generalizes the linear elements studied in this paper.
This sharp description will contribute to the optimal choice of the parameters.
Secondly, nonlinear eigenvalue problems arise in various engineering applications; therefore, applying the proposed method to nonlinear eigenvalue problems may prove beneficial. 
Thirdly, the idea can be generalized to soft isogeometric analysis \cite{deng2023softiga}.
Last, the methods can be applied to explicit time discretization of non-stationary differential problems to increase the stability region of the explicit time marching schemes.




\bibliographystyle{plain}
\bibliography{ref}





\end{document}

\endinput